\theoremstyle{plain}
\newtheorem{thm}{Theorem}[section]
\newtheorem{lem}[thm]{Lemma}
\newtheorem{cor}[thm]{Corollary}
\newtheorem{prop}[thm]{Proposition}
\theoremstyle{definition}
\newtheorem{defn}[thm]{Definition}
\newtheorem{rmk}[thm]{Remark}
\newtheorem{ex}[thm]{Example}
\newtheorem{que}[thm]{Question}
\newcommand{\OG}{\mathrm{OG}}
\newcommand{\Gr}{\mathrm{Gr}}
\newcommand{\bC}{\mathbb{C}}
\newcommand{\bF}{\mathbb{F}}
\newcommand{\bK}{\mathbb{K}}
\newcommand{\bP}{\mathbb{P}}
\newcommand{\bS}{\mathbb{S}}
\newcommand{\bT}{\mathbb{T}}
\newcommand{\cA}{\mathcal{A}}
\newcommand{\cB}{\mathcal{B}}
\newcommand{\cC}{\mathcal{C}}
\newcommand{\cD}{\mathcal{D}}
\newcommand{\cT}{\mathcal{T}}
\newcommand{\ground}{[n]\cup [n]^*}
\newcommand{\supp}{\mathrm{supp}}
\newcommand{\ul}[1]{\underline{#1}}
\newcommand{\ol}[1]{\overline{#1}}
\newcommand{\symdiff}{\triangle}
\newcommand{\pf}{\mathrm{pf}}
\newcommand{\bilin}[2]{\langle #1, #2 \rangle}
\newcommand{\skewpair}[1]{\{#1, #1^*\}}
\newcommand{\skewpairs}[2]{\{#1, #1^*, #2, #2^*\}}
\newcommand{\eqcls}[1]{\left[ #1 \right]}
\newcommand{\WtoGP}{\Phi_{\mathrm{W}\to\mathrm{GP}}}
\newcommand{\GPtoO}{\Phi_{\mathrm{GP}\to\mathrm{O}}}
\newcommand{\OtoW}{\Phi_{\mathrm{O}\to\mathrm{W}}}
\newcommand{\WtoO}{\Phi_{\mathrm{W}\to\mathrm{O}}}
\newcommand{\indicator}[1]{\mathbf{1}(#1)}
\newcommand{\abs}[1]{\left| #1 \right|}
\newcommand{\smaller}[2]{| #1 < #2 |}
\newcommand{\smallereq}[2]{| #1 \le #2 |}
\newcommand{\hyper}{S}
\newcommand{\hypo}{S'}
\newcommand{\FC}{\mathrm{FC}}
\title{Grassmann--Pl\"ucker functions for orthogonal matroids}
\author[1]{Changxin Ding\thanks{\href{mailto:cding66@gatech.edu}{cding66@gatech.edu}. Changxin Ding was supported by the AMS-Simons Travel Grant.}}
\author[1]{Donggyu Kim\thanks{\href{mailto:donggyu@gatech.edu}{donggyu@gatech.edu}. Donggyu Kim was supported by the AMS-Simons Travel Grant.}}
\affil[1]{School of Mathematics, Georgia Institute of Technology, USA}
\begin{document}

\maketitle

\begin{abstract}
    We present a new cryptomorphic definition of orthogonal matroids with coefficients using Grassmann--Pl\"ucker functions.
    The equivalence is motivated by Cayley's identities expressing principal and almost-principal minors of a skew-symmetric matrix in terms of its Pfaffians. As a corollary of the new cryptomorphism, we deduce that each component of the orthogonal Grassmannian is parameterized by certain part of the Pl\"ucker coordinates.
\end{abstract}

\section{Introduction}

A matroid is a structure that abstracts the notion of linear independence in vector spaces. For matroids, there are several not obviously equivalent definitions, which are called \emph{cryptomorphisms}. For matroids with additional structures, e.g. valuated matroids and oriented matroids, their cryptomorphic characterizations do not follow directly from those of matroids. Recently, Baker and Bowler \cite{BB} introduced matroids over \emph{tracts} and studied the cryptomorphisms, which unify cryptomorphisms of (ordinary) matroids, valuated matroids, and oriented matroids. The key idea is to use the Pl\"{u}cker embedding of the Grassmannian. 

\emph{Orthogonal matroids} (also known as \emph{even $\Delta$-matroids}) generalize matroids. For the orthogonal Grassmannian $\OG(n,2n)$, there is a similar embedding that uses Pfaffians of skew-symmetric matrices, of which image is cut out by Wick relations. Jin and the second author \cite{JK} made use of this embedding to define orthogonal matroids over tracts and study the cryptomorphisms, which generalize the work in~\cite{BB}. 
As an application of~\cite{JK}, Baker and the authors studied the Jacobian groups of ribbon graphs~\cite{BDK2025}.

We introduce a new cryptomorphism for orthogonal matroids over tracts. 
The key idea is that the Pfaffians of a skew-symmetric matrix are determined by its principal and almost-principal minors due to Cayley's identities (cf. Remark~\ref{rmk: Cayley}).

\subsection{Main results}

Our first main result is as follows.
\begin{thm}\label{thm:main}
    For a tract $F$, there are natural bijections between: 
    \begin{enumerate}
        \item Orthogonal $F$-matroids, defined as the projective equivalence classes of Wick $F$-functions.
        \item Projective equivalence classes of restricted Grassmann--Pl\"{u}cker $F$-functions of type D.
        \item Orthogonal $F$-signatures of orthogonal matroids.
    \end{enumerate}
\end{thm}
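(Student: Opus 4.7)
The plan is to set up a triangle of bijections between (1), (2), and (3). The map $\WtoO$ between (1) and (3) is the cryptomorphism already established in \cite{JK}, so the new content of Theorem~\ref{thm:main} is the bijection between (1) and (2), to be realized by an explicit Cayley-type transformation $\WtoGP$. Once $\WtoGP$ and its inverse are shown to be well-defined and to descend to projective equivalence classes, I would simply set $\GPtoO := \WtoO \circ \WtoGP^{-1}$ and check that its image is precisely the collection of orthogonal $F$-signatures; the rest then follows by composition.

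To construct $\WtoGP$, start from a Wick $F$-function $\varphi$ on admissible $n$-subsets of $\ground$ and define $\psi := \WtoGP(\varphi)$ on subsets of $[n]$ by the identities in Remark~\ref{rmk: Cayley}. Concretely, the ``principal'' value $\psi(I)$ is obtained as a signed product of two Wick values (mirroring $\det A_{I,I} = \pf(A_{I,I})^2$), and the ``almost-principal'' value on $I \symdiff \{i,j\}$ is defined so that Cayley's identity, expressing an almost-principal minor as a signed product of two Pfaffians, holds tautologically. The main technical step is to show that the Wick relations on $\varphi$ translate into Grassmann--Pl\"ucker three-term relations on $\psi$, and that the resulting $\psi$ automatically satisfies the ``restricted of type D'' condition. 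Conversely, given such a $\psi$, I would reconstruct $\varphi$ by inverting the Cayley formulas, proceeding by induction on the number of starred elements in an admissible $n$-subset $S$: the base case $S = [n]$ is fixed by $\psi(\emptyset)$, and each successive pair $\skewpair{i}$ is determined by a Cayley relation whose solvability is exactly guaranteed by the restricted condition. Projective equivalence on both sides absorbs the square-root ambiguity inherent in passing from $\det = \pf^2$ back to the Pfaffian.

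The main obstacle will be sign bookkeeping. Cayley's identities come with combinatorial signs depending on orderings and on the pattern of swaps $\skewpair{i}$ defining $S$, and these must be shown to match the signs appearing in each Wick relation. The cleanest route is to reduce a general Wick relation to one involving at most four indices, and then to derive it from a single classical Pl\"ucker relation combined with one Cayley identity; passage from the field case to an arbitrary tract $F$ is then formal, via the tract/hyperfield framework of \cite{BB,JK}. Once this sign compatibility is pinned down in both directions, the projective bijection (1) $\leftrightarrow$ (2) is immediate from the construction, and composing with $\WtoO$ yields the full three-way equivalence asserted in Theorem~\ref{thm:main}.
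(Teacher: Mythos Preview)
Your high-level plan---define $\WtoGP$ via Cayley-type identities and invoke the known bijection $\WtoO$ from~\cite{JK}---matches the paper, but your architecture for closing the triangle differs in a way that leaves a real gap. The paper does \emph{not} construct $\WtoGP^{-1}$ directly; instead it defines $\GPtoO$ from (2) to (3) by an explicit formula (to each hyper-transversal $S$ associate the vector $X_S$ with $X_S(i) = (-1)^{|S<i|}\varphi(S\setminus\{i\})$), checks orthogonality of this signature via a single rGP relation, and then shows that both triple compositions $\GPtoO\circ\WtoGP\circ\OtoW$ and $\WtoGP\circ\OtoW\circ\GPtoO$ are the identity. This route borrows all the homotopy content from~\cite{JK} (where Wenzel's theorem underlies $\WtoO$), so no new path-consistency argument is required; the paper makes this point explicitly in the remark following the proof.

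Your plan, by contrast, is to recover the Wick function from the rGP function by walking inductively along adjacent transversals. The missing ingredient is \emph{consistency}: two induction paths from a basepoint to a target transversal must yield the same Wick value, and this is exactly a homotopy statement for the basis graph of the underlying orthogonal matroid. Projective equivalence absorbs only a global scalar, not path-dependence; you would need either to invoke Wenzel's theorem directly or to argue that the rGP relations force all the relevant cycle identities, and neither is mentioned. Two smaller issues: the restricted GP relations in~\ref{item:rGP2} are not three-term (they have one summand for each element of $S\setminus S'$, which can be arbitrarily large), so ``three-term relations on $\psi$'' mischaracterizes what must be verified; and the assertion that passage from fields to general tracts is ``formal'' is optimistic---the paper's sign computations (e.g.\ Lemma~\ref{lem:rGP2}) are carried out directly in the tract, not by specialization from a field.
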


The equivalence between the first and third was proven in \cite{JK}, and the second is the new cryptomorphism. Our result is even new in the realizable case. Indeed, when $F$ is the complex field, we have the following result. For the formal statement, see \S\ref{sec:rGP-field}.
For an $n$-element subset $I$ of $\ground:=\{1,2,\dots,n,1^*,2^*,\dots,n^*\}$, we call $I$ a \emph{transversal} if it contains exactly one of $i$ and $i^*$ for each $1\le i\le n$, and call $I$ an \emph{almost-transversal} if it contains exactly one pair of the form $\{i,i^*\}$. Also, recall that the orthogonal Grassmannian $\OG(n,2n)$ is the set of all $n$-dimensional isotropic subspaces of $\bC^{\ground}$ with respect to a symmetric
non-degenerate bilinear form. 

\begin{cor}\label{cor:rGP-embedding-intro}(Cor. \ref{cor:rGP-embedding})
    Let $p': \OG(n,2n) \to \bP^{2^n + n(n-1) 2^{n-2} - 1}(\bC)$ be the restricted Pl\"ucker embedding; that is, the image of $V\in\OG(n,2n)$ is the Pl\"ucker coordinates indexed by transversals and almost-transversals.    
    Then the image of each connected component of $\OG(n,2n)$ under $p'$ is cut out by the restricted Pl\"ucker relations along with certain linear relations.
\end{cor}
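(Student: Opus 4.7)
The plan is to combine Theorem~\ref{thm:main} over $F=\bC$ with the standard parity invariant that separates the two connected components of $\OG(n,2n)$. Each $V\in\OG(n,2n)$ gives rise to a Wick $\bC$-function $W_V\colon\binom{\ground}{n}\to\bC$, well-defined up to a global scalar, whose coordinates are (up to sign) the Pfaffians of a skew-symmetric matrix representing $V$; classically the image of $\OG(n,2n)$ under this full Wick embedding is cut out by the Wick relations. By Theorem~\ref{thm:main} applied to $F=\bC$, projective Wick $\bC$-functions are in natural bijection with projective restricted Grassmann--Pl\"ucker $\bC$-functions of type~D, and the defining Wick relations correspond under this bijection to the restricted Pl\"ucker relations. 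Consequently the image $p'(\OG(n,2n))$ inside $\bP^{2^n+n(n-1)2^{n-2}-1}(\bC)$ is cut out by the restricted Pl\"ucker relations.

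Next I would account for the two components. For $V\in\OG(n,2n)$, a standard computation using a representative matrix $[\,I_n\mid A\,]$ with $A$ skew-symmetric (in any affine chart) shows that all $n$-subsets $I\subseteq\ground$ with $W_V(I)\ne 0$ share a common parity of $|I\cap[n]^*|$, and this parity is constant on each connected component. Restricting this observation to transversals and almost-transversals produces the linear relations $p'_I=0$ for every restricted index $I$ of the ``wrong'' parity, and adding these to the restricted Pl\"ucker relations should cut out each component individually.

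Finally, to verify sufficiency I would argue that any point satisfying the restricted Pl\"ucker relations together with the parity linear relations corresponds, via Theorem~\ref{thm:main}, to a Wick function whose \emph{full} support also respects the parity, placing the associated point of $\OG(n,2n)$ in the intended component. The main obstacle is precisely this last step: showing that the Wick extension of a parity-constrained restricted GP function inherits the parity on all of $\binom{\ground}{n}$, including the $n$-subsets containing two or more complete pairs $\skewpair{i}$, which are excluded from the restricted index set. I expect this to follow from the explicit Cayley-type formulas alluded to in the introduction, which express the ``missing'' Pfaffians in terms of the principal and almost-principal minors with controlled sign and support behavior; the bookkeeping of these signs, and the check that no further polynomial relations are needed beyond the restricted Pl\"ucker ones, is where the work lies.
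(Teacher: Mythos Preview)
Your overall shape---use Theorem~\ref{thm:main} over $\bC$ to identify projective Wick $\bC$-functions with projective restricted GP $\bC$-functions, then sort out the two components---is the paper's approach. However, there is a genuine gap: you have misidentified the ``certain linear relations.''

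You take the linear relations to be only the parity constraints (rGP3), namely $p'(V)(B)=0$ for all $B\in\cT_n^{1-\sigma}$. But the definition of a restricted GP function also includes (rGP4), the family of \emph{linear} identities
\[
\varphi(B\setminus\skewpair{i}\cup\skewpair{j})=(-1)^{\indicator{i\in B}+\indicator{j\in B}}\varphi(B\cup\skewpair{i}\setminus\skewpair{j})
\]
for $B\in\cT_n^\sigma$ and distinct $i,j\in[n]$. These are exactly the linear relations singled out in Corollary~\ref{cor:rGP-embedding}, and they do real work: they encode the skew-symmetry of the representing matrix (equivalently, the isotropy of $V$) at the level of almost-transversal coordinates. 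Without (rGP4), a function satisfying only (rGP2) and (rGP3) need not be a restricted GP function of type~D, so Theorem~\ref{thm:main} does not apply and you cannot pull back to a Wick function. Thus your claim that ``the image $p'(\OG(n,2n))$ is cut out by the restricted Pl\"ucker relations'' (before adding any linear relations) is already false.

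Two smaller confusions compound this. First, Wick functions are defined on $\cT_n$, not on $\binom{\ground}{n}$; there is nothing to ``extend'' to $n$-subsets containing two or more skew pairs, so your final paragraph is chasing a nonissue. Second, once you have the correct axioms (rGP1)--(rGP4), the sufficiency direction is immediate from the bijection in Theorem~\ref{thm:main}: the parity $\sigma$ of the rGP function matches the parity of the support of the corresponding Wick function (since $\WtoGP$ sets $\varphi(T)=\psi(T)^2$), which in turn determines the component of $\OG(n,2n)$. The only remaining check is that $\WtoGP$ really agrees with the restriction of the Pl\"ucker coordinates, which is the Cayley-identity computation carried out in Appendix~\ref{sec: realizable}.
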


We also present analogous equivalences for weak orthogonal $F$-matroids.
As noted in \cite{BB}, there are (at least) two natural notions of matroids over tracts, called weak and strong $F$-matroids, and similar situations occur in the study of orthogonal matroids \cite{JK}.
The study of weak $F$-matroids has a long history, predating the modern terminology. When $K$ is a field, a weak $K$-matroid is exactly a vector that satisfies all the $3$-term Pl\"ucker relations and whose support forms a matroid. 
It is a folklore fact that such a vector satisfies all the Pl\"ucker relations; see~\cite{BJ2023} for a proof. In his paper \cite{Tutte1958}, Tutte studied chain-group representations of matroids and used this notion to find excluded-minors of binary and regular matroids. 
For any partial field $F$, the chain-group representations are (strong) $F$-circuit sets of matroids in the sense of \cite{BB}, which are cryptomorphic to (strong) $F$-matroids. A key step in Tutte's paper is to prove that weak $F$-circuit sets of matroids are in fact strong for any partial field $F$. 
More recently, weak $F$-matroids have appeared in the study of \emph{foundations} of matroids~\cite{BL2025b,BLZ2025} and orthogonal matroids~\cite{Jin2025}.

\begin{thm}\label{thm:main-weak}
    For a tract $F$, there are natural bijections between: 
    \begin{enumerate}
        \item Weak orthogonal $F$-matroids.
        \item Projective equivalence classes of weak restricted Grassmann--Pl\"{u}cker $F$-functions of type~D.
        \item Weak $F$-circuit sets of orthogonal matroids.
    \end{enumerate}
\end{thm}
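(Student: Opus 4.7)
The plan is to parallel the proof of Theorem~\ref{thm:main}, carefully tracking which relations are used at each step so that only 3-term analogues appear. The equivalence (1) $\Leftrightarrow$ (3) should follow from the weak cryptomorphism for orthogonal matroids over tracts established in~\cite{Jin2025}, together with a translation through orthogonal $F$-signatures, so the new content lies in (1) $\Leftrightarrow$ (2): showing that the correspondence between Wick $F$-functions and restricted Grassmann--Pl\"ucker $F$-functions built via Cayley's identities restricts to a bijection between the weak classes.

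Concretely, I would fix the map $\WtoGP$ and its inverse constructed for the strong version of Theorem~\ref{thm:main}, and examine their behavior on weak objects. Given a Wick $F$-function $\varphi$ satisfying only the 3-term Wick relations, define a candidate restricted GP function $\varphi'$ on transversals and almost-transversals using Cayley's Pfaffian-to-minor formulas (cf.~Remark~\ref{rmk: Cayley}). The key step is to verify that each 3-term restricted Pl\"ucker relation among the values of $\varphi'$ is a consequence of a bounded collection of 3-term Wick relations among the values of $\varphi$, and conversely that each 3-term Wick relation lifts back to a consequence of 3-term restricted Pl\"ucker relations through the inverse map. One should also confirm that the linear relations cutting out the image of $p'$ in Corollary~\ref{cor:rGP-embedding-intro} are automatic from a bounded set of 3-term Wick relations, since otherwise the ``weak'' side of (2) would need an additional axiom. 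Once this local verification is in place, the strong cryptomorphism of Theorem~\ref{thm:main} upgrades \emph{mutatis mutandis} to the weak setting, and the composition with the $(1) \Leftrightarrow (3)$ bijection from~\cite{Jin2025} completes the proof.

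The main obstacle is the combinatorial bookkeeping in this translation. The 3-term Wick relations are indexed by pairs of transversals differing in a prescribed symmetric way, whereas the 3-term restricted Pl\"ucker relations mix transversals with almost-transversals, which sit on the other side of Cayley's identities. One must verify that expanding each 3-term restricted Pl\"ucker relation via Cayley's identities yields an $F$-expression lying in the $F$-submodule generated by 3-term Wick relations (and conversely). This is plausible because Cayley's identities are \emph{local}: they only modify a bounded window of indices, so no 4-term or higher Pl\"ucker/Wick relations should be produced as a byproduct. Nevertheless, carrying out the accounting explicitly — and in particular handling the sign and parity conventions encoded by $\sgn$, $\parity{\cdot}$, and the distinction between $[n]$ and $[n]^*$ coordinates — is the delicate point, since one wants to confirm that the weak classes on the two sides of the bijection \emph{coincide} rather than one being strictly finer.
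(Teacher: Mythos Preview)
Your plan has the right overall shape but contains a concrete misconception and misses the genuinely hard direction.

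First, the terminology: there are no ``3-term Wick relations''. For transversals $T,T'$, the quantity $|(T\symdiff T')\cap[n]|$ is always even, so the shortest nontrivial Wick relations have four terms. The paper's weak Wick functions are defined by \ref{item:W2'}, the \emph{4-term} Wick relations, together with the support being an orthogonal matroid. Likewise, the weak restricted GP axiom \ref{item:rGP2'} allows $|\hyper\setminus\hypo|=3$ or $4$. Your repeated appeal to ``3-term'' relations on both sides is simply not the right bookkeeping; once corrected, the forward direction (weak Wick $\Rightarrow$ weak rGP) is indeed a local computation exactly as you sketch --- this is the paper's Proposition~\ref{prop:WtoGP2}.

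The real gap is the converse. There is no direct formula for a map $\mathrm{GP}\to\mathrm{W}$: since $\varphi(B)=\psi(B)^2$ on transversals, recovering $\psi$ would require square roots, which a tract need not have. The inverse of $\WtoGP$ is the composite $\OtoW\circ\GPtoO$, so one must show that $\GPtoO'$ sends a weak rGP function to a weak $F$-circuit set. This is where the weak proof diverges sharply from the strong one. In the strong case, the fact that two vectors $X_{S_1},X_{S_2}\in\cC$ with equal support differ by a scalar was a free consequence of full orthogonality (Lemma~\ref{lem:scalar}); in the weak case it must be proved from scratch using only the short relations, and the paper spends Lemmas~\ref{lem:GPtoO2-well-definedness1}--\ref{lem:GPtoO2-well-definedness3} on exactly this. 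Moreover, the paper does not verify \ref{item:L1} and \ref{item:L2} directly from the rGP side. Instead it introduces a new intermediate notion, \emph{weak orthogonal $F$-signatures} (Definition~\ref{def: weak ortho}, axiom \ref{item:O4'} on $4$-modular pairs), proves that these are always weak $F$-circuit sets (Proposition~\ref{prop: weak ortho to weak circuit}), and then shows $\GPtoO'$ lands in this class (Proposition~\ref{prop: weak rGP to weak ortho}). Your proposal treats the inverse as another instance of ``expand via Cayley and match terms'', but that is not available here; the argument genuinely requires this circuit-theoretic detour and the attendant well-definedness lemmas. Finally, the $(1)\Leftrightarrow(3)$ bijection you cite is established in~\cite{JK} (Theorem~\ref{thm:JK2} here), not~\cite{Jin2025}.
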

Regarding the comparison of the two theorems, we remark that (strong) $F$-circuit sets are equivalent to (strong) orthogonal $F$-signatures of orthogonal matroids \cite{JK}. In the proof, we also suggest a new notion of weak orthogonal $F$-signatures, which is equivalent to weak $F$-circuit sets.

\subsection{Connections with other works.}

An analogous result of Corollary~\ref{cor:rGP-embedding-intro} for the Lagrangian Grassmannian, which is of type C, was proved by Boege et al.~\cite{BDKS}. 
They embedded the Lagrangian Grassmannian into the projective space through the restricted Pl\"ucker embedding described in Corollary~\ref{cor:rGP-embedding-intro}, and showed that the image of the embedding is cut out by certain quadratic equations (along with certain linear relations). Based on their work, the second author~\cite{Kim} simplified their list of quadratic relations to the restricted Pl\"ucker relations. Our results are inspired by \cite{BDKS} and \cite{Kim}, but for the orthogonal Grassmannian which is of type D. 

The readers who are familiar with \cite{Kim} might notice that orthogonal matroids are a subclass of the antisymmetric matroids defined in \cite{Kim}, and the cryptomorphisms for antisymmetric $F$-matroid for a tract $F$ have been studied in the same paper. However, orthogonal $F$-matroids are not a subclass of antisymmetric $F$-matroids because one mimics the orthogonal Grassmannian and the other mimics Lagrangian Grassmannian. For a precise discussion, see Remark~\ref{rmk: type D and type C}.

Our main contribution is to use the idea of restricted Pl\"ucker relations to study orthogonal $F$-matroids, which have previously been studied using Wick relations. We finally remark that it is difficult to find a type~C analogue of Wick relations, because there is no known notion of Pfaffians for symmetric matrices.

\subsection{Organization of the paper}

The paper is organized as follows.
In Section~\ref{sec:review}, we review the Baker--Bowler theory for matroids and orthogonal matroids over tracts.
In Section~\ref{sec:new-cryptomorphism}, we define the restricted Grassmann--Pl\"{u}cker $F$-functions of type D, study its support which we call \emph{even antisymmetric matroids}, and prove Theorem~\ref{thm:main} by presenting bijections between the three different objects. In Section~\ref{sec: weak}, we prove Theorem~\ref{thm:main-weak}. In Section~\ref{sec: mis}, we discuss two topics related to our theory: one concerning the connection to enveloping matroids, and the other concerning almost-principal minors and Pfaffian positivity.

\section{Matroids and orthogonal matroids over tracts}\label{sec:review}

We review the Baker--Bowler theory for matroids and orthogonal matroids with coefficients~\cite{BB,JK}. 
First of all, we recall the definition of tracts, which are field-like objects equipped with a relaxed additive structure that only captures whether sums are zero or not.

\begin{defn}[\cite{BB}]
    A \emph{tract} is a monoid $F = F^\times \sqcup \{0\}$, written multiplicatively, along with a set $N_F$ of formal sums of elements in $F$ modulo $0 + \sum a_i = \sum a_i$ satisfying the following axioms:
    \begin{enumerate}[itemsep=0ex, label=(T\arabic*)]
        \item $F^\times$ is an abelian group with identity $1$ and $0 \cdot a = a \cdot 0 = 0$ for all $a\in F$.
        \item $F\cap N_F = \{0\}$.
        \item\label{item:T3} There is a unique element $(-1) \in F^\times$ such that $1 + (-1) \in N_F$.
        \item\label{item:T4} For any $\alpha \in N_F$ and $c\in F^\times$, we have $c\alpha \in N_F$.
    \end{enumerate}
    The set $N_F$ is called the \emph{null set} of $F$. By~\ref{item:T3} and~\ref{item:T4}, one can deduce that $(-1)^2 = 1$ and if $a,b\in F$ with $a + b \in N_F$, then $b$ equals $-a := (-1) \cdot a$.
\end{defn}

Each field $K$ is naturally endowed with a tract structure by taking $K$ itself as a multiplicative monoid and $N_K$ as the set of formal sums $\sum a_i$ of elements that is zero as an element in the field $K$. For instance, $N_{\bF_2} = \{0, 1+1, 1+1+1+1, \dots \}$ is the set of sums with even number of $1$'s.

The \emph{Krasner hyperfield} is the tract $\bK = \{0,1\}$ with the usual multiplication, $0\cdot 0 = 0 \cdot 1 = 1 \cdot 0 = 0$ and $1\cdot 1 = 1$, and the null set $N_{\bK} = \{0, 1+1, 1+1+1, \dots\}$ consisting of $0$ and formal sums $1+\cdots+1$ with at least two $1$'s.

We refer the readers to~\cite{BB} for other basic examples, such as the sign hyperfield $\bS$ and the tropical hyperfield $\bT$.

\subsection{Matroids over tracts}
Let $E$ be a finite set with a linear ordering. We identify $E$ with $[n]:=\{1,2,\dots,n\}$.
Let $r$ be an integer with $0 \le r \le n$. For a subset $X\subseteq E$ and an element $i\in E$, we denote \[|X < i|:=\text{the number of elements in }X\text{ that are less than }i.\]

\begin{defn}\label{def:GP}
    A \emph{Grassmann--Pl\"{u}cker function} on $E$ of rank $r$ over a tract $F$ is a function $\varphi: \binom{E}{r} \to F$ satisfying the following conditions:
    \begin{enumerate}[label=\rm(GP\arabic*)]
        \item\label{item:GP1} $\varphi$ is not identically zero.
        \item\label{item:GP2} $\varphi$ satisfies the \emph{Grassmann--Pl\"{u}cker relations}; that is, for any $\hyper, \hypo \subseteq E$ with $|\hyper|=r+1$ and $|\hypo|=r-1$, we have 
        \[
            \sum_{i\in \hyper \setminus \hypo} (-1)^{|\hyper \symdiff \hypo < i|} \varphi(\hyper \setminus \{i\}) \varphi(\hypo \cup \{i\}) \in N_F.
        \]
    \end{enumerate}
\end{defn}

\begin{defn}
An \emph{$F$-matroid} is the (projective) equivalence class of a Grassmann-Pl\"ucker $F$-function, where two Grassmann-Pl\"ucker $F$-functions $\varphi_1$ and $\varphi_2$ are (projectively) equivalent if $\varphi_1=c\varphi_2$ for some $c\in F^\times$. 
\end{defn}

When $F = K$ is a field, $K$-matroids are exactly Pl\"ucker coordinates in $\bP^{\binom{n}{r}-1}(K)$. 
Also, $\bK$-matroids are ordinary matroids, as \ref{item:GP1} and~\ref{item:GP2} can be identified with the conditions that $\cB := \{B\in \binom{E}{r} \mid \varphi(B) \ne 0\}$ is nonempty and satisfies the (strong) basis exchange property.
By taking $F = \bT$ or $\bS$, we obtain valuated matroids or oriented matroids, respectively. 

\begin{defn}\label{def:sign}
    An \emph{$F$-signature} is a set $\cC$ of $F$-vectors $X\in F^E \setminus \{0\}$ such that $X \in \cC$ and $c\in F^\times$ imply $cX \in \cC$.
    We say $\cC$ is an \emph{$F$-signature of a matroid $M$} if $\supp(\cC) := \{\supp(X) \mid X\in \cC\}$ is the set of circuits of $M$. 
\end{defn}

\begin{defn}\label{def:dual-pair}
    Let $\cC$ and $\cD$ be $F$-signatures of a matroid $M$ and its dual $M^\perp$, respectively. We say $(\cC,\cD)$ is a \emph{dual pair} of $F$-signatures of $M$ if 
    \begin{enumerate}[label=\rm(DP)]
        \item\label{item:DP} $X \cdot Y := \sum_{i\in E} X(i) Y(i) \in N_F$ for all $X\in \cC$ and $Y\in \cD$.
    \end{enumerate}
\end{defn}

Given a linear subspace $V$ of $K^n$, let $\cC$ be the set of nonzero vectors in $V$ with minimal supports and let $\cD$ be the set of nonzero vectors in $V^\perp$ with minimal supports. Then $(\cC,\cD)$ is a dual pair of $K$-signatures of the matroid associated with $V$.

\begin{thm}[\cite{BB}]\label{thm:BB}
    There is a natural bijection between $F$-matroids and dual pairs of $F$-signatures of matroids.
\end{thm}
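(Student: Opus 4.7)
The plan is to construct mutually inverse maps between $F$-matroids and dual pairs of $F$-signatures of matroids, and to check compatibility with the equivalence relations on both sides. Starting from a Grassmann--Pl\"ucker $F$-function $\varphi$, I would first show that $\cB_\varphi := \{B \in \binom{E}{r} : \varphi(B) \ne 0\}$ is the basis set of a matroid $M_\varphi$; this follows from the 3-term case of~\ref{item:GP2}, which directly encodes the strong basis-exchange axiom once one restricts to bases on which $\varphi$ does not vanish. Next, for each circuit $C$ of $M_\varphi$, choose any basis $B$ and $e \notin B$ with $C \subseteq B \cup \{e\}$, and define an $F$-vector $X_C$ supported on $C$ by
\[
X_C(f) \;:=\; (-1)^{\smaller{C}{f}}\, \varphi\bigl(B \cup \{e\} \setminus \{f\}\bigr) \qquad (f \in C),
\]
up to a global sign. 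The 3-term Pl\"ucker relations imply that $X_C$ is well-defined in $F^E$ up to a scalar in $F^\times$, and setting $\cC := \{c \cdot X_C : C \text{ a circuit of } M_\varphi,\ c \in F^\times\}$ produces an $F$-signature of $M_\varphi$. One similarly produces $\cD$ from the dual Grassmann--Pl\"ucker function $\varphi^\perp$ on $\binom{E}{n-r}$, defined by $\varphi^\perp(B^c) = \pm \varphi(B)$ with an appropriate complementation sign. The orthogonality~\ref{item:DP} for $(\cC, \cD)$ is then recognized as a repackaging of a single 3-term Pl\"ucker relation for $\varphi$ applied to a suitable pair $(\hyper, \hypo)$.

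For the reverse direction, given a dual pair $(\cC, \cD)$, I would reconstruct $\varphi$ by propagation along the basis-exchange graph of $M$. Fix $B_0 \in \cB$, set $\varphi(B_0) := 1$, and for any other basis $B$ pick a sequence of single-element exchanges $B_0 = B^{(0)} \to B^{(1)} \to \cdots \to B^{(k)} = B$ with $B^{(i+1)} = (B^{(i)} \setminus \{f_i\}) \cup \{e_i\}$. At step $i$, the fundamental circuit $C_i$ of $e_i$ in $B^{(i)}$ supports a unique (up to $F^\times$) element $X^{(i)} \in \cC$, which is pinned down by~\ref{item:DP} against any cocircuit meeting $C_i$ in exactly two elements; declare
\[
\frac{\varphi(B^{(i+1)})}{\varphi(B^{(i)})} \;:=\; -\,\frac{X^{(i)}(f_i)}{X^{(i)}(e_i)}.
\]
The core technical step is to show that the product of these ratios along any exchange path from $B_0$ to $B$ depends only on the endpoints, and that the resulting $\varphi$ satisfies all of~\ref{item:GP2}, not only its 3-term instances. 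The first reduces, by standard arguments on the basis-exchange graph, to closed $4$-cycles, where the required cancellation follows from the dual-pair orthogonality; the second uses the matroid-theoretic fact that the higher Pl\"ucker-type relations for $\varphi$ are generated by 3-term ones, combined with axiom~\ref{item:T4} to pass null-set conditions through scalar multiplication.

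The main obstacle will be precisely this path-independence claim. It requires identifying, for each $4$-cycle in the exchange graph, a pair $(X,Y) \in \cC \times \cD$ whose orthogonality relation~\ref{item:DP} certifies the desired cancellation, and doing so in a way compatible with the rescaling freedom $F^\times \cdot \cC = \cC$ (and the analogous statement for $\cD$). Sign conventions must be tracked carefully throughout, with separate treatment for $4$-cycles coming from two commuting exchanges on disjoint supports versus those arising from a single circuit-cocircuit ``short'' interaction. Once path independence is established, recovering $(\cC, \cD)$ from the reconstructed $\varphi$ is straightforward by the forward direction, so this one step governs the whole bijection.
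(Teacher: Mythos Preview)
The paper does not prove this theorem; it is cited from Baker--Bowler and used without proof. So there is no paper proof to compare against, but your sketch contains a genuine gap.

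Your claim in the reverse direction that ``the higher Pl\"ucker-type relations for $\varphi$ are generated by 3-term ones'' is false over a general tract. This is precisely the distinction between weak and strong $F$-matroids discussed in Section~\ref{sec: weak} of the paper (and already in~\cite{BB}): a function whose support is a matroid and which satisfies only the 3-term instances of~\ref{item:GP2} need not satisfy all of~\ref{item:GP2}. The reduction works over fields and partial fields, but not over arbitrary tracts. In Baker--Bowler's argument, each Pl\"ucker relation indexed by $(\hyper,\hypo)$ is verified directly from~\ref{item:DP}: one takes $X\in\cC$ supported on the fundamental circuit contained in $\hyper$ and $Y\in\cD$ supported on the fundamental cocircuit disjoint from $\hypo$, and the relation $X\cdot Y\in N_F$ unwinds to the Pl\"ucker relation for $(\hyper,\hypo)$, which has $|\supp(X)\cap\supp(Y)|$ nonzero terms---a number that can be arbitrarily large.

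The same error appears in your forward direction: the orthogonality~\ref{item:DP} for a general pair $(X,Y)\in\cC\times\cD$ is a full Pl\"ucker relation, not a 3-term one. Your outline of path-independence via Maurer's homotopy theorem is the correct framework and is what Baker--Bowler use, but the final step of recovering all of~\ref{item:GP2} must go through~\ref{item:DP} directly, not through a reduction to 3-term relations that does not exist over tracts.
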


\subsection{Orthogonal matroids over tracts}\label{sec:review-orthogonal-matroids}

Jin and the second author~\cite{JK} extended the Baker--Bowler theory to orthogonal matroids by using the Wick embedding of the orthogonal Grassmannian $\OG(n,2n)$.
We first review the definition of orthogonal matroids.

\begin{defn}
    For a positive integer $n$, we denote $[n]^*:=\{1^*,2^*,\cdots,n^*\}$.
    \begin{enumerate}
        \item For $i\in [n]$, we define $(i^*)^* := i$. For $S \subseteq \ground$, we write $S^* := \{i^* \mid i\in S\}$.
        \item A subset $T$ of $\ground$ is called a \emph{transversal} if $|T\cap\{i,i^*\}|=1$ for every $i\in[n]$. 
        The set of all transversals of $\ground$ is denoted by $\cT_n$.
        \item A \emph{subtransversal} is a subset of a transversal.
    \end{enumerate}
\end{defn}

\begin{defn}\label{def:orthogonal-matroid}
    An \emph{orthogonal matroid} is a pair $( \ground,\cB)$ where $\cB \subseteq \cT_n$ satisfies the \emph{\textup{(}strong\textup{)} exchange axiom}: for any $B_1,B_2 \in \cB$ and $\skewpair{i} \subseteq B_1 \symdiff B_2$, there exists $\skewpair{j} \subseteq (B_1 \symdiff B_2) \setminus \skewpair{i}$ such that $B_1 \symdiff \skewpairs{i}{j}$ and $B_2 \symdiff \skewpairs{i}{j}$ are in $\cB$.
 
    We call $B\in \cB$ a \emph{basis} of the orthogonal matroid. 
    A \emph{circuit} is a minimal subtransversal that is not contained in any basis.
\end{defn}

By definition, for an orthogonal matroid $M=(\ground,\cB)$, the subsets $B\cap[n]$ with $B\in \cB$ have the same parity. If $M = (\ground,\cB)$ is an orthogonal matroid such that $|B\cap[n]|$ has the same size $r$ for all $B\in \cB$, then $M' = ([n],\{B\cap[n] \mid B\in \cB\})$ is a matroid of rank $r$. Conversely, one can reconstruct $M$ from the matroid $M'$. We call $M$ the \emph{lift} of $M'$.
Through this correspondence, the class of matroids is regarded as a subclass of orthogonal matroids. 
Important subclasses of orthogonal matroids include representable orthogonal matroids and ribbon-graphic orthogonal matroids.

\begin{ex}[\cite{Bouchet1988}]
    Let $\mathbf{A}$ be an $n$-by-$n$ skew-symmetric matrix over a field $K$.
    Then $\cB = \{([n]\setminus X) \cup X^* \mid \mathbf{A}_X \text{ is nonsingular}\}$ is the basis set of an orthogonal matroid, denoted $M(\mathbf{A})$, on $\ground$, where $\mathbf{A}_X$ is the principal submatrix of $\mathbf{A}$ with rows and columns indexed by $X \subseteq [n]$.
    Given an orthogonal matroid $M=(\ground,\cB)$ and a set $R\subseteq [n]$, a pair $M\symdiff R := (\ground, \cB')$ is an orthogonal matroid, where $\cB' = \{B\symdiff (R\cup R^*) \mid B\in \cB\}$.
    An orthogonal matroid $M$ is \emph{representable} over $K$ if it is equal to $M(\mathbf{A})\symdiff R$ for some matrix $\mathbf{A}$ over $K$ and some set $R$.
    Bouchet~\cite{Bouchet1988} showed that a matroid is representable over $K$ if and only if its lift is representable over~$K$.
\end{ex}

\begin{ex}
For any graph, one can associate a matroid to it, known as a graphic matroid. Analogously, one can associate an orthogonal matroid to any \emph{ribbon graph} (i.e., a graph $G$ with a cellular embedding into an orientable surface $\Sigma$); see \cite{Bouchet1987b, BBGS2000}. 
\end{ex}

We recall the definition of the orthogonal Grassmannian and Wick embedding. 
\begin{defn}
    Let $\bC^{2n}(=\bC^{\ground})$ be the $2n$-dimensional space equipped with the symmetric bilinear form \[\bilin{X}{Y} = \sum_{i=1}^n  \left(X(i) Y(i^*) + X(i^*) Y(i) \right).\] A subspace $V$ of $\bC^{2n}$ is \emph{isotropic} if $\bilin{X}{Y} = 0$ for all $X,Y\in V$.
    The \emph{orthogonal Grassmannian} $\OG(n,2n)$ is the set of $n$-dimensional isotropic subspaces in $\bC^{2n}$.
\end{defn}

The space $\OG(n,2n)$ is embedded into $\bP^{2^n - 1}(\bC)$ via the Wick embedding $w$. For simplicity, we assume that $V$ is the row space of an $n$-by-$2n$ matrix $(\mathbf{I}_n \mid \mathbf{A})$ with columns indexed by $1,2,\dots,n,1^*,2^*,\dots,n^*$ in this order. Then $\mathbf{A}$ is skew-symmetric. The \emph{Wick coordinates} $w(V) \in \bP^{2^n - 1}(\bC)$ are defined as 
\[
    w(V)(T) = \pf(\mathbf{A}_{T\setminus[n]}) \text{ for } T\in \cT_n,
\]
where $\mathbf{A}_{T\setminus[n]}$ is the principal submatrix of $\mathbf{A}$ with columns indexed by $T\setminus[n]$.

Analogous to the Grassmannian and the Pl\"ucker relations, the image of $\OG(n,2n)$ under the \emph{Wick embedding} is cut out by the \emph{Wick relations}, which are used to define orthogonal matroids over tracts in~\cite{JK}.

\begin{defn}
    A \emph{\textup{(}strong\textup{)} Wick function} on $\ground$ over a tract $F$ is a function $\psi : \cT_n \to F$ satisfying the following conditions:
    \begin{enumerate}[label=\rm(W\arabic*)]
        \item\label{item:W1} $\psi$ is not identically zero.
        \item\label{item:W2} $\psi$ satisfies the \emph{Wick relations}; that is, for any $T, T'\in \cT_n$, we have 
        \[
            \sum_{i\in(T \symdiff T')\cap [n]} (-1)^{|(T \symdiff T')\cap [n]<i|} \psi(T \symdiff\skewpair{i}) \psi(T' \symdiff\skewpair{i})\in N_F.
        \]
    \end{enumerate}
\end{defn}

\begin{defn}
An \emph{orthogonal $F$-matroid} is an equivalence class of Wick $F$-functions, where two Wick functions are equivalent if one is a nonzero multiple of the other.    
\end{defn}

\begin{lem}[\cite{JK}]
    For an orthogonal $F$-matroid $[\psi]$ on $\ground$, the pair $(\ground, \{B\in \cT_n \mid \psi(B)\ne 0\}$) is an orthogonal matroid.
\end{lem}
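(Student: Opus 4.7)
The plan is to verify the two defining conditions for $(\ground, \cB)$ with $\cB := \{B \in \cT_n \mid \psi(B)\ne 0\}$ to be an orthogonal matroid. Nonemptiness of $\cB$ is exactly \ref{item:W1}; only the strong exchange axiom requires work. My intention is to extract it from a single application of \ref{item:W2} to a cleverly chosen pair of transversals, in direct analogy with the standard deduction of basis exchange from Grassmann--Pl\"ucker relations in \cite{BB}.

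So fix $B_1, B_2 \in \cB$ and $\{i, i^*\} \subseteq B_1 \symdiff B_2$, assuming without loss of generality that $i \in [n]$. We must exhibit $\{j, j^*\} \subseteq (B_1 \symdiff B_2) \setminus \{i, i^*\}$ such that both $B_1 \symdiff \{i, i^*, j, j^*\}$ and $B_2 \symdiff \{i, i^*, j, j^*\}$ lie in $\cB$. The key idea is to apply \ref{item:W2} not to $(B_1, B_2)$ directly, but to
\[
    T := B_1 \symdiff \{i, i^*\}, \qquad T' := B_2 \symdiff \{i, i^*\}.
\]
Since we symmetric-differenced both by the same pair, $T \symdiff T' = B_1 \symdiff B_2$, so $(T \symdiff T') \cap [n]$ contains $i$ together with exactly the same non-$i$ indices as $(B_1 \symdiff B_2) \cap [n]$.

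Writing out the Wick relation for $(T, T')$, the summand indexed by $k = i$ is
\[
    (-1)^{|(T \symdiff T') \cap [n] < i|}\,\psi(T \symdiff \{i, i^*\})\,\psi(T' \symdiff \{i, i^*\}) = \pm\,\psi(B_1)\psi(B_2),
\]
which lies in $F^\times$ by hypothesis. For $k \ne i$ in $(T \symdiff T') \cap [n]$, the summand has the form $\pm\, \psi(B_1 \symdiff \{i, i^*, k, k^*\})\,\psi(B_2 \symdiff \{i, i^*, k, k^*\})$. By \ref{item:W2} the entire formal sum lies in $N_F$. Since $F \cap N_F = \{0\}$ by axiom (T2), a single nonzero element of $F$ cannot lie in $N_F$; hence at least one summand with $k \ne i$ must be nonzero, and any such $k$ provides the desired $j$, noting $\{j, j^*\} = \{k, k^*\} \subseteq (B_1 \symdiff B_2) \setminus \{i, i^*\}$ automatically.

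The only real subtlety is the choice of shifting by $\{i, i^*\}$ at the outset, which is precisely what makes the $k = i$ term of the Wick relation equal to the distinguished product $\psi(B_1)\psi(B_2)$ that we wish to ``cancel''. Everything else is bookkeeping with the tract axioms, so I do not anticipate any genuine obstacle.
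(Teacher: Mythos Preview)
Your proof is correct and is exactly the standard argument (the one given in \cite{JK}, which the paper cites without reproducing). The shift $T = B_1 \symdiff \skewpair{i}$, $T' = B_2 \symdiff \skewpair{i}$ so that the $k=i$ term of the Wick relation becomes $\pm\psi(B_1)\psi(B_2)$ is precisely the intended trick, and your invocation of (T2) to force a second nonzero summand is the right way to finish.
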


Orthogonal $\bC$-matroids are the same as Wick coordinates in $\bP^{2^n - 1}(\bC)$, and orthogonal $\bK$-matroids are the same as ordinary orthogonal matroids.
By~{\cite[Prop.~3.4]{JK}}, orthogonal $F$-matroids whose underlying orthogonal matroids are lifts of matroids (see the paragraph below Definition~\ref{def:orthogonal-matroid}) are the same as $F$-matroids.

\begin{defn}
Let $\cC \subseteq F^{\ground}$ be an $F$-signature (see Definition~\ref{def:sign}). If $\supp(\cC)$ is the set of circuits of an orthogonal matroid $M$ on $\ground$, then we call $\cC$ an \emph{$F$-signature of $M$}.   Furthermore, an $F$-signature $\cC$ of an orthogonal matroid is called \emph{orthogonal} if
    \begin{enumerate}[label=\rm(O)]
        \item\label{item:O} $\langle X, Y \rangle := \sum_{i\in\ground} X(i) Y(i^*) \in N_F$ for all $X,Y\in \cC$.
    \end{enumerate}
\end{defn}

\begin{ex}
If $V$ is an $n$-dimensional isotropic subspace of $\bC^{2n}$, then the set of nonzero vectors in $V$ whose supports are minimal subtransversals forms an orthogonal $\bC$-signature of the orthogonal matroid associated with $V$. It coincides with the chain-group representation (over $\bC$) of even $\Delta$-matroids in~\cite{Bouchet1988}.
\end{ex}

If two vectors in an orthogonal $F$-signature have the same support, then they differ by a scalar. Actually, we have the following stronger result. 

\begin{lem}\label{lem:scalar}
Let $\cC$ be an orthogonal $F$-signature of an orthogonal matroid. 
If $X_1$ and $X_2$ in~$\cC$ have the same support, then $X_1=c X_2$ for some $c\in F^\times$.  
\end{lem}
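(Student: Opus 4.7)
The plan is to reduce the identity $X_1 = c X_2$ to two-term relations inside the tract. Write $C := \supp(X_1) = \supp(X_2)$, which by hypothesis is a circuit of the underlying orthogonal matroid $M$, hence in particular a subtransversal with $C \cap C^* = \emptyset$. Fix a reference element $i_0 \in C$ and put $c := X_1(i_0)\, X_2(i_0)^{-1} \in F^\times$. Since $X_1$ and $cX_2$ both vanish outside $C$ and agree at $i_0$, it suffices to show $X_1(j) = c X_2(j)$ for every $j \in C \setminus \{i_0\}$.

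For each such $j$, I test $X_1$ and $X_2$ against a third vector $Z \in \cC$ whose support $C' := \supp(Z)$ is a circuit of $M$ satisfying $C' \cap C^* = \{i_0^*, j^*\}$. Granting existence of $Z$, the only indices contributing nontrivially to $\langle X_k, Z\rangle$ are those $i$ with $i \in \supp(X_k) = C$ and $i^* \in \supp(Z) = C'$, that is, $i \in C \cap (C')^* = \{i_0, j\}$. Hence the orthogonality axiom~\ref{item:O} collapses to the two-term relation
\[
    \langle X_k, Z \rangle \;=\; X_k(i_0)\, Z(i_0^*) + X_k(j)\, Z(j^*) \;\in\; N_F, \qquad k = 1, 2.
\]
The tract property stated right after the definition of a tract, namely that $a + b \in N_F$ with $a, b \in F$ forces $b = -a$, then yields $X_k(j) = -X_k(i_0)\, Z(i_0^*)\, Z(j^*)^{-1}$. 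Taking the ratio for $k = 1, 2$ gives $X_1(j)/X_2(j) = X_1(i_0)/X_2(i_0) = c$, as required.

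The step I expect to be the main obstacle is the production of the test vector $Z$: one must show that for every circuit $C$ of an orthogonal matroid $M$ and every choice of distinct $i_0, j \in C$, there is a circuit $C'$ of $M$ whose $*$-image meets $C$ in exactly $\{i_0, j\}$ (equivalently, $C' \cap C^* = \{i_0^*, j^*\}$); once such a $C'$ exists, the existence of $Z \in \cC$ with $\supp(Z) = C'$ is automatic from the assumption that $\supp(\cC)$ is the set of circuits of $M$. This is the orthogonal-matroid analogue of the classical fact that two prescribed elements of a matroid circuit lie in some cocircuit whose intersection with the circuit is exactly that pair. I expect it to follow from the strong exchange axiom of Definition~\ref{def:orthogonal-matroid}, applied to pairs of bases that differ on $\{i_0, i_0^*\}$ and are adapted to the subtransversal $C^* \setminus \{i_0^*, j^*\}$; alternatively, it can be extracted from the explicit description of circuit vectors coming from a Wick-function representative of $\cC$ as developed in~\cite{JK}.
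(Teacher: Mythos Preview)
The paper does not give its own argument here; it simply cites \cite[Lem.~4.13]{JK}, which proves the same statement under a weaker hypothesis than~\ref{item:O}. Your direct two-term reduction is in fact the standard route and is almost certainly what the cited reference does: only two-term instances of~\ref{item:O} are needed, which is why the result holds under weaker orthogonality axioms.

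The only genuine gap is the one you flag yourself, the existence of the test circuit $C'$ with $C'\cap C^*=\{i_0^*,j^*\}$. This is easy to supply using fundamental circuits, and you should write it down rather than leave it as an expectation. Since every circuit of an orthogonal matroid is fundamental (Lemma~\ref{lem:two circuits}(1)), write $C=\FC(B,e)$ and set $T:=B\symdiff\skewpair{e}$, so that $C=\{f\in T: T\symdiff\skewpair{f}\in\cB\}$. As $i_0\in C$, the set $B_1:=T\symdiff\skewpair{i_0}$ is a basis; since $j\in T\setminus\{i_0\}$ we have $j^*\in B_1^*$. Put $C':=\FC(B_1,j^*)$ and $T':=B_1\symdiff\skewpair{j}=T\symdiff\skewpairs{i_0}{j}$. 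Then $C'\subseteq T'$ and $T'\cap T^*=\{i_0^*,j^*\}$, so $C'\cap C^*\subseteq\{i_0^*,j^*\}$; conversely $i_0^*,j^*\in C'$ because $T'\symdiff\skewpair{i_0}=T\symdiff\skewpair{j}$ and $T'\symdiff\skewpair{j}=B_1$ are bases. This gives exactly $C'\cap C^*=\{i_0^*,j^*\}$.

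Your proposed alternative, extracting $C'$ from a Wick-function representative of $\cC$ via the bijection $\WtoO$ of Theorem~\ref{thm:JK}, should be dropped: the surjectivity of $\WtoO$ in~\cite{JK} already relies on the scalar-multiple property you are trying to prove, so that route is circular.
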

\begin{proof}
    The same result with a weaker assumption than \ref{item:O} is proven in~\cite[Lem.~4.13]{JK}.
\end{proof}

The notion of orthogonal $F$-signatures of orthogonal matroids generalizes dual pairs of $F$-signatures of matroids.

A bijection between orthogonal $F$-matroids and orthogonal $F$-signatures of orthogonal matroids is constructed in~\cite{JK}. We briefly review this construction with a slightly different convention.
Let $\psi$ be a Wick $F$-function. For each $T\in \cT_n$ such that $T\symdiff \skewpair{x}$ is a basis for some $x\in [n]$, let $X_T \in F^{\ground}$ be the vector such that \[X_T(e)  = (-1)^{\smallereq{T^*\cap[n]}{\ol{e}}} \psi(T\symdiff \skewpair{e})\footnote{In~\cite{JK}, $X_T$ is defined by ratios $X_T(e)/X_T(x)$, which is equivalent to, up to rescaling, defining it as $X_T(e)  = (-1)^{\smallereq{T\cap[n]}{\ol{e}}} \psi(T\symdiff \skewpair{e})$.}\] if $e\in T$ and $X_T(e) = 0$ otherwise, where $\ol{e}$ is the unique element in $\{e,e^*\} \cap[n]$. 
Let $\cC$ be the set of all nonzero scalar multiples of such vectors, i.e., \[\cC = \{c X_T \mid c\in F^\times, \ T\in \cT_n, \ T\symdiff \skewpair{x} \in \supp(\psi) \text{ for some } x\in [n]\},\] and we define \[\WtoO(\eqcls{\psi}) := \cC.\]

\begin{thm}[\cite{JK}]\label{thm:JK}
    $\WtoO$ is a bijection between orthogonal $F$-matroids and orthogonal $F$-signatures.
\end{thm}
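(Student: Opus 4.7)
The plan is to verify that $\WtoO$ takes values in orthogonal $F$-signatures of orthogonal matroids, to reverse-engineer a Wick function from such a signature, and to show the two constructions are mutually inverse. For the first point, one must check that $\supp(\WtoO([\psi]))$ is the circuit set of the underlying orthogonal matroid $M = (\ground, \supp(\psi))$. From the defining formula, $\supp(X_T) = \{e \in T : T \symdiff \skewpair{e} \in \cB\}$, which (when $T$ is adjacent to some basis) is a minimal subtransversal that is not contained in any basis, by a standard application of the strong exchange axiom; conversely, every circuit of $M$ arises in this way for an appropriate choice of $T$.

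Next, one verifies the orthogonality $\langle X_T, X_{T'} \rangle \in N_F$. Only terms with $e \in T \setminus T'$ (equivalently $e^* \in T' \setminus T$) contribute, and the map $e \mapsto \ol{e}$ is a bijection onto $(T \symdiff T') \cap [n]$. A direct modulo-$2$ calculation, using that $|T^* \cap [n] \le i|$ counts indices $j \le i$ with $j \notin T$, yields
\[
    |T^* \cap [n] \le \ol{e}| + |T'^* \cap [n] \le \ol{e}| \equiv |(T \symdiff T') \cap [n] \le \ol{e}| \pmod{2},
\]
so the sign factors in $X_T(e) X_{T'}(e^*)$ match those in the Wick relation \ref{item:W2} applied to $(T,T')$ up to a single global sign. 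The Wick relation therefore gives $\langle X_T, X_{T'} \rangle \in N_F$.

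For the inverse, fix a basis $B_0$ and a transversal $T_0 = B_0 \symdiff \skewpair{x_0}$. By Lemma~\ref{lem:scalar}, the vector $X_{T_0} \in \cC$ with support equal to the circuit through $T_0$ is determined up to a global scalar, so one fixes a representative and sets $\psi(T_0 \symdiff \skewpair{e}) := (-1)^{|T_0^* \cap [n] \le \ol{e}|} X_{T_0}(e)$ on the bases adjacent to $T_0$. To extend $\psi$ to all of $\cB$, one propagates along neighboring transversals $T, T'$ sharing a common basis $T \symdiff \skewpair{x} = T' \symdiff \skewpair{y}$: an already-defined $\psi$-value on this shared basis pins down the scalar of the next $X_{T'} \in \cC$ uniquely, and one then reads off $\psi$ on the remaining bases adjacent to $T'$.

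The main obstacle is showing that this propagation is consistent, i.e.\ that distinct paths in the graph whose vertices are near-basis transversals yield the same $\psi$. The natural strategy is to reduce arbitrary loops to short cycles corresponding to pairs $(T, T')$ with $|T \symdiff T'|$ small, and to show that closure around each short cycle follows directly from the orthogonality relation \ref{item:O} together with the scalar-rigidity of Lemma~\ref{lem:scalar}. Once $\psi$ is globally defined on $\cB$ and extended by zero elsewhere, it is nonzero and satisfies the Wick relations \ref{item:W2}: these are obtained from $\langle X_T, X_{T'} \rangle \in N_F$ by reversing the sign calculation of the second paragraph. Finally, one checks that $\WtoO$ and this inverse construction match on representatives by design, yielding the claimed bijection.
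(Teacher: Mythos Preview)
This theorem is not proved in the present paper; it is quoted from~\cite{JK}, and the paper only records the formula for $\WtoO$ before stating the result. Your proposal is therefore an attempt to reconstruct the argument from the original source rather than to match a proof that appears here.

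Your outline has the right architecture: show $\WtoO$ lands in orthogonal $F$-signatures (circuit support via fundamental circuits, orthogonality via a sign comparison with the Wick relations), then build an inverse by fixing a basis and propagating $\psi$-values along overlapping transversals, and finally check that the propagation is path-independent. The genuine gap is in the last step. You write that ``the natural strategy is to reduce arbitrary loops to short cycles'' and that closure around short cycles follows from~\ref{item:O} together with Lemma~\ref{lem:scalar}, but you neither identify which short cycles generate the fundamental group of the relevant graph nor prove that they do. As the paper itself notes in the remark at the end of \S\ref{sec:proof-main-theorem}, the proof in~\cite{JK} invokes Wenzel's homotopy theorem~\cite{Wenzel1995} for the basis graph of an orthogonal matroid, which plays exactly this role (analogous to Maurer's theorem for ordinary matroids in~\cite{BB}). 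Without that theorem or an equivalent, your consistency argument is a plan rather than a proof; and the verification that each generating short cycle closes up correctly is itself a nontrivial sign computation that you have not supplied.

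A minor secondary point: you propagate along a graph whose vertices are near-basis transversals, with an edge when two transversals share a basis. The homotopy theorem is usually stated for the basis graph (vertices are bases, edges join bases differing in a single skew pair), so translating your propagation scheme into that setting, or adapting the homotopy statement to your graph, requires some additional bookkeeping.
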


\section{New cryptomorphism for orthogonal matroids over tracts}\label{sec:new-cryptomorphism}

In \S\ref{sec:new}, we define restricted Grassmann--Pl\"{u}cker functions over a tract, which will be our new equivalent definition of orthogonal $F$-matroids in the main theorem. 
In \S\ref{sec:rGP-field}, we explain how restricted Grassmann--Pl\"ucker functions over $\bC$ arise from the Pl\"ucker embedding of the orthogonal Grassmannian. In \S\ref{sec:rGP-support}, we study the support of a restricted Grassmann--Pl\"ucker function, which we call an even antisymmetric matroid. 
In \S\ref{sec:proof-main-theorem}, we prove Theorem~\ref{thm:main}

\subsection{The new  cryptomorphism}\label{sec:new}
\begin{defn}
    \begin{enumerate}
        \item For a logical statement $S$, let 
        
        \[
\indicator{S} = \begin{cases}
    1, & \text{if }S \text{ is true}, \\
    0, & \text{if }S \text{ is false}.
\end{cases}
\]

        \item For any $i\in\ground$, the set $\skewpair{i}$ is called a \emph{skew pair}. 
        \item An $n$-subset $A$ of $\ground$ is an \emph{almost-transversal} if it contains exactly one skew pair. We denote by $\cA_n$ the collection of all almost-transversals.
        \item For $\sigma \in \{0,1\}$, let $\cT_n^\sigma$ be the collection of transversals $T$ such that $\abs{T\cap [n]^*} \equiv \sigma \pmod{2}$.
        \item A subset $S$ of $\ground$ is called a \emph{hyper-transversal} if it is obtained from a transversal by adding one element; it is called a \emph{hypo-transversal} if it is obtained by removing one element from a transversal.
    \end{enumerate}
\end{defn}

Recall that we are using the order
\[1 < 1^* < 2 < 2^* < \dots < n < n^*,\]
so that $|X < i|$ denotes the number of elements of $X$ smaller than $i$ with respect to this order, for a subset $X$ and an element $i$ of $\ground$.

\begin{defn}\label{defn:rGP}
    A \emph{restricted Grassmann--Pl\"{u}cker function \textup{(}of type D\textup{)}} over a tract $F$ is a function $\varphi: \cT_n \cup \cA_n \to F$ satisfying the following conditions:
    \begin{enumerate}[label=\rm(rGP\arabic*)]
        \item\label{item:rGP1} $\varphi$ is not identically zero.
        \item\label{item:rGP2} $\varphi$ satisfies the \emph{restricted Grassmann--Pl\"{u}cker relations}; that is, for any hyper-transversal~$\hyper$ and hypo-transversal~$\hypo$, we have 
        \[
            \sum_{i\in \hyper \setminus \hypo} 
            (-1)^{\abs{\hyper \symdiff \hypo < i}} 
            \varphi(\hyper \setminus \{i\}) 
            \varphi(\hypo \cup \{i\}) \in N_F.
        \]
        
        \item\label{item:rGP3} There is (a unique) $\sigma \in \{0,1\}$ such that $\varphi(B) = 0$ for any $B\in \cT_n^{1-\sigma}$.
        \item\label{item:rGP4} For any $B\in \cT_n^\sigma$ and distinct $i,j\in [n]$, we have
        \[
            \varphi(B\setminus\skewpair{i} \cup\skewpair{j})=(-1)^{ \indicator{i\in B} + \indicator{j\in B}}            \varphi(B\cup\skewpair{i} \setminus\skewpair{j}).
        \]
    \end{enumerate}
    We denote the equivalence class of $\varphi$ by \[\eqcls{\varphi}:=\{c \varphi\mid c\in F^\times\}.\] 
\end{defn}

\begin{rmk}
It is well known that the orthogonal Grassmannian $\OG(n,2n)$ has two connected components, each of which is a \emph{spinor variety}; see \cite{Rincon2012}. The two choices of $\sigma$ in \ref{item:rGP3} correspond to these two components.    
\end{rmk}

The relation in \ref{item:rGP4} says that the almost-transversals $B\setminus\skewpair{i} \cup\skewpair{j}$ and $B\cup\skewpair{i} \setminus\skewpair{j}$ may only differ by a sign. Here is an alternative way to describe it. 

\begin{lem}\label{lem: rgp4}
Suppose $\varphi$ satisfies \ref{item:rGP4}.
Let $A\in\cA_n$ with $\skewpair{i}\cap A=\emptyset$ and $\skewpair{j}\subseteq A$. Then 
\[\varphi(A)=(-1)^{|A\cap [n]^*|+1-\sigma}\varphi(A\cup\skewpair{i} \setminus\skewpair{j}).\]      
\end{lem}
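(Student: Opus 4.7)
The plan is to apply \ref{item:rGP4} to a transversal $B$ obtained from $A$ by a single swap, and then match signs. Since $A$ contains the skew pair $\skewpair{j}$ and is disjoint from $\skewpair{i}$, one produces a transversal by replacing one element of $\skewpair{j}$ in $A$ by one element of $\skewpair{i}$. Specifically, for any choice of $\tilde{i}\in\skewpair{i}$ and $\tilde{j}\in\skewpair{j}$, the set $B := (A\setminus\{\tilde{j}\})\cup\{\tilde{i}\}$ is a transversal, and one checks directly that $B\setminus\skewpair{i}\cup\skewpair{j}=A$ and $B\cup\skewpair{i}\setminus\skewpair{j}=A\cup\skewpair{i}\setminus\skewpair{j}$. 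Hence \ref{item:rGP4} applied to $B$ will relate exactly the two values of $\varphi$ appearing in the lemma, up to a sign. Note that $i\ne j$ is automatic from $i\notin A$ and $j\in A$, so \ref{item:rGP4} is applicable.

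The remaining work is a parity computation. Setting $\alpha:=\indicator{\tilde{i}\in[n]^*}$ and $\beta:=\indicator{\tilde{j}\in[n]^*}$, a direct count gives $|B\cap[n]^*|=|A\cap[n]^*|+\alpha-\beta$, so the condition $B\in\cT_n^\sigma$ needed to invoke \ref{item:rGP4} becomes $\alpha-\beta\equiv\sigma-|A\cap[n]^*|\pmod 2$. This is satisfied by two of the four possible choices of $(\tilde i,\tilde j)$, so an admissible $B$ always exists. On the other side of the identity, since $i\notin A$ and $j\in A$, the indicators in \ref{item:rGP4} simplify to $\indicator{i\in B}=1-\alpha$ and $\indicator{j\in B}=\beta$, giving the sign $(-1)^{1-\alpha+\beta}$ on the right-hand side of \ref{item:rGP4}. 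Substituting the parity constraint for $B\in\cT_n^\sigma$, the exponent reduces modulo $2$ to $|A\cap[n]^*|+1-\sigma$, which is precisely the sign claimed in the lemma.

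There is no conceptual obstacle here beyond careful parity bookkeeping. The only mildly subtle points are translating each $\{0,1\}$-valued indicator correctly (noting the asymmetry $i\notin A$ vs.\ $j\in A$, which is the source of the $+1$ in $|A\cap[n]^*|+1-\sigma$) and confirming that some choice of $(\tilde i,\tilde j)$ always places $B$ in the correct parity class $\cT_n^\sigma$ so that \ref{item:rGP4} can be invoked.
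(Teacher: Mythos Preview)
Your argument is correct and follows the same approach as the paper: choose a transversal $B$ in $\cT_n^\sigma$ by swapping one element of $\skewpair{j}$ for one of $\skewpair{i}$, apply \ref{item:rGP4}, and reduce the sign via the parity constraint $|B\cap[n]^*|\equiv\sigma$. One small point worth making explicit: \ref{item:rGP4} is stated for $i,j\in[n]$, so you should note (as the paper does) that without loss of generality $i,j\in[n]$, which justifies your identities $\indicator{i\in B}=1-\alpha$ and $\indicator{j\in B}=\beta$.
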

\begin{proof}
Without loss of generality, we may assume that $i,j\in [n]$. Let $B$ be the transversal $A\setminus\{j\}\cup\{i\}$ or $A\setminus\{j^*\}\cup\{i\}$ such that $B\in\cT_n^\sigma$. By \ref{item:rGP4}, 
\begin{align*}
\varphi(A) & =\varphi(B\setminus\skewpair{i} \cup\skewpair{j})=(-1)^{ \indicator{i\in B} + \indicator{j\in B}}            \varphi(B\cup\skewpair{i} \setminus\skewpair{j})\\
& =(-1)^{ \indicator{i\in B} + \indicator{j\in B}}\varphi(A\cup\skewpair{i} \setminus\skewpair{j}).    
\end{align*}
For the sign, we have \[|B\cap[n]^*|-|A\cap[n]^*|=\indicator{i^*\in B} + \indicator{j^*\in B}-1=1-\indicator{i\in B} + \indicator{j\in B},\]
and hence
\[\indicator{i\in B} + \indicator{j\in B}\equiv |A\cap[n]^*|+1-\sigma \pmod{2}.
\qedhere \]
\end{proof}

\begin{rmk}
Restricted Grassmann--Pl\"{u}cker functions also satisfy some other Grassmann--Pl\"{u}cker relations except \ref{item:rGP2}; see Proposition~\ref{prop:GP on TA}.    
\end{rmk}

\subsection{Restricted Grassmann--Pl\"ucker functions over the complex field}\label{sec:rGP-field}

Via the Pl\"ucker embedding, the orthogonal Grassmannian $\OG(n,2n)$ is a subvariety of the Grassmannian $\Gr(n,2n)$. It has the two connected components, say $\OG^+(n,2n)$ and $\OG^-(n,2n)$, where $\OG^+(n,2n)$ is the component containing $V\in \OG(n,2n)$ with $p(V)([n]) \ne 0$, and $\OG^-(n,2n)$ is the other component.
Each of the components is cut out by the Pl\"ucker relations along with certain linear relations arising from the isotropic condition.
Let $p(V) \in \bP^{\binom{2n}{n}-1}(\bC)$ be the Pl\"ucker coordinates associated with $V$ defined by
\[
    p(V)(I) := \det(\mathbf{A}_I) 
    \text{ for $n$-subsets } I \text{ of } \ground,
\]
where $\mathbf{A}$ is an $n$-by-$2n$ matrix whose row space is $V$ and whose columns are indexed by $1 < 1^* < 2 < 2^* < \dots < n < n^*$ in order, and $\mathbf{A}_I$ is the submatrix of $\mathbf{A}$ with columns indexed by $I$.
In the case that $\mathbf{A}_{[n]}$ is the identity matrix (so, $V\in OG^+(n,2n)$), the submatrix $\mathbf{A}_{[n]^*}$ is skew-symmetric and thus for all $B\in \cT_n^0$ and distinct $i,j\in[n]$,
\begin{equation*}
    p(V)(B\setminus\skewpair{i} \cup\skewpair{j})
    =
    (-1)^{m}
    p(V)(B\cup\skewpair{i} \setminus\skewpair{j})
    \tag{$\ast$}\label{eq:linear-relation}
\end{equation*}
where $m = \indicator{i\in B} + \indicator{j\in B}$, 
and $p(V)(B) = 0$ for any $B\in \cT_n^{1}$.
One can deduce the same linear equations for any $V\in OG^+(n,2n)$, and one can also deduce the analogous linear equations for $V\in OG^-(n,2n)$ by interchanging $\cT_n^0$ and $\cT_n^1$.

\begin{ex}
    Let $\mathbf{A}$ and $\mathbf{A}'$ be the following $2$-by-$4$ matrices:
    \[
        \mathbf{A} = \begin{pmatrix}
            1 & 0 & 0 & a \\
            0 &-a & 1 & 0 \\
        \end{pmatrix}
        \ \text{ and } \
        \mathbf{A}' = \begin{pmatrix}
            0 & 1 & 0 & a \\
           -a & 0 & 1 & 0 \\
        \end{pmatrix},
    \]
    where $a\in \bC$ and the columns are indexed by $1 < 1^* < 2 < 2^*$ in order.
    We denote by $V$ and $V'$ the row spaces of $\mathbf{A}$ and $\mathbf{A}'$, respectively. Then $V\in \OG^+(2,4)$ and $V'\in \OG^-(2,4)$.
    If we take $B = \{1,2\} \in \cT_n^0$, $i=1$, and $j=2$, then we have:
    \begin{align*}
        p(V)(B\setminus\skewpair{1} \cup\skewpair{2}) &= \det(\mathbf{A}_{\skewpair{2}}) = -a,
        \\
        p(V)(B\cup\skewpair{1} \setminus\skewpair{2}) &= \det(\mathbf{A}_{\skewpair{1}}) = -a.
    \end{align*}
    Thus, the linear relation~\eqref{eq:linear-relation} holds for this choice of $B$, $i$, and $j$.
    We can also verify \eqref{eq:linear-relation} for $V'$ applied to $B' = \{1^*,2\} \in \cT_n^1$, $i=1$, and $j=2$, as we have the following:
    \begin{align*}
        p(V')(B'\setminus\skewpair{1} \cup\skewpair{2}) = \det(\mathbf{A}'_{\skewpair{2}}) &= -a,
        \\
        p(V')(B'\cup\skewpair{1} \setminus\skewpair{2}) = \det(\mathbf{A}'_{\skewpair{1}}) &= a.
    \end{align*}
\end{ex}

Theorem~\ref{thm:main} applied to $F=\bC$ implies that $V$ can be recovered even if we restrict $p(V)$ to the coordinates indexed by $\cT_n \cup \cA_n$.

\begin{cor}\label{cor:rGP-embedding}(Cor. \ref{cor:rGP-embedding-intro})
    Let $p' : \OG(n,2n) \to \bP^{2^n + n(n-1) 2^{n-2} - 1}(\bC)$ be the restricted Pl\"ucker embedding defined by
    \[p'(V)=(p(V)(I)\colon I\in \cT_n \cup \cA_n).\]
    Then the image of $\OG^+(n,2n)$ under $p'$ is cut out by the restricted Pl\"ucker relations along with the linear relations:
    for each $B\in \cT_n^0$ and distinct $i,j\in[n]$,
    \[
        X_{B\setminus\skewpair{i} \cup\skewpair{j}} =
        (-1)^{\indicator{i\in B} + \indicator{j\in B}}
        X_{B\cup\skewpair{i} \setminus\skewpair{j}},
    \]
    and $X_B= 0$ for any $B\in \cT_n^{1}$.
    The same holds for $\OG^-(n,2n)$ with $\cT_n^0$ and $\cT_n^1$ interchanged.
\end{cor}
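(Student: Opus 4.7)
The plan is to derive the corollary from Theorem~\ref{thm:main} specialized to $F = \bC$. Projective equivalence classes of Wick $\bC$-functions on $\ground$ are precisely the Wick coordinates $[w(V)]$ of points $V \in \OG(n,2n)$, so orthogonal $\bC$-matroids biject naturally with $\OG(n,2n)$. By Theorem~\ref{thm:main}, they further biject with projective equivalence classes of restricted Grassmann--Pl\"ucker $\bC$-functions, i.e., with the subvariety of $\bP^{2^n + n(n-1)2^{n-2}-1}(\bC)$ cut out by the restricted Pl\"ucker relations \ref{item:rGP2} subject to the constraints \ref{item:rGP3} and \ref{item:rGP4}. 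Thus the content of the corollary is that this composite bijection coincides, geometrically, with $V \mapsto [p'(V)]$.

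The central step is to check that $V \mapsto [w(V)] \mapsto [\varphi]$ agrees with $V \mapsto [p'(V)]$. On the standard affine chart where $V$ is the row span of $(\mathbf{I}_n \mid \mathbf{M})$ with $\mathbf{M}$ skew-symmetric, Cayley's identities (cf.~Remark~\ref{rmk: Cayley}) express each Pl\"ucker coordinate $p(V)(I)$ with $I \in \cT_n \cup \cA_n$ as a signed degree-$2$ monomial in the Pfaffians $w(V)(\bullet) = \pf(\mathbf{M}_\bullet)$: a principal Pfaffian squared when $I \in \cT_n$, and the product of a principal and an almost-principal Pfaffian when $I \in \cA_n$. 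These are exactly the formulas driving the bijection of Theorem~\ref{thm:main}, so $[p'(V)]$ equals the restricted Grassmann--Pl\"ucker class associated with $[w(V)]$. The identification extends to all of $\OG(n,2n)$ by homogeneity under change of pivot transversal.

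Given this matching, the corollary reduces to unpacking the component decomposition: $\OG^+(n,2n)$ is the component where $p(V)([n]) \neq 0$, so the transversal $[n] \in \cT_n^0$ has a nonzero coordinate, forcing $\sigma = 0$ in \ref{item:rGP3}; condition \ref{item:rGP4} then reads verbatim as the stated signed linear equation between the two almost-transversal coordinates $X_{B\setminus\skewpair{i} \cup\skewpair{j}}$ and $X_{B\cup\skewpair{i} \setminus\skewpair{j}}$, and $X_B = 0$ for $B \in \cT_n^1$. The $\OG^-(n,2n)$ case is symmetric with $\sigma = 1$. The main obstacle is tracking the signs in the Cayley-type correspondences so that the identification of bijections is exact rather than merely up to a twist; once those sign conventions are settled, the corollary follows immediately from Theorem~\ref{thm:main}.
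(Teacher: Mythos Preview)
Your proposal is correct and matches the paper's approach: the corollary is presented as an immediate consequence of Theorem~\ref{thm:main} at $F=\bC$, with the identification of $\WtoGP$ and $p'$ via Cayley's identities carried out in Appendix~\ref{sec: realizable}. One small slip: for $I\in\cA_n$ Cayley's identity expresses the almost-principal \emph{minor} $p(V)(I)$ as a product of two \emph{principal} Pfaffians (which two depends on the parity of the index set), not as involving an ``almost-principal Pfaffian,'' which is not a defined object.
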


\begin{rmk}
    When we take the Pl\"ucker embedding with a different ordering of columns, the linear relation~\eqref{eq:linear-relation} has different signs.
    For instance, the ordering $1 < 2 < \dots < n < 1^* < 2^* < \dots < n^*$ induces 
    \begin{equation*}
        p(V)(B\setminus\skewpair{i} \cup\skewpair{j})
        =
        (-1)^{1+i+j + \indicator{i\in B} + \indicator{j\in B}}
        p(V)(B\cup\skewpair{i} \setminus\skewpair{j})
    \end{equation*}
    for $V\in \OG^+(n,2n)$, $B\in \cT_n^0$, and distinct $i,j\in[n]$.
    The same holds for $V\in \OG^-(n,2n)$ and $B\in \cT_n^1$. 
\end{rmk}

\begin{rmk}
    In~\cite{Kim}, the second author introduced a \emph{restricted Grassmann--Pl\"ucker function} over $F$, which shares the same name as Definition~\ref{defn:rGP} but differs in the axioms \ref{item:rGP3} and \ref{item:rGP4}. More precisely, it is defined as a function $\varphi: \cT_n \cup \cA_n \to F$ satisfying \ref{item:rGP1} and \ref{item:rGP2} along with the following condition in place of \ref{item:rGP3} and \ref{item:rGP4}:
    \begin{enumerate}[label=\rm(rGP3$'$)]
        \item\label{item:rGP3'} For all $B \in \cT_n$ and distinct $i,j\in[n]$,
        \[
            \varphi(B\setminus\skewpair{i}\cup\skewpair{j}) = (-1)^{i+j} \varphi(B\cup\skewpair{i}\setminus\skewpair{j}).
        \]
    \end{enumerate}
    This notion generalizes the embedding of the Lagrangian Grassmannian $\text{Lag}(n,2n)$ using principal and almost-principal minors of symmetric matrices~\cite{BDKS}, under the ordering convention $1 < 2 < \dots < n < 1^* < 2^* < \dots < n^*$. 
    Although we will not use the notion in~\cite{Kim} in this paper, we refer to it as a restricted Grassmann--Pl\"ucker function of \emph{type C} to distinguish it from Definition~\ref{defn:rGP}.
\end{rmk}

\subsection{The support of a restricted Grassmann--Pl\"ucker function}\label{sec:rGP-support}
The support of a restricted Grassmann--Pl\"{u}cker function forms a subclass of antisymmetric matroids (defined and studied in \cite{Kim}). We call this subclass \emph{even antisymmetric matroids}, which are in bijection with orthogonal matroids in a natural way.

\begin{defn}[\cite{Kim}]\label{def: AM}
    An \emph{antisymmetric matroid} is a pair $M = (\ground, \cB)$ such that $\emptyset \ne \cB \subseteq \cT_n\cap \cA_n$ and the following hold:
    \begin{enumerate}
        \item For any hyper-transversal $\hyper$ and hypo-transversal $\hypo$, there are no or at least two $i \in \hyper \setminus \hypo$ such that $\hyper \setminus\{i\}$ and $\hypo \cup \{i\}$ are in $\cB$.
        \item For any transversal $T$ and distinct $i,j\in[n]$, we have $T\setminus\skewpair{i}\cup\skewpair{j} \in \cB$ if and only if $T\cup\skewpair{i}\setminus\skewpair{j} \in \cB$.
    \end{enumerate}
    Elements in $\cB$ are called \emph{bases} of $M$. A \emph{circuit} of $M$ is a minimal subset $C$ of $\ground$ such that $C$ contains at most one skew pair and $C$ is not a subset of any basis of $M$. 
\end{defn}

\begin{defn}
An antisymmetric matroid is \emph{even} if $\cB\cap \cT_n \subseteq \cT_n^\sigma$ for some $\sigma\in\{0,1\}$. 
\end{defn}

\begin{lem}
    The support of a restricted Grassmann--Pl\"ucker function over a tract forms an even antisymmetric matroid.
    Moreover, (equivalence classes of) restricted Grassmann--Pl\"ucker functions over $\bK$ are in one-to-one correspondence with even antisymmetric matroids.
\end{lem}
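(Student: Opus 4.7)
The plan is to verify each axiom of an even antisymmetric matroid from the restricted Grassmann--Pl\"ucker axioms in one direction, and then, over $\bK$, exhibit the inverse map sending $\cB$ to its indicator function.

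For the forward direction, let $\varphi$ be a restricted Grassmann--Pl\"ucker function over $F$ and set $\cB := \supp(\varphi)$. Nonemptiness is immediate from \ref{item:rGP1}. To establish condition~(1) of Definition~\ref{def: AM}, I would fix a hyper-transversal $\hyper$ and a hypo-transversal $\hypo$, and apply \ref{item:rGP2}: if exactly one index $i \in \hyper \setminus \hypo$ satisfied both $\hyper \setminus \{i\} \in \cB$ and $\hypo \cup \{i\} \in \cB$, then the sum in \ref{item:rGP2} would reduce to a single product of two elements of $F^\times$ (times a sign), hence a nonzero element of $F$, contradicting $F \cap N_F = \{0\}$. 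For condition~(2), I would apply Lemma~\ref{lem: rgp4} with $A = T \setminus \skewpair{i} \cup \skewpair{j}$: regardless of whether $T \in \cT_n^\sigma$ or $T \in \cT_n^{1-\sigma}$, one obtains $\varphi(T \setminus \skewpair{i} \cup \skewpair{j}) = \pm \varphi(T \cup \skewpair{i} \setminus \skewpair{j})$, so one almost-transversal lies in $\cB$ iff the other does. Finally, the evenness condition $\cB \cap \cT_n \subseteq \cT_n^\sigma$ is exactly \ref{item:rGP3}.

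For the bijection over $\bK$, since $\bK^\times = \{1\}$ each equivalence class is a single function, and the candidate inverse sends an even antisymmetric matroid $\cB$ to its indicator $\mathbf{1}_{\cB} : \cT_n \cup \cA_n \to \bK$. Axioms \ref{item:rGP1} and \ref{item:rGP3} for $\mathbf{1}_{\cB}$ are direct translations of nonemptiness and evenness, respectively. For \ref{item:rGP2}, note that in $\bK$ signs are trivial and $N_{\bK}$ consists of $0$ together with all sums of at least two $1$'s; hence the signed sum lies in $N_{\bK}$ iff the number of indices $i$ with $\hyper\setminus\{i\}, \hypo\cup\{i\} \in \cB$ is $0$ or at least $2$, which is condition~(1). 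Axiom \ref{item:rGP4} likewise reduces to the equality of values on the two almost-transversals, which is condition~(2). The two constructions are mutually inverse, since $\supp(\mathbf{1}_{\cB}) = \cB$ and any $\bK$-valued function equals the indicator of its support.

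The proof is largely mechanical; the main subtle point is that condition~(2) of Definition~\ref{def: AM} is required for \emph{every} transversal $T$, whereas \ref{item:rGP4} is imposed only for $T \in \cT_n^\sigma$. Lemma~\ref{lem: rgp4} is what bridges this gap, by packaging \ref{item:rGP3} and \ref{item:rGP4} into a sign relation between any two almost-transversals differing by a skew-pair swap. Once this observation is in place, every remaining step is a direct translation between the specialization of the restricted Grassmann--Pl\"ucker axioms to the Krasner hyperfield and the axioms of an even antisymmetric matroid.
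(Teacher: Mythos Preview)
Your proposal is correct and follows essentially the same approach as the paper's proof. The only cosmetic difference is in handling condition~(2) of Definition~\ref{def: AM}: the paper observes directly that quantifying over $T\in\cT_n^\sigma$ already covers every pair of almost-transversals related by a skew-pair swap, whereas you route this through Lemma~\ref{lem: rgp4}; these are the same observation in two phrasings, and your expanded treatment of the $\bK$ case simply spells out what the paper leaves as ``follows directly from the definition.''
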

\begin{proof}
By \ref{item:rGP1}, \ref{item:rGP2}, and \ref{item:rGP4}, the support of a restricted Grassmann--Pl\"ucker function forms an antisymmetric matroid. Here we can replace ``any transversal $T$'' in Def.~\ref{def: AM} with ``any $T\in \cT_n^\sigma$'' for any choice of $\sigma\in\{0,1\}$ because the definition will still cover all pairs of almost-transversals $T\setminus\skewpair{i}\cup\skewpair{j}$ and $T\cup\skewpair{i}\setminus\skewpair{j}$. Then by \ref{item:rGP3}, the antisymmetric matroid is even.

The second assertion also follows directly from the definition. 
\end{proof}

Even antisymmetric matroids are in one-to-one correspondence with orthogonal matroids, and they have identical circuit sets. This result is implicitly proved in \cite{Kim}. 
\begin{thm}[\cite{Kim}]\label{thm:support}
    Let $\cB \subseteq \cT_n^\sigma$ for some $\sigma \in \{0,1\}$.
    Let $\cB'$ be the set of almost-transversals $A$ such that either
    \begin{enumerate}
        \item $A\setminus \{i\} \cup \{j\}$ and $A\setminus \{i^*\} \cup \{j^*\}$ are in $\cB$, or 
        \item $A\setminus \{i^*\} \cup \{j\}$ and $A\setminus \{i\} \cup \{j^*\}$ are in $\cB$,
    \end{enumerate}
    where $i,j\in[n]$ are the elements such that $\skewpair{i} \subseteq A$ and $\skewpair{j} \cap A = \emptyset$.
    Then the map \[M=(\ground, \cB)\mapsto M'=(\ground, \cB\cup \cB')\] is bijection between orthogonal matroids and even antisymmetric matroids on $\ground$. Moreover, the orthogonal matroid $M$ and even antisymmetric matroid $M'$ have the same circuit set. 
\end{thm}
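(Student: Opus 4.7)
The plan is to verify that the map is well-defined (producing an even antisymmetric matroid from an orthogonal matroid), construct an explicit inverse, and then check that the circuit sets coincide. I would organize the proof around the two axioms of Definition~\ref{def: AM} together with the circuit comparison.

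First, I would check that for an orthogonal matroid $M=(\ground,\cB)$ with $\cB \subseteq \cT_n^\sigma$, the pair $M'=(\ground, \cB \cup \cB')$ is an even antisymmetric matroid. Evenness is immediate. Axiom~(2) of Definition~\ref{def: AM} holds by construction: for a transversal $T$ and distinct $i,j\in[n]$, the almost-transversals $T\setminus\skewpair{i}\cup\skewpair{j}$ and $T\cup\skewpair{i}\setminus\skewpair{j}$ share the same two transversal completions (those filling in $\{i,i^*\}$ in one of the two ways), so membership in $\cB'$ for one is equivalent to membership for the other. Axiom~(1) is the substantive point: I would take a hyper-transversal $\hyper$ and hypo-transversal $\hypo$, consider cases according to whether each contains a skew pair (so whether $\hyper$ is a transversal plus an element or an almost-transversal plus an element, and similarly for $\hypo$), and in each case translate the desired exchange into a symmetric exchange in $\cB$. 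For example, if $\hyper = B_1 \cup \{x\}$ and $\hypo = B_2 \setminus \{y\}$ with $B_1,B_2 \in \cB$, the required pairs of exchangeable indices in $\hyper\setminus\hypo$ are supplied by the strong exchange axiom of $M$ applied to $B_1$ and $B_2$; for the mixed cases involving almost-transversals, one replaces the relevant almost-transversal basis by one of its two transversal lifts guaranteed by the definition of~$\cB'$ before invoking the exchange axiom.

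Second, I would construct the inverse. Given an even antisymmetric matroid $\widetilde{M}=(\ground,\widetilde{\cB})$, define $\cB := \widetilde{\cB} \cap \cT_n$ and verify that $(\ground,\cB)$ satisfies the strong exchange axiom of Definition~\ref{def:orthogonal-matroid}. Given $B_1, B_2 \in \cB$ and $\skewpair{i} \subseteq B_1 \symdiff B_2$, one applies axiom~(1) of Definition~\ref{def: AM} to the hyper-transversal $B_1 \cup \{i'\}$ (where $\{i'\} = \{i,i^*\}\cap B_2$) and an appropriate hypo-transversal built from $B_2$; the two or more exchange elements produced include the needed $\skewpair{j}$. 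Then a direct inspection shows $\widetilde{\cB} \setminus \cT_n = \cB'$: the forward inclusion uses axiom~(1) (each almost-transversal basis produces its two pairs of transversal neighbors via exchange), and the reverse inclusion uses axiom~(2) (any almost-transversal with the required neighbors must itself lie in $\widetilde\cB$).

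Third, for the circuit equality, I would argue that a subtransversal $C$ is contained in some basis of $M$ iff it is contained in some basis of $M'$. One direction is trivial since $\cB \subseteq \cB\cup\cB'$. Conversely, if $C \subseteq A$ for some $A\in\cB'$ with $\skewpair{i}\subseteq A$, then since $C$ is a subtransversal it meets $\{i,i^*\}$ in at most one element; using the two (or four) transversal bases of $M$ that the definition of $\cB'$ provides as lifts of $A$, at least one of them contains $C$. Hence $C$ fails to lie in any basis of $M$ iff it fails to lie in any basis of $M'$, and minimality gives the equality of circuit sets.

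The main obstacle will be the case analysis for axiom~(1) of Definition~\ref{def: AM}, since the hyper- and hypo-transversals can independently contain or miss a skew pair, and the almost-transversal bases in $\cB'$ must be ``unfolded'' back to transversal bases in $\cB$ before the orthogonal-matroid exchange axiom can be applied; keeping track of which element of $\hyper\setminus\hypo$ corresponds to which element of the symmetric difference $B_1\symdiff B_2$ in each case is the bookkeeping that requires care.
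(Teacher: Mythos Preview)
Your approach differs substantially from the paper's: the paper does not give a self-contained proof but defers all three parts (well-definedness, surjectivity, circuit equality) to results in \cite{Kim}, invoking in particular the uniqueness of the almost-transversal completion of a symmetric matroid. Your plan to verify everything directly is more informative and is essentially workable, but two steps in your sketch of the inverse construction are misattributed.

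For the forward inclusion $\widetilde{\cB}\cap\cA_n \subseteq \cB'$, axiom~(1) (via Lemma~\ref{lem:hypo/hyper-transversals}) only tells you that for an almost-transversal basis $A$ with $\skewpair{i}\subseteq A$ and $\skewpair{j}\cap A=\emptyset$, \emph{some} element of $\{A\setminus\{i\}\cup\{j\},\,A\setminus\{i^*\}\cup\{j\}\}$ and \emph{some} element of $\{A\setminus\{i\}\cup\{j^*\},\,A\setminus\{i^*\}\cup\{j^*\}\}$ lie in $\widetilde\cB$. To force these into one of the two matched pairs defining $\cB'$, you must use evenness: the two ``wrong'' pairings consist of transversals in opposite parity classes $\cT_n^\sigma$ and $\cT_n^{1-\sigma}$, so they cannot both be bases.

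For the reverse inclusion $\cB'\subseteq\widetilde{\cB}$, axiom~(2) alone is not enough: it only asserts that $A$ and $A\symdiff\skewpairs{i}{j}$ have the same membership status, without saying either is a basis. The argument that works is to take, say, $B_1=A\setminus\{i^*\}\cup\{j\}$ and $B_2=A\setminus\{i\}\cup\{j^*\}$ in $\cB$, set $\hyper=A\cup\{j\}$ and $\hypo=A\setminus\skewpair{i}\cup\{j^*\}$, and apply axiom~(1) to $\hyper\setminus\hypo=\{i,i^*,j\}$: the element $i^*$ always yields a valid exchange (via $B_1$ and $B_2$), the element $i$ never does (the resulting transversals lie in $\cT_n^{1-\sigma}$), so axiom~(1) forces $j$ to work as well, which says exactly $A\in\widetilde\cB$ (using axiom~(2) once for the companion almost-transversal). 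So both axioms and evenness are needed here.

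Your circuit argument and your identification of the axiom~(1) case analysis as the main burden are both correct.
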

\begin{proof}
For any orthogonal matroid $M=(\ground, \cB)$, \cite[Theorem 4.13]{Kim} (together with its proof) says that $(\ground, \cB\cup \cB')$ is an antisymmetric matroid and there is no other subset $\cB''$ of almost-transversals such that $(\ground, \cB\cup \cB'')$ is also an antisymmetric matroid. Because $M$ is an orthogonal matroid, $M'$ must be even. 

The map is clearly injective. To show the surjectivity, we start with an even antisymmetric matroid $M'=(\ground, \cB\cup \cB')$, where $\cB \subseteq \cT_n^\sigma$ for some $\sigma \in \{0,1\}$ and $\cB' \subseteq \cA_n$. By \cite[Proposition 4.13]{Kim}, $M=(\ground, \cB)$ is a symmetric matroid, i.e., for any $B_1,B_2\in\cB$ and any $x\in B_1\setminus B_2$, there exists $y\in B_1\setminus B_2$ such that $(B_1\symdiff\skewpair{x})\symdiff\skewpair{y}\in\cB$. Since $\cB \subseteq \cT_n^\sigma$, we have $\skewpair{x}\neq\skewpair{y}$ and hence $M$ is an orthogonal matroid. Then the uniqueness part of \cite[Theorem 4.13]{Kim} guarantees that $M\mapsto M'$.  

It is shown in the proof of \cite[Theorem 4.13]{Kim} that $M$ and $M'$ have the same circuit set. 
\end{proof}

\begin{rmk}\label{rmk: type D and type C}
Although orthogonal matroids can be identified with even antisymmetric matroid, the orthogonal $F$-matroids $M$ defined in \cite{JK} are in general different from the even antisymmetric $F$-matroids in \cite{Kim}. More precisely, we claim that their $F$-circuit sets are the same only when $1=-1$ in the tract $F$ or the underlying orthogonal matroid of $M$ is the lift of a matroid. This is why we need to replace \ref{item:rGP3'} in \cite{Kim} with \ref{item:rGP3} and \ref{item:rGP4}. As a consequence, there are two different notions of even antisymmetric matroids over tracts, one is for type C, and the other is for type D. 
\end{rmk}

We will need the following technical lemmas later. The first one follows from {\cite[Lemma~3.2]{Kim}}.
\begin{lem}\label{lem:hypo/hyper-transversals}
    Let $M$ be an antisymmetric matroid. The following holds:
    \begin{enumerate}
        \item For any hyper-transversal $\hyper$ and the skew pair $\skewpair{i}$ with $\skewpair{i} \subseteq \hyper$, if $\hyper \setminus\{i'\}$ is a basis for some $i'\in \hyper$, then $\hyper \setminus\{i\}$ or $\hyper \setminus\{i^*\}$ is a basis.
        \item For any hypo-transversal $\hypo$ and the skew pair $\skewpair{j}$ with $\skewpair{j}\cap \hypo = \emptyset$, if $\hypo \cup\{j'\}$ is a basis for some $j'\notin \hypo$, then $\hypo \cup\{j\}$ or $\hypo \cup\{j^*\}$ is a basis.
    \end{enumerate}
\end{lem}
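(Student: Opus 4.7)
My plan is to prove both parts by the same two-step strategy using the two axioms of antisymmetric matroid in Definition \ref{def: AM}: first apply the skew-pair-swap axiom (condition (2)) to upgrade the given basis to a second one, then apply the exchange-type axiom (condition (1)) to extract the desired basis from its ``at least two valid exchanges'' clause. In each part the case $i' \in \{i, i^*\}$ (respectively $j' \in \{j, j^*\}$) is immediate, so I focus on the remaining case.

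For part (1), assume $i' \in \hyper \setminus \{i, i^*\}$ and let $\ol{i'}$ denote the element of $\{i', (i')^*\} \cap [n]$. I would take $T := (\hyper \setminus \{i', i^*\}) \cup \{(i')^*\}$, verify that $T$ is a transversal satisfying $T \cup \skewpair{i} \setminus \skewpair{\ol{i'}} = \hyper \setminus \{i'\}$ and $T \setminus \skewpair{i} \cup \skewpair{\ol{i'}} = (\hyper \setminus \{i, i^*\}) \cup \{(i')^*\}$, and apply condition (2) to promote $\hyper \setminus \{i'\} \in \cB$ to $(\hyper \setminus \{i, i^*\}) \cup \{(i')^*\} \in \cB$. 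Next, take the hypo-transversal $\hypo := (\hyper \setminus \{i, i^*, i'\}) \cup \{(i')^*\}$, so that $\hyper \setminus \hypo = \{i, i^*, i'\}$. The index $i'' = i'$ realizes a valid exchange pair, and condition (1) then forces a second valid exchange element in $\{i, i^*, i'\}$, which must lie in $\{i, i^*\}$; hence $\hyper \setminus \{i\} \in \cB$ or $\hyper \setminus \{i^*\} \in \cB$. This recovers \cite[Lemma~3.2]{Kim}.

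Part (2) follows by the mirror argument. Assume $j' \notin \{j, j^*\}$, so that $(j')^* \in \hypo$ and $\hypo \cup \{j'\}$ is an almost-transversal with skew pair $\{j', (j')^*\}$. Using the transversal $T := (\hypo \setminus \{(j')^*\}) \cup \{j', j\}$, one verifies $T \cup \skewpair{j'} \setminus \skewpair{j} = \hypo \cup \{j'\}$ and $T \setminus \skewpair{j'} \cup \skewpair{j} = (\hypo \setminus \{(j')^*\}) \cup \{j, j^*\}$, so condition (2) produces $(\hypo \setminus \{(j')^*\}) \cup \{j, j^*\} \in \cB$. Then condition (1) applied to the hyper-transversal $\hypo \cup \{j, j^*\}$ and the hypo-transversal $(\hypo \setminus \{(j')^*\}) \cup \{j'\}$---whose symmetric difference is $\{j, j^*, (j')^*\}$---has $(j')^*$ as a guaranteed valid exchange, so the axiom forces one of $\{j, j^*\}$ to work as well, yielding $\hypo \cup \{j\} \in \cB$ or $\hypo \cup \{j^*\} \in \cB$. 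The only nontrivial step in either part is identifying the auxiliary transversal $T$ and (hypo-)transversal so that the two axioms chain correctly; once these are chosen, the verifications reduce to straightforward size and skew-pair bookkeeping.
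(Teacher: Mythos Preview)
Your proposal is correct and follows exactly the approach that the paper defers to via the citation of \cite[Lemma~3.2]{Kim}: use axiom~(2) of Definition~\ref{def: AM} to manufacture a companion basis, then feed a well-chosen hyper/hypo pair into axiom~(1) so that the guaranteed second valid exchange lands in the desired skew pair. One small wording slip: in part~(2) you write ``whose symmetric difference is $\{j, j^*, (j')^*\}$'', but the relevant index set in axiom~(1) is $\hyper\setminus\hypo$, not $\hyper\symdiff\hypo$; the set you list is indeed $(\hypo\cup\{j,j^*\})\setminus((\hypo\setminus\{(j')^*\})\cup\{j'\})$, so the argument stands unchanged.
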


The next lemma is about the fundamental circuits.  

\begin{lem}\label{lem:two circuits}
Let $M=(\ground, \cB)$ and $M'=(\ground, \cB\cup \cB')$ be a pair of an orthogonal matroid and an even antisymmetric matroid as in Theorem~\ref{thm:support}. Let $T\in\cT_n$ and $i\in T$ be such that $T\symdiff\skewpair{i}\in\cB$, and let $S=T\cup\{i^*\}$.

\begin{enumerate}
    \item\cite{BMP2003} For $M$, there exists a unique circuit $C\subseteq T$ and \[C=\{j\in T: T\symdiff\skewpair{j}\in\cB\}.\]
    \item\cite[Lemma 3.7]{Kim} For $M'$, there is a unique circuit $C'\subseteq S$ and \[C'=\{j\in S: S\setminus\{j\}\in\cB\}.\]
    \item $C=C'$.
\end{enumerate}
\end{lem}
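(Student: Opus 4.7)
Parts (1) and (2) are cited from the literature, so the task is to show (3), namely $C = C'$. The strategy is to describe $C'$ element by element: for each $j \in S = T \cup \{i^*\}$, I would check via Theorem~\ref{thm:support} whether $S \setminus \{j\}$ is a basis of $M'$. The whole argument is driven by a parity observation: since $T \symdiff \skewpair{i} \in \cB \subseteq \cT_n^{\sigma}$, any transversal that differs from $T \symdiff \skewpair{i}$ by an odd number of skew-pair flips lies in $\cT_n^{1-\sigma}$ and therefore cannot belong to $\cB$. In particular, both $T$ and $T\symdiff\skewpair{i}\symdiff\skewpair{j}$ (for $j \in \ground \setminus \skewpair{i}$) are excluded from $\cB$.

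I would first dispose of the two entries of $S$ that yield transversals. For $j = i^*$, the set $S\setminus\{i^*\} = T$ has the wrong parity, so $i^* \notin C'$. For $j = i$, the set $S\setminus\{i\} = T \symdiff \skewpair{i}$ lies in $\cB$ by hypothesis, so $i \in C' \cap C$. The interesting case is $j \in T\setminus\{i\}$; here $A := S\setminus\{j\}$ is an almost-transversal with $\skewpair{i} \subseteq A$ and $\skewpair{\ol{j}} \cap A = \emptyset$, where $\ol{j}$ denotes the element of $\{j, j^*\} \cap [n]$.

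For this case I would apply Theorem~\ref{thm:support} to $A$ with the distinguished pair indices $i$ and $\ol{j}$. A direct expansion of the four transversal-completions of $A$ (obtained by swapping $\{i,i^*\}$ with $\{\ol{j},\ol{j}^*\}$) identifies them with $T$, $T\symdiff\skewpair{i}$, $T\symdiff\skewpair{j}$, and $T\symdiff\skewpair{i}\symdiff\skewpair{j}$. Theorem~\ref{thm:support} says $A\in\cB'$ iff one of two prescribed pairs of these completions lies in $\cB$. The parity observation rules out the pair $\{T,\, T\symdiff\skewpair{i}\symdiff\skewpair{j}\}$, so $A\in\cB'$ becomes equivalent to the other pair $\{T\symdiff\skewpair{i},\, T\symdiff\skewpair{j}\}$ lying in $\cB$. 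The first member is in $\cB$ by hypothesis, so this collapses to $T\symdiff\skewpair{j}\in\cB$, i.e., $j\in C$. Combined with the easy cases, this yields $C' = \{j\in T : T\symdiff\skewpair{j}\in\cB\} = C$.

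The main obstacle I anticipate is the bookkeeping in the two sub-cases $j\in[n]$ and $j\in[n]^*$: the roles of Cases (1) and (2) of Theorem~\ref{thm:support} actually swap between these sub-cases, so some care is needed to verify that the parity argument uniformly kills the forbidden pair $\{T,\, T\symdiff\skewpair{i}\symdiff\skewpair{j}\}$ in both. Once this is checked, the rest of the argument reduces to direct substitution.
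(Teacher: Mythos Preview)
Your argument is correct, but it takes a different route from the paper. The paper's proof is a two-line application of the last sentence of Theorem~\ref{thm:support}: since $M$ and $M'$ have the \emph{same} circuit set, $C$ is already a circuit of $M'$; as $C \subseteq T \subseteq S$ and $C'$ is the unique circuit of $M'$ contained in $S$, the uniqueness in part~(2) forces $C = C'$. Your approach bypasses the ``same circuit set'' statement entirely and instead verifies $C = C'$ element by element via the basis-completion description of $\cB'$ in Theorem~\ref{thm:support}, together with the parity constraint $\cB \subseteq \cT_n^\sigma$. This is more laborious (and, as you note, requires tracking which of the two cases in Theorem~\ref{thm:support} is active depending on whether $j\in[n]$ or $j\in[n]^*$), but it has the virtue of being self-contained: it does not rely on the nontrivial fact, imported from~\cite{Kim}, that the circuits of $M$ and $M'$ coincide. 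In effect your computation reproves that fact in the special case of fundamental circuits, which is all that is needed here.
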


\begin{proof}
By Theorem~\ref{thm:support}, $C$ is also a circuit of $M'$. Since $C\subseteq S$, we have $C=C'$.
\end{proof}

\begin{defn}\label{def: fundamental circuit}
The circuit $C$ in Lemma~\ref{lem:two circuits} is called the \emph{fundamental circuit} of the orthogonal matroid $M$ with respect to the basis $T\symdiff\skewpair{i}$ and $i$. The circuit $C'$ is called the \emph{fundamental circuit} of the even antisymmetric matroid $M'$ with respect to the basis $T\symdiff\skewpair{i}$ and $i$. 
\end{defn} 
The fundamental circuits of an antisymmetric matroid (not necessarily even) are defined in \cite{Kim}. Our definition is a special case. 
For matroids and orthogonal matroids, every circuit is a fundamental circuit. The counterpart for antisymmetric matroids also holds \cite[Lemma 3.6]{Kim}. Here, we only state the necessary part as a lemma. 

\begin{lem}\label{lem: all fundamental}
Every circuit of an even antisymmetric matroid is a fundamental circuit.    
\end{lem}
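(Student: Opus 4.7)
The plan is to reduce the statement to the analogous well-known fact about orthogonal matroids, and then transfer it across the bijection between even antisymmetric matroids and orthogonal matroids given by Theorem~\ref{thm:support}. So let $C$ be a circuit of an even antisymmetric matroid $M' = (\ground, \cB \cup \cB')$, and let $M = (\ground, \cB)$ be the corresponding orthogonal matroid under Theorem~\ref{thm:support}. Since Theorem~\ref{thm:support} tells us that $M$ and $M'$ share the same circuit set, $C$ is also a circuit of $M$.

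The main step is then to exhibit $C$ as a fundamental circuit of the orthogonal matroid $M$. Pick any $e \in C$. Since $C$ is a circuit, $C \setminus \{e\}$ is a subtransversal contained in some basis $B \in \cB$; since $C$ itself is not contained in any basis, we must have $e^* \in B$ rather than $e \in B$. Now set $T := B \symdiff \skewpair{e}$, a transversal that contains $C$ and satisfies $T \symdiff \skewpair{e} = B \in \cB$. Lemma~\ref{lem:two circuits}(1) then applies to $T$ and $e$, giving a unique circuit contained in $T$; since $C$ is a circuit contained in $T$, this circuit must be $C$ itself. Hence $C$ is the fundamental circuit of $M$ with respect to the basis $B$ and the element $e$, in the sense of Definition~\ref{def: fundamental circuit}.

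Finally, Lemma~\ref{lem:two circuits}(3) identifies this $C$ with the fundamental circuit $C'$ of $M'$ with respect to the same basis $B$ and element $e$. Therefore $C$ is a fundamental circuit of $M'$, as claimed. The only step with real content is the verification that every circuit of an orthogonal matroid is fundamental; everything else amounts to chasing the correspondences already set up in Theorem~\ref{thm:support} and Lemma~\ref{lem:two circuits}.
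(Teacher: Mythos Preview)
Your argument is correct. The paper does not actually prove this lemma; it simply records it as the even special case of \cite[Lemma~3.6]{Kim}, which handles all antisymmetric matroids. Your route is genuinely different and more internal to the paper: instead of invoking the general antisymmetric result, you pass along the bijection of Theorem~\ref{thm:support} to the associated orthogonal matroid, where the desired statement is essentially the uniqueness assertion in Lemma~\ref{lem:two circuits}(1) (itself cited from~\cite{BMP2003}), and then transfer back via Lemma~\ref{lem:two circuits}(3). The only substantive step---that $C\setminus\{e\}$ extends to a basis of $M$---is immediate from the minimality clause in Definition~\ref{def:orthogonal-matroid}. Your approach has the advantage of being self-contained within the machinery the paper already assembles, at the harmless cost of applying only in the even case, which is all that is needed here.
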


\subsection{Proof of Theorem~\ref{thm:main}}\label{sec:proof-main-theorem}

Theorem~\ref{thm:main} says that orthogonal $F$-matroids (defined as classes of Wick $F$-functions), classes of restricted Grassmann--Pl\"ucker $F$-functions, and orthogonal $F$-signatures are equivalent; see the diagram below.  
\[\begin{tikzcd}
	{\text{Orthogonal }F\text{-matroids }} && {} && {\text{Orthogonal }F\text{-signatures }} \\
	\\
	&& {\text{rGP }F\text{-functions }}
	\arrow["\WtoO\text{ in Thm. }\ref{thm:JK}"', from=1-1, to=1-5]
	\arrow["\WtoGP\text{ in Prop. }\ref{prop:WtoGP}"', from=1-1, to=3-3]
	\arrow["\OtoW:=(\WtoO)^{-1}"', curve={height=12pt}, from=1-5, to=1-1]
	\arrow["\GPtoO\text{ in Prop. }\ref{prop:GPtoO}"', from=3-3, to=1-5]
\end{tikzcd}\]
By \cite{JK}, we already have the bijection $\WtoO$. In \S\ref{sec:WtoGP} and \S\ref{sec:GPtoO}, we will define $\WtoGP$ and $\GPtoO$, respectively. In \S\ref{sec:2identitymaps}, we show that both $\GPtoO \circ \WtoGP \circ \OtoW$ and $\WtoGP \circ \OtoW \circ \GPtoO$ are identity maps, which implies Theorem~\ref{thm:main}.

We will often abbreviate Grassmann--Pl\"ucker as GP. Also recall that for $i\in \ground$, we write $\ol{i}$ for the unique element in $\{i,i^*\}\cap [n]$. 
We will be using the order
\[1 < 1^* < 2 < 2^* < \dots < n < n^*.\]

\subsubsection{Wick functions to restricted GP functions}\label{sec:WtoGP}

Let $\psi:\cT_n \to F$ be a Wick function. Recall that the support of $\psi$ is an orthogonal matroid, and hence there exists  $\sigma\in\{0,1\}$ such that all the bases belong to $\cT_n^\sigma$. We define a function $\varphi: \cT_n \cup \cA_n \to F$ as follows. 
Firstly, \[
    \varphi(B) = \psi(B)^2\text{ for each }B\in \cT_n.
\]
Then for each $A\in\cA_n$, 
\begin{equation}\label{eq:GPA}
    \varphi(A) = (-1)^{\smaller{B\cap[n]}{i} + \smaller{B\cap[n]}{j} + \indicator{i\in B} +1}\psi(B)\psi(B\symdiff\skewpairs{i}{j}),
\end{equation}
where $i,j\in [n]$ are the elements such that $\skewpair{i}\cap A=\emptyset$ and $\skewpair{j}\subseteq A$, and $B$ is one of the two transversals in $\cT_n^\sigma$ such that $B \setminus \skewpair{i} \cup \skewpair{j}=A$. The other transversal is $B'=B\symdiff\skewpairs{i}{j}$, and our formula \eqref{eq:GPA} does not depend on the choice of $B$. Indeed, we have
\begin{align*}
\smaller{B\cap[n]}{i}-\smaller{B'\cap[n]}{i}&\equiv \smaller{(B\symdiff B')\cap[n]}{i}\equiv\indicator{j<i},\\
\smaller{B\cap[n]}{j}-\smaller{B'\cap[n]}{j}&\equiv \smaller{(B\symdiff B')\cap[n]}{j}\equiv\indicator{i<j},\\
\indicator{i\in B}-\indicator{i\in B'}&\equiv 1,\\
0& \equiv \indicator{j<i}+\indicator{i<j}+1 \pmod{2}.
\end{align*}

It is easy to see that $\varphi$ satisfies \ref{item:rGP1} and \ref{item:rGP3}. 

\begin{rmk}
By our definition of $\varphi$, the support of $\psi$ corresponds to the support of $\varphi$ via the bijection in Theorem~\ref{thm:support}.  
\end{rmk}

We will show in Lemma~\ref{lem:rGP4} and Lemma~\ref{lem:rGP2} that $\varphi$ also satisfies \ref{item:rGP2} and \ref{item:rGP4}. Hence, we obtain the following map. 

\begin{prop}\label{prop:WtoGP}
    The map defined by\[\WtoGP (\eqcls{\psi}) := \eqcls{\varphi}\]sends an orthogonal $F$-matroid $\eqcls{\psi}$ to an equivalence class of restricted Grassmann--Pl\"{u}cker $F$-functions.
\end{prop}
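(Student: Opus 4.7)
The proposition reduces to checking that the function $\varphi$ defined around equation~\eqref{eq:GPA} satisfies axioms \ref{item:rGP1}--\ref{item:rGP4}, together with the observation that replacing $\psi$ by $c\psi$ (for $c\in F^\times$) replaces $\varphi$ by $c^2\varphi$. The scalar compatibility is immediate from the formulas, so $\eqcls{\varphi}$ depends only on $\eqcls{\psi}$. Axioms \ref{item:rGP1} and \ref{item:rGP3} are also direct: since $\varphi|_{\cT_n}=\psi^2$ and $\supp(\psi)\subseteq\cT_n^\sigma$, nonvanishing and the required parity are inherited from $\psi$. The substantive content is \ref{item:rGP4} and \ref{item:rGP2}, which I propose to handle in Lemmas~\ref{lem:rGP4} and~\ref{lem:rGP2}, respectively.

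For \ref{item:rGP4}, the plan is to expand $\varphi$ on both almost-transversals $A_1:=B\setminus\skewpair{i}\cup\skewpair{j}$ and $A_2:=B\cup\skewpair{i}\setminus\skewpair{j}$ using \eqref{eq:GPA}. For $A_1$ I would use the representative $B$ directly; for $A_2$ the roles of $i$ and $j$ are interchanged ($\skewpair{j}\cap A_2=\emptyset$ and $\skewpair{i}\subseteq A_2$), so the natural representative is $B':=B\symdiff\skewpairs{i}{j}$. A short check gives $B'\setminus\skewpair{j}\cup\skewpair{i}=A_2$ and $B'\symdiff\skewpairs{j}{i}=B$, hence both formulas yield the same $\psi$-product $\psi(B)\psi(B')$. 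The ratio of sign exponents then reduces, using the four mod-$2$ identities listed right after~\eqref{eq:GPA} together with $\indicator{j\in B'}=1-\indicator{j\in B}$, to exactly $\indicator{i\in B}+\indicator{j\in B}$, which is the desired sign.

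For \ref{item:rGP2}, the main obstacle, fix a hyper-transversal $\hyper$ with unique skew pair $\skewpair{x}$ and a hypo-transversal $\hypo$ with missing pair $\{y,y^*\}$. Each summand $\varphi(\hyper\setminus\{i\})\varphi(\hypo\cup\{i\})$ expands via the definition into a product of four values of $\psi$. The strategy is to factor out a $\psi$-product common to every summand and then recognize the residual quadratic-in-$\psi$ identity as a Wick relation for $\psi$ applied to a suitable pair $(T,T')$ of transversals built from $\hyper$ and $\hypo$. I anticipate a case split according to whether $x\in\{y,y^*\}$: in the first case, $T:=\hyper\setminus\{x\}\in\cT_n$ and $T':=\hypo\cup\{x\}\in\cT_n$ are the canonical choices; in the second case ($\skewpair{x}\cap\{y,y^*\}=\emptyset$), both transversal/transversal and almost-transversal/almost-transversal summands occur simultaneously, and an auxiliary common factor of the form $\psi(B)\psi(B\symdiff\skewpairs{x}{y})$ must be absorbed so that the two kinds of summands combine into a single Wick relation.

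The principal technical difficulty will be sign bookkeeping in the second case of \ref{item:rGP2}: the sign $(-1)^{\abs{\hyper\symdiff\hypo<i}}$ on the restricted GP side must be reconciled with the sign $(-1)^{\abs{(T\symdiff T')\cap[n]<i}}$ on the Wick side, across summands whose $\varphi$-evaluations use different choices of representative transversal. This is where the well-definedness of~\eqref{eq:GPA}, together with the mod-$2$ parity identities already exploited for \ref{item:rGP4}, must be invoked repeatedly. Once both lemmas are established the proposition follows, because the map $[\psi]\mapsto[\varphi]$ is then well-defined and lands in the set of equivalence classes of restricted Grassmann--Pl\"ucker $F$-functions.
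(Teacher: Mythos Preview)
Your reduction and your plan for \ref{item:rGP4} are correct and essentially match the paper (the paper uses $B$ itself, with the roles of $i$ and $j$ swapped, as the representative for $A_2$, but your choice of $B'$ works equally well). For \ref{item:rGP2}, the high-level strategy---expand each $\varphi$-term as a product of $\psi$-values, factor out a common pair, and identify the residual with a Wick relation for a suitable $(T,T')$---is exactly the paper's. The execution differs in one important respect: rather than splitting into cases, the paper first proves a unifying formula (equation~\eqref{eq:WtoGP}) valid for any $T\in\cT_n^{1-\sigma}$ and any $i,j\in T$,
\[
\varphi(T\setminus\{i\}\cup\{j^*\}) = \pm\,\psi(T\symdiff\skewpair{i})\,\psi(T\symdiff\skewpair{j}),
\]
which treats the transversal case ($i=j$) and the almost-transversal case ($i\ne j$) simultaneously. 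Choosing $T:=\hyper\setminus\{i\}$ and $T':=\hypo\cup\{j^*\}$ to lie in $\cT_n^{1-\sigma}$ (so that $\varphi(T)=\varphi(T')=0$), every surviving summand then factors uniformly as $\psi(T\symdiff\skewpair{i})\,\psi(T'\symdiff\skewpair{j})$ times a Wick-relation term, and no case split on whether $\skewpair{x}=\skewpair{y}$ is needed.

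Your anticipated case analysis contains two slips that would trip you up if you tried to execute it as written. First, in the case $\skewpair{x}\ne\skewpair{y}$ there are no ``transversal/transversal'' summands: for $k\in\skewpair{x}$ the pair $(\hyper\setminus\{k\},\hypo\cup\{k\})$ is transversal/almost-transversal, for the unique $k\in\skewpair{y}\cap\hyper$ it is almost/transversal, and for all other $k$ it is almost/almost. Second, the common factor is not of the shape $\psi(B)\psi(B\symdiff\skewpairs{x}{y})$ built from a single $B$; one factor comes from $\hyper$ (namely $\psi(T\symdiff\skewpair{i})$) and the other from $\hypo$ (namely $\psi(T'\symdiff\skewpair{j})$), and these need not be related by a single swap. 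Neither slip is fatal---your route can be salvaged with more care---but the unified formula~\eqref{eq:WtoGP} is precisely the device that makes the sign bookkeeping tractable and avoids these pitfalls.
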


\begin{rmk}\label{rmk: Cayley}
    When $F$ is a field, the map $\WtoGP$ is derived from Cayley's identities \cite[Equation~(6.3)]{Knuth}: for an $n\times n$ skew-symmetric matrix $\mathbf{A}$,
    \[
        \det(\mathbf{A}) = \pf(\mathbf{A})^2 
        \ \text{ and } \
        \det(\mathbf{A}_j^i)
        =
        \begin{cases}
            (-1)^{\indicator{i>j}} \pf(\mathbf{A}_{ij}^{ij}) \pf(\mathbf{A}) & \text{$n$ is even}, \\
            \pf(\mathbf{A}_i^i) \pf(\mathbf{A}_j^j) & \text{$n$ is odd},
        \end{cases}
    \]
    where $\mathbf{A}_j^i$ is the submatrix obtained by removing the $i$-th column and $j$-th row.
    See Appendix~\ref{sec: realizable} for more discussions.
\end{rmk}

\begin{lem}\label{lem:rGP4}
    The function $\varphi$ satisfies \ref{item:rGP4}.
\end{lem}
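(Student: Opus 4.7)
The plan is to prove \ref{item:rGP4} by applying the defining formula \eqref{eq:GPA} to both almost-transversals
\[
A_1 := B\setminus\skewpair{i}\cup\skewpair{j} \quad\text{and}\quad A_2 := B\cup\skewpair{i}\setminus\skewpair{j},
\]
using the \emph{same} auxiliary transversal $B\in\cT_n^\sigma$ in each case, and then observing that the two resulting expressions differ only in one indicator.

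First I would verify that $B$ is a valid choice of auxiliary transversal for both $A_1$ and $A_2$. For $A_1$, the skew pair contained in $A_1$ is $\skewpair{j}$ and the one disjoint from $A_1$ is $\skewpair{i}$, so the formula's dummy indices coincide with the given $i,j$, and $B\setminus\skewpair{i}\cup\skewpair{j}=A_1$ directly. For $A_2$, the skew pair contained in $A_2$ is $\skewpair{i}$ and the one disjoint from $A_2$ is $\skewpair{j}$, so the formula's dummy indices are $(j,i)$ rather than $(i,j)$; the identity $B\setminus\skewpair{j}\cup\skewpair{i}=B\cup\skewpair{i}\setminus\skewpair{j}=A_2$ (both sides remove the unique element of $B\cap\skewpair{j}$ and add the unique element of $\skewpair{i}\setminus B$) confirms that $B$ works.

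Next I would write out both expressions. From \eqref{eq:GPA} directly,
\[
\varphi(A_1)=(-1)^{\smaller{B\cap[n]}{i}+\smaller{B\cap[n]}{j}+\indicator{i\in B}+1}\psi(B)\psi(B\symdiff\skewpairs{i}{j}),
\]
and, after swapping the roles of $i$ and $j$ in the formula for $A_2$,
\[
\varphi(A_2)=(-1)^{\smaller{B\cap[n]}{j}+\smaller{B\cap[n]}{i}+\indicator{j\in B}+1}\psi(B)\psi(B\symdiff\skewpairs{j}{i}).
\]
Since $\skewpairs{i}{j}=\skewpairs{j}{i}$, the $\psi$ factors agree, and the only discrepancy in the exponents is $\indicator{i\in B}$ versus $\indicator{j\in B}$. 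Because $(-1)^{a-b}=(-1)^{a+b}$ for $a,b\in\{0,1\}$, this yields
\[
\varphi(A_1)=(-1)^{\indicator{i\in B}+\indicator{j\in B}}\varphi(A_2),
\]
which is exactly \ref{item:rGP4}.

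The argument is essentially sign bookkeeping, so there is no serious obstacle. The only subtle point is realizing that the same $B$ can serve as the auxiliary transversal in both applications of \eqref{eq:GPA} (with $i$ and $j$ swapped for $A_2$), so that the messy sign prefactors involving $\smaller{B\cap[n]}{\cdot}$ automatically cancel against each other. The well-definedness of \eqref{eq:GPA} with respect to the choice of auxiliary transversal was already established just after its definition, so no additional verification is needed there.
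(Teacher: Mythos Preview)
Your proof is correct and follows essentially the same approach as the paper: apply \eqref{eq:GPA} to both $B\setminus\skewpair{i}\cup\skewpair{j}$ and $B\cup\skewpair{i}\setminus\skewpair{j}$ using the same auxiliary transversal $B$, and note that the only difference in the resulting sign exponents is $\indicator{i\in B}$ versus $\indicator{j\in B}$. Your write-up is in fact more explicit than the paper's about checking that $B$ is admissible for $A_2$ (with the dummy indices swapped) and that $\skewpairs{i}{j}=\skewpairs{j}{i}$, which makes the cancellation of the $\smaller{B\cap[n]}{\cdot}$ terms transparent.
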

\begin{proof}
For any $B\in \cT_n^\sigma$ and distinct $i,j\in [n]$, by \eqref{eq:GPA}, we have
    \begin{align*}
        (-1)^{m + \indicator{i\in B}} \varphi(B \setminus \skewpair{i} \cup \skewpair{j})
        &=
        \psi(B) \psi(B\symdiff\skewpairs{i}{j}) \\
        &=
        (-1)^{m + \indicator{j\in B}} \varphi(B \setminus \skewpair{i} \cup \skewpair{j}),
    \end{align*}
    where $m = \smaller{B\cap[n]}{i} + \smaller{B\cap[n]}{j} + 1$.
\end{proof}

To simplify the calculation in the proofs of Lemma~\ref{lem:rGP2} and Lemma~\ref{lem:composition1}, we introduce the following formula for $\varphi$, which unifies the two cases $B\in\cT_n^\sigma$ and $A\in\cA_n$.
\begin{lem}
Suppose $T \in \cT_n^{1-\sigma}$ and $i,j\in T$. Then  
\begin{equation}\label{eq:WtoGP}
    \varphi(T \setminus \{i\} \cup \{j^*\})
    =
        (-1)^{m} \psi(T\symdiff\skewpair{i}) \psi(T\symdiff\skewpair{j}),
\end{equation}
where $m = \smaller{T\cap[n]}{\ol{i}} + \smaller{T\cap[n]}{\ol{j}} + \indicator{i<j^*} + \indicator{i\in [n]}$. 
\end{lem}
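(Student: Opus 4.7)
The plan is to case-split on $i=j$ versus $i\neq j$. When $i=j$, the set $T\setminus\{i\}\cup\{i^*\}$ equals the transversal $T\symdiff\skewpair{i}\in\cT_n^\sigma$, and by the construction of $\varphi$ on $\cT_n$ the left side equals $\psi(T\symdiff\skewpair{i})^2$. The ordering $1<1^*<2<2^*<\cdots$ gives $\indicator{i<i^*}=\indicator{i\in[n]}$, hence
\[
m \;=\; 2\smaller{T\cap[n]}{\ol{i}}+2\indicator{i\in[n]} \;\equiv\; 0 \pmod 2,
\]
so the formula is immediate.

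For $i\neq j$, the set $A:=T\setminus\{i\}\cup\{j^*\}$ lies in $\cA_n$, with $\skewpair{\ol{j}}\subseteq A$ and $\skewpair{\ol{i}}\cap A=\emptyset$, so I would compute $\varphi(A)$ using \eqref{eq:GPA} with $\ol{i},\ol{j}\in[n]$ playing the roles of ``$i$'' and ``$j$'' there. For the required transversal I would take $B:=T\symdiff\skewpair{i}$; since swapping $i\leftrightarrow i^*$ flips the parity of $|{\cdot}\cap[n]^*|$ and $T\in\cT_n^{1-\sigma}$, we have $B\in\cT_n^\sigma$. The identities $B\setminus\skewpair{\ol{i}}\cup\skewpair{\ol{j}}=A$ and $B\symdiff\skewpairs{\ol{i}}{\ol{j}}=T\symdiff\skewpair{j}$ follow directly from the definitions, so \eqref{eq:GPA} delivers the product $\psi(T\symdiff\skewpair{i})\psi(T\symdiff\skewpair{j})$ on the right-hand side of \eqref{eq:WtoGP}.

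It remains to reconcile the sign exponent. The key observations are: $B\cap[n]=(T\cap[n])\symdiff\{\ol{i}\}$, which gives $\smaller{B\cap[n]}{\ol{i}}=\smaller{T\cap[n]}{\ol{i}}$ and $\smaller{B\cap[n]}{\ol{j}}\equiv\smaller{T\cap[n]}{\ol{j}}+\indicator{\ol{i}<\ol{j}}\pmod 2$; also $\indicator{\ol{i}\in B}=\indicator{i\in[n]^*}$; and a short positional check using $k\mapsto 2k-1$, $k^*\mapsto 2k$ shows that $\indicator{i<j^*}=\indicator{\ol{i}<\ol{j}}$ whenever $i,j\in T$ are distinct (the apparent coincidence $i=j^*$ is excluded because $T$ is a transversal). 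Substituting into the exponent of \eqref{eq:GPA} and using $\indicator{i\in[n]^*}+1\equiv\indicator{i\in[n]}\pmod 2$ produces exactly $m$. The expected main obstacle is the bookkeeping of parity corrections as $i,j$ range over $[n]\cup[n]^*$, but the four observations above reduce the entire sign match to a single mod-$2$ computation without explicit case analysis.
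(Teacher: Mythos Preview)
Your proof is correct and follows essentially the same strategy as the paper: split into $i=j$ and $i\neq j$, and in the latter case apply \eqref{eq:GPA} and verify the sign. The only difference is that you take $B=T\symdiff\skewpair{i}$ whereas the paper takes $B=T\symdiff\skewpair{j}$; since the paper already notes that \eqref{eq:GPA} is independent of this choice, the resulting sign calculations are equivalent (your identity $\indicator{\ol{i}<\ol{j}}=\indicator{i<j^*}$ is the complement of the paper's $\indicator{\ol{j}<\ol{i}}=\indicator{i<j^*}+1$).
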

\begin{proof}
When $i=j$, this is $\varphi(B) = \psi(B)^2$, where $B=T \setminus \{i\} \cup \{j^*\}\in\cT_n^\sigma$. 

When $i\neq j$, let $A=T \setminus \{i\} \cup \{j^*\}$ and $B=T\symdiff\skewpair{j}$. Note that $i,j^*\in B$. Then by \eqref{eq:GPA},
\[\varphi(T \setminus \{i\} \cup \{j^*\})=\varphi(A) = (-1)^{\smaller{B\cap[n]}{\ol{i}} + \smaller{B\cap[n]}{\ol{j}}+\indicator{i\in [n]}+1 }\psi(T\symdiff\skewpair{i}) \psi(T\symdiff\skewpair{j}).
\] 
We also have 
\begin{align*}
 \smaller{B\cap[n]}{\ol{i}} & \equiv  \smaller{T\cap[n]}{\ol{i}} + \smaller{(B\symdiff T)\cap[n]}{\ol{i}}\\ 
 & \equiv \smaller{T\cap[n]}{\ol{i}} + \indicator{\ol{j}<\ol{i}} \pmod{2}
\end{align*}
and 
\begin{align*}
 \smaller{B\cap[n]}{\ol{j}} & \equiv  \smaller{T\cap[n]}{\ol{j}} + \smaller{(B\symdiff T)\cap[n]}{\ol{j}}\\ 
 & \equiv \smaller{T\cap[n]}{\ol{j}} \pmod{2}
\end{align*}
Since $i\neq j$, we have $\indicator{\ol{j}<\ol{i}}=\indicator{i<j^*}+1$ and  hence
\[ \smaller{B\cap[n]}{\ol{i}} + \smaller{B\cap[n]}{\ol{j}}+\indicator{i\in [n]}+1
\equiv m \pmod{2}.
\qedhere
\]
\end{proof}

\begin{lem}\label{lem:rGP2}
    The function $\varphi$ satisfies \ref{item:rGP2}.
\end{lem}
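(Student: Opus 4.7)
The plan is to reduce the restricted Grassmann--Pl\"ucker relation for $\varphi$ to a Wick relation for $\psi$, using the unified formula~\eqref{eq:WtoGP} so as to treat uniformly the cases where each argument of $\varphi$ is a transversal or an almost-transversal. Fix a hyper-transversal $\hyper$ and hypo-transversal $\hypo$, and write $\hyper=\skewpair{a}\cup R$, where $\skewpair{a}$ is the unique skew pair contained in $\hyper$ and $R$ is a partial transversal of size $n-1$ disjoint from $\skewpair{a}$; let $\skewpair{b}$ be the unique skew pair disjoint from $\hypo$. Choose two auxiliary transversals $T,T'\in\cT_n^{1-\sigma}$ defined by $T:=R\cup\{\alpha\}$ and $T':=\hypo\cup\{\beta\}$, where $\alpha\in\skewpair{a}$ and $\beta\in\skewpair{b}$ are the unique elements forcing $T$ and $T'$ to lie in $\cT_n^{1-\sigma}$.

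For each $i\in\hyper\setminus\hypo$, the formula~\eqref{eq:WtoGP}, applied to each of $\varphi(\hyper\setminus\{i\})$ and $\varphi(\hypo\cup\{i\})$ using the auxiliary transversals $T$ and $T'$ respectively, rewrites the product as a signed product of four $\psi$-values
\[(-1)^{m_i}\,\psi(T\symdiff\skewpair{a})\,\psi(T'\symdiff\skewpair{b})\,\psi(T\symdiff\skewpair{\ol{i}})\,\psi(T'\symdiff\skewpair{\ol{i}}).\]
(When one of the factors is itself a transversal rather than an almost-transversal, two of the four $\psi$-values coincide; when a transversal in question has parity $1-\sigma$, the term vanishes by~\ref{item:rGP3}.) Factoring out the common quantity $\psi(T\symdiff\skewpair{a})\,\psi(T'\symdiff\skewpair{b})$ and invoking~\ref{item:T4}, the relation~\ref{item:rGP2} reduces to showing
\[\sum_{i}(-1)^{|\hyper\symdiff\hypo<i|+m_i}\,\psi(T\symdiff\skewpair{\ol{i}})\,\psi(T'\symdiff\skewpair{\ol{i}})\in N_F.\]
The set of values $\ol{i}$ appearing in the nonvanishing terms coincides with $(T\symdiff T')\cap[n]$, so up to one overall sign this is exactly the Wick relation~\ref{item:W2} for the pair $(T,T')$, which lies in $N_F$ by hypothesis on $\psi$.

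The main obstacle is the sign bookkeeping: one must verify that the combined sign $(-1)^{|\hyper\symdiff\hypo<i|+m_i}$ equals the Wick sign $(-1)^{|(T\symdiff T')\cap[n]<\ol{i}|}$ up to a single global sign independent of $i$. This calculation relies on the parity identities for $\smaller{X}{\cdot}$ already exploited in the derivation of~\eqref{eq:WtoGP} and in the proof of Lemma~\ref{lem:rGP4}, together with the observation that $\hyper\symdiff\hypo$ and $T\symdiff T'$ agree outside $\skewpair{a}\cup\skewpair{b}$. The case analysis splits according to whether $\skewpair{a}=\skewpair{b}$ and according to the parities of $|R\cap[n]^*|$ and $|\hypo\cap[n]^*|$, which determine whether $\ol{a}$ and $\ol{b}$ belong to $(T\symdiff T')\cap[n]$. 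In each subcase the terms of the restricted GP sum indexed by $i\in\skewpair{a}\cup\skewpair{b}$ account exactly for the Wick terms indexed by $k\in\{\ol{a},\ol{b}\}$ (when present). This bookkeeping is elementary but delicate, and constitutes the bulk of the proof.
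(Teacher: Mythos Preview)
Your plan is correct and matches the paper's approach: choose $T,T'\in\cT_n^{1-\sigma}$, use~\eqref{eq:WtoGP} to rewrite each nonvanishing term as a signed product of four $\psi$-values sharing the common factor $\psi(T\symdiff\skewpair{a})\,\psi(T'\symdiff\skewpair{b})$, and reduce to the Wick relation for $(T,T')$.

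One point worth knowing: the sign verification that you propose to do by case analysis (on whether $\skewpair{a}=\skewpair{b}$ and on the parities of $|R\cap[n]^*|$ and $|\hypo\cap[n]^*|$) is carried out \emph{uniformly} in the paper with no case split. After expanding the combined exponent modulo~$2$ and cancelling, the only $k$-dependent residues are $\indicator{\alpha^*<k}+\indicator{k<\alpha^*}$, which equals~$1$ since $k\ne\alpha^*$, and $|T\symdiff T'<k|+\indicator{k\in[n]}$, which is $\equiv 1\pmod 2$ because $T\symdiff T'$ is a union of skew pairs (adjacent under the chosen order $1<1^*<2<\cdots$) and $k$ lies in one of them. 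No splitting on $\skewpair{a}$ versus $\skewpair{b}$ is needed, and the bookkeeping you anticipate as ``the bulk of the proof'' collapses to a few lines. A minor additional remark: since you bypass Lemma~\ref{lem:hypo/hyper-transversals} by choosing $\alpha,\beta$ for parity alone, you should say explicitly that when the common factor vanishes every term in the sum is already zero, so that invoking~\ref{item:T4} is only needed in the nondegenerate case.
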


\begin{proof}
    Let $\hyper$ be a hyper-transversal and $\hypo$ be a hypo-transversal of $\ground$. We need to show
    \begin{equation}\label{eq:WtoGP-pf}
        \sum_{k\in \hyper \setminus \hypo}
        (-1)^{\smaller{\hyper \symdiff \hypo}{k}}
        \varphi(\hyper \setminus \{k\})
        \varphi(\hypo \cup \{k\})
        \in N_F.
    \end{equation}
    Let $i,j\in\ground$ be the elements such that $\skewpair{i} \subseteq \hyper$ and $\skewpair{j} \cap \hypo = \emptyset$.
    By Lemma~\ref{lem:hypo/hyper-transversals}, we may assume that $\hyper\setminus\{i\}$ or $\hyper \setminus\{i^*\}$ is a basis because otherwise the left-hand side of \eqref{eq:WtoGP-pf} is zero. Furthermore, by interchanging $i$ and $i^*$ if necessary, we may assume that $\hyper \setminus\{i^*\}$ is a basis.
    Similarly, we may assume that $\hypo \cup\{j\}$ is a basis.
    Let \[T := \hyper \setminus \{i\} \text{ and } T' := \hypo \cup\{j^*\} .\]
    Then $T$ and $T'$ are transversals in $\cT_n^{1-\sigma}$ where $\sigma \in\{0,1\}$ in \ref{item:rGP3}.
    Since $\varphi(T)=\varphi(T') = 0$, \eqref{eq:WtoGP-pf} is equivalent to
    \begin{equation}\label{eq:WtoGP-pf1}
        \sum_{k\in T\setminus T'}
        (-1)^{\smaller{\hyper\symdiff \hypo}{k}}
        \varphi(\hyper \setminus \{k\})
        \varphi(\hypo \cup \{k\})
        \in N_F.
    \end{equation}

    By \eqref{eq:WtoGP}, for each $k \in T$,
    \[
        \varphi(\hyper \setminus \{k\})=\varphi(T \setminus\{k\}\cup \{i\})
        =
        (-1)^{m_1(k)}
        \psi(T \symdiff \skewpair{i}) \psi(T \symdiff \skewpair{k}),
    \]
    where $m_1(k) := \smaller{T \cap[n]}{\ol{k}} + \smaller{T \cap[n]}{\ol{i}} + \indicator{k<i} + \indicator{k\in [n]}$.
    Similarly, for each $k \notin T'$,
    \[
        \varphi(\hypo \cup \{k\})
        =\varphi(T'\setminus\{j^*\}\cup \{k\})=
        (-1)^{m_2(k)}
        \psi(T'\symdiff \skewpair{j}) \psi(T'\symdiff \skewpair{k}),
    \]
    where $m_2(k) := \smaller{T'\cap[n]}{\ol{j}} + \smaller{T'\cap[n]}{\ol{k}} +  \indicator{j^*<k}+ \indicator{j^*\in [n]}$.
    
    By the Wick relations~\ref{item:W2}, we have 
    \begin{equation}\label{eq:WtoGP-pf2}
        \sum_{k\in T \setminus T'} (-1)^{|(T \symdiff T')\cap[n] < \ol{k}|} \psi(T \symdiff \skewpair{k}) \psi(T'\symdiff \skewpair{k}) \in N_F.
    \end{equation}
    To derive \eqref{eq:WtoGP-pf1} from \eqref{eq:WtoGP-pf2}, it remains to show that the sign
    \[(-1)^{\smaller{\hyper \symdiff \hypo}{k}+m_1(k)+m_2(k)-\smaller{(T \symdiff T')\cap [n]}{\ol{k}}}\]
    does not depend on $k$. We simplify the exponent:
    \begin{align*}
        &\smaller{\hyper \symdiff \hypo}{k}+m_1(k)+m_2(k)-\smaller{(T \symdiff T')\cap [n]}{\ol{k}}\\
        \equiv& \smaller{\hyper}{k}+\smaller{\hypo}{k}+m_1(k)+m_2(k)-\smaller{T \cap [n]}{\ol{k}} - \smaller{T'\cap [n]}{\ol{k}}\\
        \equiv&\smaller{T}{k}+\indicator{i<k}+\smaller{T'}{k}+\indicator{j^*<k}\\
        &+\smaller{T \cap [n]}{\ol{i}}+\indicator{k<i} + \indicator{k\in [n]}+\smaller{T'\cap [n]}{\ol{j}} +  \indicator{j^*<k}+ \indicator{j^*\in [n]}\\
        \equiv & \smaller{T \symdiff T'}{k}+ \indicator{i<k}+\indicator{k<i} + \indicator{k\in [n]}+c \pmod 2,
    \end{align*}
where $c=\smaller{T \cap [n]}{\ol{i}}+\smaller{T'\cap [n]}{\ol{j}}+ \indicator{j^*\in [n]}$ does not depend on $k$. Because $T$ and $T'$ are both transversals, $T \symdiff T'$ consists of skew pairs. Then because $k\in T \setminus T'$, we have \[\smaller{T \symdiff T'}{k}\equiv \smaller{\skewpair{k}}{k}\equiv \indicator{k^*<k}\equiv 1-\indicator{k\in [n]}\pmod 2.\]
Lastly, because $k\in T=S\setminus\{i\}$, we have $k\neq i$ and hence
\[\indicator{i<k}+\indicator{k<i}=1. \qedhere \]

\end{proof}

\subsubsection{Restricted GP functions to orthogonal signatures}\label{sec:GPtoO}
Let $\varphi : \cT_n \cup \cA_n \to F$ be a restricted Grassmann--Pl\"{u}cker function. We define an $F$-signature $\cC \subseteq F^{\ground}$ as follows. For each hyper-transversal $S$, we define a vector $X_S \in F^{\ground}$ supported by $S$ as
\[
    X_S(i) := (-1)^{|S < i|} \varphi(S \setminus \{i\}) \text{ for } i\in S.
\]
Let $\cC$ be the set of all nonzero scalar multiples of such vectors $X_S$ with $X_S \ne 0$, i.e., \[ \cC = \{c X_S \mid c\in F^\times, \ S \text{ is a hyper-transversal}, \ X_S \ne 0\},\] and we define \[ \GPtoO(\eqcls{\varphi}) := \cC.\]

Let $M$ be the underlying even antisymmetric matroid of $\varphi$. By Lemma~\ref{lem:hypo/hyper-transversals}, we have the following alternative formula for $\cC$:
\[ \cC = \{c X_S \mid c\in F^\times, \ S \text{ is a hyper-transversal containing a transversal basis of }M\}.\]

\begin{lem}\label{lem:GPtoOsupport}
The supports of vectors in $\cC$ form the circuit set of the orthogonal matroid that corresponds to $M$ via the bijection in Theorem~\ref{thm:support}.
\end{lem}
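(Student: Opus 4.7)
The plan is to prove that $\supp(\cC)$ coincides with the circuit set of the underlying even antisymmetric matroid $M$; Theorem~\ref{thm:support} then finishes, since $M$ and its corresponding orthogonal matroid share a circuit set. Because passing to nonzero scalar multiples does not affect supports, it suffices to analyze
\[
\supp(X_S) = \{i \in S : S\setminus\{i\}\text{ is a basis of }M\}
\]
for each hyper-transversal $S$ with $X_S \ne 0$.

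The key input is Lemma~\ref{lem:two circuits}(2): whenever $T\in\cT_n$ and $i\in T$ have the property that $T\symdiff\skewpair{i}$ is a basis of $M$, the fundamental circuit of $M$ with respect to this basis and $i$ equals $\{j\in T\cup\{i^*\} : (T\cup\{i^*\})\setminus\{j\}\text{ is a basis of }M\}$, which is precisely $\supp(X_S)$ for $S = T\cup\{i^*\}$. The strategy is therefore to match the hyper-transversals $S$ with $X_S\ne 0$ against pairs $(T,i)$ satisfying the hypotheses of Lemma~\ref{lem:two circuits}, and simultaneously to match fundamental circuits of $M$ against such supports.

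For the forward direction, fix $S$ with $X_S\ne 0$ and let $\skewpair{e}$ be its unique skew pair. Some $S\setminus\{k\}$ is a basis of $M$, so Lemma~\ref{lem:hypo/hyper-transversals}(1) forces $S\setminus\{e\}$ or $S\setminus\{e^*\}$ to be a (necessarily transversal) basis; relabeling $e\leftrightarrow e^*$ if needed, we may take $T = S\setminus\{e^*\}$, so that $T\in\cT_n$, $e\in T$, and $T\symdiff\skewpair{e} = S\setminus\{e\}$ is a basis. Lemma~\ref{lem:two circuits}(2) then identifies $\supp(X_S)$ with the fundamental circuit of $M$ determined by $(T,e)$. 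Conversely, every fundamental circuit of $M$ is by construction of the form $\supp(X_{T\cup\{i^*\}})$ for such a pair $(T,i)$, and Lemma~\ref{lem: all fundamental} ensures that every circuit of $M$ is fundamental. The only subtlety is verifying that the relabeling $e\leftrightarrow e^*$ is harmless, which it is since skew pairs are unordered and the hypotheses of Lemma~\ref{lem:two circuits} are symmetric in the exchange; beyond this, the argument is a direct translation of previously established facts about fundamental circuits of even antisymmetric matroids.
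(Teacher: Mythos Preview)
Your proof is correct and follows essentially the same approach as the paper's: both arguments identify $\supp(X_S)$ with a fundamental circuit of $M$ via Lemma~\ref{lem:two circuits}, invoke Lemma~\ref{lem: all fundamental} to cover all circuits, and conclude via Theorem~\ref{thm:support}. The only difference is that you spell out the use of Lemma~\ref{lem:hypo/hyper-transversals} explicitly in the forward direction, whereas the paper has already absorbed it into the alternative description of $\cC$ stated just before the lemma.
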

\begin{proof}
By Lemma~\ref{lem:two circuits}, the support of every $X_S\in\cC$ is a fundamental circuit of $M$. Since every circuit is a fundamental circuit, the supports of vectors in $\cC$ form the circuit set of $M$. By Theorem~\ref{thm:support}, it is also the circuit set of the corresponding orthogonal matroid.    
\end{proof}

For a hypo-transversal $S'$, we define a vector $Y_{S'} \in F^{\ground}$ supported by the complement of $(S')^*$ as
\[
    Y_{S'}(i) := (-1)^{\smaller{S'}{i^*}} \varphi(S' \cup \{i^*\}) 
    \text{ for } 
    i \notin (S')^*.
\]

\begin{lem}\label{lem: vector from hypo-transversal}
    Let $S$ be a hyper-transversal and $S'$ be a hypo-transversal such that $S' = S \setminus \skewpair{i}$ for some $i$.
    Then $Y_{S'}$ equals $X_S$ or $-X_S$.
\end{lem}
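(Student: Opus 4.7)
The plan is to compute the ratio $Y_{S'}(j)/X_S(j)$ for each $j \in S$ where the values are nonzero and verify it equals a fixed sign $\epsilon \in \{\pm 1\}$ independent of $j$. First observe that $\ground \setminus (S')^* = S$, so both $X_S$ and $Y_{S'}$ are supported inside $S$, and can be compared componentwise. Split the analysis according to whether $j$ lies in the distinguished skew pair $\skewpair{i}$ or in $S \setminus \skewpair{i} = S'$.

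For $j \in \skewpair{i}$, the sets $S \setminus \{j\}$ and $S' \cup \{j^*\}$ literally coincide (both equal $S' \cup \{i^*\}$ when $j=i$, both equal $S' \cup \{i\}$ when $j=i^*$), and both lie in $\cT_n$. These two transversals sit in opposite parity classes $\cT_n^0, \cT_n^1$, so by \ref{item:rGP3} exactly one of $j = i$ or $j = i^*$ contributes a potentially nonzero value; for this value the ratio reduces to the pure sign $(-1)^{|S' < j^*| - |S < j|}$. For $j \in S'$, both $S \setminus \{j\}$ and $S' \cup \{j^*\}$ are almost-transversals, the former containing $\skewpair{i}$ and the latter $\skewpair{j}$, and $(S \setminus \{j\}) \symdiff \skewpairs{i}{j} = S' \cup \{j^*\}$. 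Lemma~\ref{lem: rgp4} therefore yields
\[\varphi(S' \cup \{j^*\}) = (-1)^{|(S' \cup \{j^*\}) \cap [n]^*| + 1 - \sigma} \varphi(S \setminus \{j\}).\]
In particular the supports of $X_S$ and $Y_{S'}$ coincide, so it suffices to match signs at the nonzero components.

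The remaining step is sign bookkeeping: expand $|S' < j^*|$ and $|S < j|$ using $S = S' \cup \skewpair{i}$ and the convention $x < x^*$ for $x \in [n]$. For $j \in S'$ (where $j \neq i, i^*$) the sum $\indicator{i < j} + \indicator{i^* < j}$ is always even, so the $j$-dependent contributions cancel modulo~$2$ and the exponent from Lemma~\ref{lem: rgp4} collapses to $|S' \cap [n]^*| + \sigma + 1$. For the unique nonzero $j \in \skewpair{i}$, the choice between $j=i$ and $j=i^*$ is dictated by the parity of $|S' \cap [n]^*|$ relative to $\sigma$ via \ref{item:rGP3}, and a direct check shows that it produces the same sign $\epsilon = (-1)^{|S' \cap [n]^*| + \sigma + 1}$. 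The main obstacle is this sign reconciliation across the two cases; the role of \ref{item:rGP3} in selecting which element of $\skewpair{i}$ is nontrivial is what ensures the two cases agree on a single $\epsilon$.
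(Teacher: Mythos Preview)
Your proof is correct and follows essentially the same approach as the paper: both arguments split into the cases $j\in\skewpair{i}$ (where $S\setminus\{j\}=S'\cup\{j^*\}$ literally and \ref{item:rGP3} kills one of the two values) and $j\in S'$ (where \ref{item:rGP4}, in your case via Lemma~\ref{lem: rgp4}, relates the two almost-transversal values), followed by a parity check that the resulting sign is independent of $j$. The only presentational difference is that the paper first normalizes by swapping $i$ and $i^*$ so that $S\setminus\{i^*\}\in\cT_n^\sigma$, which makes the $j=i$ case trivially zero and yields the global sign in the form $(-1)^{\indicator{i\in[n]}}$; your expression $(-1)^{|S'\cap[n]^*|+\sigma+1}$ is the same sign written without that normalization.
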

\begin{proof}
    Let $\sigma\in\{0,1\}$ be as in \ref{item:rGP3}.
    By interchanging $i$ and $i^*$ if necessary, we may assume that $S\setminus\{i^*\} \in \cT_n^\sigma$.
    Note that $S$ and $(S')^*$ partition $\ground$.
    
    By definition, it suffices to show that 
    \[
        (-1)^{\smaller{S}{j}} \varphi(S \setminus \{j\})
        =
        (-1)^{\indicator{i\in[n]} + \smaller{S'}{j^*}} \varphi(S' \cup \{j^*\})
    \]
    for any $j\in S$.
    It can be shown by case analysis.
    \begin{enumerate}[label=\rm Case~\arabic*.]
        \item $j \notin \skewpair{i}$. We have 
        \[
            \varphi(\hyper \setminus \{j\}) = (-1)^{\indicator{i\in[n]} + \indicator{j\in[n]}} \varphi(\hypo \cup \{j^*\})
        \]
        by~\ref{item:rGP4} applied to $B=S \setminus \{i^*\}$, and $\smaller{\hyper}{j} + \smaller{\hypo}{j^*} \equiv \indicator{j \in[n]} \pmod{2}$.
        \item $j=i$. We have $\varphi(\hyper \setminus \{i\})=\varphi(\hypo \cup\{i^*\})=0$ by~\ref{item:rGP3}. 
        \item $j=i^*$. We have $\hyper \setminus\{i^*\} = \hypo \cup\{i\}$ and $\smaller{\hyper}{i^*} + \smaller{\hypo}{i} \equiv \indicator{i\in[n]} \pmod{2}$. \qedhere
    \end{enumerate}
\end{proof}

\begin{prop}\label{prop:GPtoO}
    $\GPtoO(\eqcls{\varphi})$ is an orthogonal $F$-signature of an orthogonal matroid.
\end{prop}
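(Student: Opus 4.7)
The plan is to verify the three properties required of an orthogonal $F$-signature: (i) $\cC$ is closed under scalar multiplication by $F^\times$, which is immediate from the construction; (ii) $\supp(\cC)$ is the circuit set of an orthogonal matroid, which is exactly Lemma~\ref{lem:GPtoOsupport} combined with Theorem~\ref{thm:support}; and (iii) the orthogonality relation~\ref{item:O}. Thus the only real content is (iii), namely showing $\bilin{X}{Y} \in N_F$ for all $X, Y \in \cC$.

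By closure of $N_F$ under $F^\times$-scaling (axiom~\ref{item:T4}), it suffices to verify~(iii) for $X = X_{S_1}$ and $Y = X_{S_2}$ where $S_1, S_2$ are hyper-transversals. The strategy is to replace one of these hyper-transversal vectors by its hypo-transversal counterpart, so that the resulting sum matches the left-hand side of~\ref{item:rGP2} on the nose. Concretely, choose any skew pair $\skewpair{k} \subseteq S_2$ and set $\hypo_2 := S_2 \setminus \skewpair{k}$, a hypo-transversal. By Lemma~\ref{lem: vector from hypo-transversal} we have $Y_{\hypo_2} = \pm X_{S_2}$, so it suffices to prove $\bilin{X_{S_1}}{Y_{\hypo_2}} \in N_F$.

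Unfolding the definitions, the summand $X_{S_1}(i)\, Y_{\hypo_2}(i^*)$ vanishes unless $i \in S_1 \setminus \hypo_2$, and for such $i$ it equals
\[
(-1)^{\smaller{S_1}{i} + \smaller{\hypo_2}{i}}\, \varphi(S_1 \setminus \{i\})\, \varphi(\hypo_2 \cup \{i\}).
\]
Since elements of $S_1 \cap \hypo_2$ that are less than $i$ are counted twice, a short parity check gives $\smaller{S_1}{i} + \smaller{\hypo_2}{i} \equiv \smaller{S_1 \symdiff \hypo_2}{i} \pmod{2}$. Hence
\[
\bilin{X_{S_1}}{Y_{\hypo_2}} = \sum_{i \in S_1 \setminus \hypo_2} (-1)^{\smaller{S_1 \symdiff \hypo_2}{i}}\, \varphi(S_1 \setminus \{i\})\, \varphi(\hypo_2 \cup \{i\}),
\]
which is precisely the left-hand side of the restricted Grassmann--Pl\"ucker relation~\ref{item:rGP2} applied to the pair $(S_1, \hypo_2)$, and therefore lies in $N_F$.

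The main obstacle is not combinatorial depth but the sign bookkeeping, and the design of the vectors $Y_{S'}$ is exactly what makes this bookkeeping trivial: their signs are tuned so that~\ref{item:rGP2} falls out directly, while Lemma~\ref{lem: vector from hypo-transversal} guarantees that exchanging $X_{S_2}$ for $Y_{\hypo_2}$ costs only a global sign that is harmless in $N_F$.
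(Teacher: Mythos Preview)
Your proof is correct and follows essentially the same route as the paper's: reduce to $X_{S_1}$ and $X_{S_2}$, swap $X_{S_2}$ for $Y_{S'_2}$ via Lemma~\ref{lem: vector from hypo-transversal}, and then observe that the resulting inner product is literally the left-hand side of \ref{item:rGP2}. Your sign bookkeeping is slightly more explicit than the paper's, but the argument is the same.
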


\begin{proof}
    By Lemma~\ref{lem:GPtoOsupport}, it suffices to show that $\cC$ is orthogonal, i.e.,
    \[\langle X, Y \rangle = \sum_{z\in\ground} X(z)Y(z^*) \in N_F\] for all $X,Y\in \cC$.
    By definition and Lemma~\ref{lem: vector from hypo-transversal}, we may assume that $X:=X_{\hyper}$ and $Y:=Y_{\hypo}\in \cC$ for some hyper-transversal $\hyper$ and hypo-transversal $\hypo$.

    Note that $X(z)\neq 0$ implies $z\in \hyper$, and $Y(z^*)\neq 0$ implies $z \notin \hypo$. Thus,
    \begin{equation*}
    \begin{split}
    \sum_{z\in\ground} X(z)Y(z^*) & = \sum_{z\in \hyper \setminus \hypo} X(z)Y(z^*) \\
    & = \sum_{z\in \hyper \setminus \hypo} (-1)^{\smaller{\hyper}{z}}\varphi(\hyper \setminus \{z\})\cdot (-1)^{\smaller{\hypo}{z}}\varphi(\hypo \setminus \{z\})
    \in N_F,
    \end{split}
    \end{equation*}
    where the last containment follows from~\ref{item:rGP2}.
\end{proof}

\subsubsection{Composing the maps}\label{sec:2identitymaps}
We have defined the maps $\WtoGP$, $\GPtoO$, and $\WtoO$. We denote by $\OtoW$ the inverse map of $\WtoO$ (cf. Theorem~\ref{thm:JK}). 

\begin{lem}\label{lem:composition1}
    $\GPtoO \circ \WtoGP \circ \OtoW$ is the identity map.
\end{lem}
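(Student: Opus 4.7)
The plan is to unwind the composition starting from an orthogonal $F$-signature $\cC_0$. Setting $\eqcls{\psi} := \OtoW(\cC_0)$, $\eqcls{\varphi} := \WtoGP(\eqcls{\psi})$, and $\cC := \GPtoO(\eqcls{\varphi})$, the goal is $\cC = \cC_0$. First I would verify that both signatures have support sets equal to the circuit set of the same orthogonal matroid: $\cC_0$ by the construction of $\WtoO$ in~\cite{JK}, and $\cC$ by Lemma~\ref{lem:GPtoOsupport} combined with Theorem~\ref{thm:support}, since the underlying even antisymmetric matroid of $\varphi$ corresponds to the underlying orthogonal matroid of $\psi$. By Lemma~\ref{lem:scalar}, it then suffices to exhibit, for each circuit, a representative vector in $\cC$ that is a nonzero $F^\times$-scalar multiple of the corresponding representative in $\cC_0$.

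Fix a transversal $T \in \cT_n^{1-\sigma}$ and $x \in T$ with $T \symdiff \skewpair{x}$ a basis; the vector $X_T \in \cC_0$ is supported on the fundamental circuit $C \subseteq T$. By interchanging $x$ and $x^*$ if necessary, I may assume $x \in [n]$, and set $S := T \cup \{x^*\}$, a hyper-transversal containing the skew pair $\skewpair{x}$. Since $S \setminus \{x\} = T \symdiff \skewpair{x}$ is a basis, $\varphi(S \setminus \{x\}) \neq 0$ and hence $X_S$ is nonzero. Moreover, $S \setminus \{x^*\} = T \in \cT_n^{1-\sigma}$ forces $\varphi(T) = 0$ by \ref{item:rGP3}, so $X_S(x^*) = 0$; by Lemma~\ref{lem:two circuits} the support of $X_S$ equals $C$, matching that of $X_T$.

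For each $e \in T$, applying the unified formula~\eqref{eq:WtoGP} with $i = e$ and $j = x$ (valid for every $e \in T$, including $e = x$) yields
\[
    \varphi(S \setminus \{e\}) = (-1)^{m(e)}\, \psi(T \symdiff \skewpair{e})\, \psi(T \symdiff \skewpair{x}),
\]
where $m(e) := \smaller{T \cap [n]}{\bar e} + \smaller{T \cap [n]}{x} + \indicator{e < x^*} + \indicator{e \in [n]}$. Combining with $X_S(e) = (-1)^{|S < e|}\varphi(S \setminus \{e\})$ and $X_T(e) = (-1)^{|T^* \cap [n] \le \bar e|} \psi(T \symdiff \skewpair{e})$ produces
\[
    X_S(e) = (-1)^{\gamma(e)}\, \psi(T \symdiff \skewpair{x})\, X_T(e), \qquad \gamma(e) := |S < e| + m(e) - |T^* \cap [n] \le \bar e|.
\]
The key technical step is to show $\gamma(e) \equiv \smaller{T\cap[n]}{x} \pmod 2$ for every $e \in T$, so the exponent is $e$-independent. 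I would do this by writing $|S < e| = |T < e| + \indicator{x^* < e}$, then using the identity $|T < e| \equiv \smaller{T \cap [n]}{\bar e} + |T^* \cap [n] \le \bar e| + \indicator{e \in [n]} - 1 \pmod 2$ (proved by separating contributions from $T \cap [n]$ and $T \cap [n]^*$ in the chosen order), and finally invoking $\indicator{x^* < e} + \indicator{e < x^*} = 1$ since $e \neq x^*$ (because $e \in T$ and $x^* \notin T$). The $e$-dependent terms then pair up and cancel modulo $2$.

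Once $\gamma(e)$ is shown constant in $e$, the vector $X_S$ is a nonzero $F^\times$-multiple of $X_T$, so the $F^\times$-orbit of $X_S$ in $\cC$ coincides with that of $X_T$ in $\cC_0$. Letting $(T,x)$ range over all admissible pairs exhausts the generators of both signatures (via Lemma~\ref{lem: all fundamental} applied to the shared orthogonal matroid), so $\cC = \cC_0$ and the composition is the identity. The main obstacle is the parity bookkeeping for $\gamma(e)$, structurally similar to the sign analysis in the proof of Lemma~\ref{lem:rGP2}.
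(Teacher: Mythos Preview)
Your proof is correct and follows essentially the same strategy as the paper's: both arguments reduce to showing that for a transversal $T$ carrying a circuit, the vector $X_S$ produced by $\GPtoO$ (with $S=T\cup\{x^*\}$) is a nonzero scalar multiple of the vector $X_T$ produced by $\WtoO$, and both verify this via a parity computation based on the unified formula~\eqref{eq:WtoGP}. The paper organizes the computation as a comparison of ratios $X_S(i)/X_S(j^*)$ and $X_T(i)/X_T(j^*)$ leading to the sign identity~\eqref{eq: sign}, whereas you compute the exponent $\gamma(e)$ directly and show it is independent of $e$; these are equivalent bookkeeping choices.

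One small correction: the reduction ``by interchanging $x$ and $x^*$ if necessary, I may assume $x\in[n]$'' is not valid as stated, since $x$ is required to lie in the transversal $T$ and hence cannot be swapped for $x^*$; in particular, if the fundamental circuit lies entirely in $[n]^*$ no such choice exists. This does not affect the argument, however: simply drop that assumption and write $\bar x$ in place of $x$ in the definition of $m(e)$ and in the final constant $|T\cap[n]<\bar x|$. Your parity identity for $|T<e|$ and the cancellation via $\indicator{x^*<e}+\indicator{e<x^*}=1$ go through verbatim.
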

\begin{proof}
    We will show that $\GPtoO \circ \WtoGP = \WtoO$.
    Denote $\psi$ be a Wick $F$-function. Let 
    \begin{align*}
        \eqcls{\varphi} := \WtoGP(\eqcls{\psi}), \quad
        \cC := \GPtoO(\eqcls{\varphi}), \quad
        \text{and} \quad
        \cC' := \WtoO(\eqcls{\psi}).
    \end{align*}
    By Theorem~\ref{thm:support}, the underlying orthogonal matroids of $\cC$ and $\cC'$ are the same. It suffices to check that if vectors $X\in \cC$ and $X'\in \cC'$ have the same support, then $X=cX'$ for some $c\in F^\times$.

    Take $B \in \supp(\psi)$ and $j\in B$.
    Let $T := B\symdiff\skewpair{j}$ and $S := B \cup \{j^*\}$.
    Let $X_T \in \cC'$ and $X_S \in \cC$ be the vectors defined by $\WtoO$ and $\GPtoO$, respectively. By Lemma~\ref{lem:two circuits}, 
    $X_T$ and $X_S$ have the same support.
    We claim that $X_T =c X_S$ for some $c\in F^\times$.
    
    For each $i\in T \setminus \{j^*\}$, by definition and \eqref{eq:WtoGP}, we have 
    \[
        \frac{X_T(i)}{X_T(j^*)} = (-1)^{\smallereq{T^*\cap[n]}{\ol{i}} + \smallereq{T^*\cap[n]}{\ol{j}}} \frac{\psi(T\symdiff\skewpair{i})}{\psi(T\symdiff\skewpair{j})}
    \]
    and
    \begin{align*}
        (-1)^{\smaller{S}{i} + \smaller{S}{j^*}} \frac{X_S(i)}{X_S(j^*)} 
        &= 
        \frac{\varphi(S\setminus \{i\})}{\varphi(S\setminus \{j^*\})}=\frac{\varphi(T\setminus \{i\}\cup\{j\})}{\varphi(B)} \\
        &=
        (-1)^{\smaller{T\cap[n]}{\ol{i}} + \smaller{T\cap[n]}{\ol{j}} + \indicator{i<j} +  \indicator{i\in [n]}} \frac{\psi(T\symdiff\skewpair{i}) \psi(T\symdiff\skewpair{j})}{\psi(B)^2}.
    \end{align*}
    Therefore, it suffices to check that
    \begin{multline}\label{eq: sign}
        \smaller{S}{i} + \smaller{S}{j^*} \equiv  \smallereq{T^*\cap[n]}{\ol{i}} + \smallereq{T^*\cap[n]}{\ol{j}} \\
       +\smaller{T\cap[n]}{\ol{i}} + \smaller{T\cap[n]}{\ol{j}} + \indicator{i<j} +  \indicator{i\in [n]}
        \pmod{2}.
    \end{multline}
    This follows from the following observations:
    \begin{itemize}
        \item $\smaller{S}{i} = i-1 + \indicator{\ol{j} < \ol{i}}=i-1 + \indicator{j < i}$.
        \item $\smaller{S}{j^*} = j-1 + \indicator{j\in[n]}$.
        \item $\smallereq{T^*\cap[n]}{\ol{i}} +
        \smaller{T\cap[n]}{\ol{i}}=\indicator{i\in [n]^*}+\smaller{T^*\cap[n]}{\ol{i}} +
        \smaller{T\cap[n]}{\ol{i}}=\indicator{i\in [n]^*}+i-1$.
        \item $\smallereq{T^*\cap[n]}{\ol{j}} +
        \smaller{T\cap[n]}{\ol{j}}=\indicator{j\in [n]}+\smaller{T^*\cap[n]}{\ol{j}} +
        \smaller{T\cap[n]}{\ol{j}}=\indicator{j\in [n]}+j-1$.
        
    \end{itemize}

\end{proof}

To prove Lemma~\ref{lem:composition2}, we need Corollary~\ref{cor: equalratio} which follows from the connectivity of the basis graph of an antisymmetric matroid. 
\begin{defn}[\cite{Kim}]
    Let $M$ be an antisymmetric matroid with the basis set $\cB$. The \emph{basis graph} of $M$ is a graph $G_M$ where the vertices are elements in $\cB$ and two vertices $B$ and $B'$ are adjacent if and only if $|B\setminus B'|=1$ and at least one of $B$ and $B'$ is a transversal.
\end{defn}

\begin{lem}[\cite{Kim}]
    The basis graph $G_M$ is connected.
\end{lem}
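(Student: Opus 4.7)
The plan is to prove connectivity by induction on $|B_1 \symdiff B_2|$ for two bases $B_1, B_2 \in \cB$. The base case is trivial, and in the inductive step I aim to produce a basis $B$ adjacent to $B_1$ in $G_M$ with $|B \symdiff B_2|$ strictly smaller; the edge $B_1 B$ together with the inductive hypothesis then completes the path. I first note that the edges of $G_M$ are either between two transversal bases differing by a single skew pair, or between a transversal basis and an almost-transversal basis, since two almost-transversals always satisfy $|A \setminus A'| \ne 1$ under the adjacency condition. Hence every connecting path alternates through transversal bases.

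The first step is to reduce to the case of two transversal bases. If $B_1 \in \cB \cap \cA_n$, write $\skewpair{i} \subseteq B_1$ and $\skewpair{j} \cap B_1 = \emptyset$. Applying Lemma~\ref{lem:hypo/hyper-transversals}(1) to the hyper-transversal $B_1 \cup \{j^{\epsilon}\}$ yields an adjacent transversal basis $T$ of the form $B_1 \cup \{j^{\epsilon}\} \setminus \{i^{\delta}\}$ in $G_M$. Choosing $\epsilon$ so that $j^{\epsilon} \in B_2$ whenever possible, and using Definition~\ref{def: AM}(2) to flip signs when needed, ensures $|T \symdiff B_2| < |B_1 \symdiff B_2|$. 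Thus I may assume $B_1 = T_1$ and $B_2 = T_2$ are transversal bases; then $T_1 \symdiff T_2$ decomposes into skew pairs $\skewpair{i_1}, \ldots, \skewpair{i_k}$ with $k \ge 1$, and WLOG $i_1 \in T_1$ and $i_1^* \in T_2$.

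Next I apply the exchange axiom of Definition~\ref{def: AM}(1) to the hyper-transversal $\hyper := T_1 \cup \{i_1^*\}$ and the hypo-transversal $\hypo := T_2 \setminus \{i_1^*\}$. The element $i_1^* \in \hyper \setminus \hypo$ witnesses $\hyper \setminus \{i_1^*\} = T_1 \in \cB$ and $\hypo \cup \{i_1^*\} = T_2 \in \cB$, so the axiom yields a second element $k \in \hyper \setminus \hypo = (T_1 \setminus T_2) \cup \{i_1^*\}$, $k \neq i_1^*$, with $\hyper \setminus \{k\}, \hypo \cup \{k\} \in \cB$. In the favorable subcase $k = i_1$, both $T_1' := T_1 \symdiff \skewpair{i_1}$ and $T_2 \symdiff \skewpair{i_1}$ lie in $\cB$; since $T_1$ and $T_1'$ are transversal bases with $|T_1 \setminus T_1'| = 1$, they are adjacent in $G_M$, and $|T_1' \symdiff T_2| = |T_1 \symdiff T_2| - 2$, so the induction closes.

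The main obstacle is the residual subcase $k = j \in (T_1 \setminus T_2) \setminus \{i_1\}$, in which $A := \hyper \setminus \{j\}$ is an almost-transversal basis adjacent to $T_1$, but $|A \symdiff T_2|$ need not be smaller than $|T_1 \symdiff T_2|$. To handle this I would apply axiom~(1) a second time to a hyper/hypo-transversal pair built from $A$ and $T_2$, then invoke Definition~\ref{def: AM}(2) to reorient the resulting skew pair so that the guaranteed exchange produces a transversal basis $T_1'$ adjacent to $A$ with $|T_1' \symdiff T_2|$ strictly smaller. The delicate point is forcing a strict decrease at each step, which will likely require an auxiliary minimality choice (for instance, choosing $i_1$ and the ordering of the skew pairs in $T_1 \symdiff T_2$ to minimize a lexicographic or cardinality invariant), possibly combined with iterating through several almost-transversal intermediaries before returning to a transversal. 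This bookkeeping is the technical heart of the argument.
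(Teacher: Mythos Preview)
The paper does not give its own proof of this lemma; it is simply cited from~\cite{Kim}. So there is no paper proof to compare against, only your argument to assess.

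Your overall plan (induction on $|B_1 \symdiff B_2|$, reduction to transversal bases, then the exchange axiom) is the natural one, but you have misdiagnosed the ``main obstacle.'' In the residual subcase $k = j \in (T_1 \setminus T_2)\setminus\{i_1\}$, compute $A \symdiff T_2$ directly: since $A = T_1 \cup \{i_1^*\}\setminus\{j\}$, we have $A \symdiff T_2 = (T_1 \symdiff T_2)\symdiff\{i_1^*,j\}$. But $i_1^* \in T_2 \setminus T_1$ and $j \in T_1 \setminus T_2$ both lie in $T_1 \symdiff T_2$, so $|A \symdiff T_2| = |T_1 \symdiff T_2| - 2$. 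The induction therefore closes immediately on the pair $(A,T_2)$; no auxiliary minimality choice or iterated bookkeeping is needed. The paragraph you flag as ``the technical heart'' dissolves once this computation is done.

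The genuine soft spot is instead your reduction step. When $B_1 \in \cA_n$ you assert that one can always find an adjacent transversal basis $T$ with $|T \symdiff B_2| < |B_1 \symdiff B_2|$, but the choice of $\delta$ in $T = B_1 \cup \{j^{\epsilon}\}\setminus\{i^{\delta}\}$ is dictated by Lemma~\ref{lem:hypo/hyper-transversals}, not by you, and Definition~\ref{def: AM}(2) relates two almost-transversals, not an almost-transversal to a transversal, so ``flipping signs'' does not obviously fix this. (Concretely, if $B_2$ is also an almost-transversal with the same skew pair $\skewpair{i}$ and the same missing pair $\skewpair{j}$ as $B_1$, every adjacent transversal to $B_1$ has strictly larger symmetric difference with $B_2$.) A cleaner route: first observe that every almost-transversal basis is adjacent to some transversal basis (Lemma~\ref{lem:hypo/hyper-transversals}), so it suffices to connect any two transversal bases. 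Then run your main-case argument; in the residual subcase you obtain an almost-transversal $A$ adjacent to $T_1$ with $|A \symdiff T_2| = |T_1 \symdiff T_2| - 2$, and one further application of Lemma~\ref{lem:hypo/hyper-transversals} to $A \cup \{j^*\}$ (with $j^* \in T_2$) produces a transversal basis $T_1'$ adjacent to $A$ with $|T_1' \symdiff T_2| \le |T_1 \symdiff T_2| - 2$. Induct on transversal pairs only.
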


\begin{cor}\label{cor: equalratio}
Let $\varphi$ and $\varphi'$ be two restricted Grassmann--Pl\"{u}cker $F$-functions supported by the same even antisymmetric matroid $M=(\ground,\cB)$. Then $[\varphi]=[\varphi']$ if 
    \begin{equation}\label{eq: equalfraction}
        \frac{\varphi(B\setminus\skewpair{i} \cup\skewpair{j})}{\varphi(B)}
        =
        \frac{\varphi'(B\setminus\skewpair{i} \cup\skewpair{j})}{\varphi'(B)}
    \end{equation}
holds for any $B\in \cB\cap\cT_n$ and any $i,j\in B$ such that $B\setminus\skewpair{i} \cup\skewpair{j}\in\cB\cap\cA_n$.
\end{cor}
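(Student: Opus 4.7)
The plan is to exploit the connectivity of the basis graph $G_M$ of the even antisymmetric matroid $M$. Every basis of $M$ lies in $\cT_n \sqcup \cA_n$; moreover, two transversals in $\cT_n^\sigma$ cannot differ by a single element (if they did, the difference would be a skew pair, flipping the parity), and two almost-transversals cannot differ by a single element either (since together they would then contain two skew pairs). Consequently, every edge of $G_M$ connects a transversal basis to an almost-transversal basis. The strategy is to first verify that the hypothesis \eqref{eq: equalfraction} supplies the ratio along every such edge, and then to telescope along a path in $G_M$ starting from a fixed transversal basis.

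For the first step, fix an edge of $G_M$ between $B \in \cB \cap \cT_n$ and $A \in \cB \cap \cA_n$, and write $A = B \setminus \{y\} \cup \{x\}$ with $y \in B$ and $x \notin B$. Because $A$ contains a unique skew pair whereas $B$ contains none, we must have $x^* \in B$ and $y \ne x^*$. Setting $i := y$ and $j := x^*$, we have $i, j \in B$ with $\skewpair{i} \cap B = \{y\}$ and $\skewpair{j} \cap B = \{x^*\}$, so
\[
B \setminus \skewpair{i} \cup \skewpair{j} = B \setminus \{y\} \cup \{x, x^*\} = B \setminus \{y\} \cup \{x\} = A,
\]
and the hypothesis \eqref{eq: equalfraction} applies.

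Next, choose any $B_0 \in \cB \cap \cT_n$, which exists because $\cB \cap \cT_n$ is the basis set of a nonempty orthogonal matroid by Theorem~\ref{thm:support}. Rescale $\varphi'$ by the unit $\varphi(B_0)/\varphi'(B_0) \in F^\times$ so that $\varphi'(B_0) = \varphi(B_0)$; this only replaces $\varphi'$ by another representative of $[\varphi']$, so it suffices to show $\varphi = \varphi'$ afterwards. For any basis $B \in \cB$, pick a path $B_0 = C_0, C_1, \ldots, C_m = B$ in $G_M$ (which exists by connectivity), and telescope:
\[
\varphi(B) = \varphi(B_0) \prod_{k=0}^{m-1} \frac{\varphi(C_{k+1})}{\varphi(C_k)}.
\]
By the first step, each factor coincides with the corresponding factor for $\varphi'$ (reading \eqref{eq: equalfraction} directly when the $k$-th edge is traversed from a transversal to an almost-transversal, and inverting the given identity otherwise). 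Hence $\varphi(B) = \varphi'(B)$ for every basis, and both functions vanish on non-bases, so $\varphi = \varphi'$ and $[\varphi] = [\varphi']$.

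The argument is essentially bookkeeping once the connectivity of $G_M$ is in hand; the only step that requires genuine care is verifying that every edge of $G_M$ admits the $(i,j)$-parameterization demanded by the hypothesis, which is handled above.
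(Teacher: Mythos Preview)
Your approach is exactly the paper's: use the connectivity of $G_M$, observe that every edge joins a transversal basis to an almost-transversal basis expressible as $B\setminus\skewpair{i}\cup\skewpair{j}$, and telescope. One step in your reasoning is incorrect, though: two almost-transversals \emph{can} differ by a single element (for $n=3$, take $\{1,1^*,2\}$ and $\{1,1^*,2^*\}$; in the uniform orthogonal matroid with $\cB=\cT_3^0$ both are in fact almost-transversal bases). Your conclusion that every edge of $G_M$ joins a transversal to an almost-transversal is nonetheless true, because the \emph{definition} of $G_M$ already requires at least one endpoint of each edge to be a transversal; combined with your (correct) parity argument ruling out transversal--transversal edges in $\cT_n^\sigma$, this gives the desired edge structure. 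With that correction, the rest of your argument---the $(i,j)$-parameterization and the telescoping along a path from a fixed $B_0$---is fine.
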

\begin{proof}
Notice that when $M$ is even, the two endpoints of any edge in the basis graph $G_M$ must be $B\in \cB\cap\cT_n$ and $B\setminus\skewpair{i} \cup\skewpair{j}\in\cB\cap\cA_n$ for some $B,i,j$. Hence we have the desired result due to the above lemma.
\end{proof}

\begin{lem}\label{lem:composition2}
  $\WtoGP \circ \OtoW \circ \GPtoO$ is the identity map.
\end{lem}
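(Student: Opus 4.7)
The plan is to invoke Corollary~\ref{cor: equalratio}. Given a restricted Grassmann--Pl\"ucker $F$-function $\varphi$, set $\cC := \GPtoO(\eqcls{\varphi})$, let $\psi$ be a Wick function with $\OtoW(\cC) = \eqcls{\psi}$, and let $\varphi'$ be a restricted GP function with $\WtoGP(\eqcls{\psi}) = \eqcls{\varphi'}$. By Lemma~\ref{lem:GPtoOsupport} and Theorem~\ref{thm:support}, $\varphi$ and $\varphi'$ have the same underlying even antisymmetric matroid. Hence it suffices to verify
\[
    \frac{\varphi(A)}{\varphi(B)} = \frac{\varphi'(A)}{\varphi'(B)}
\]
for every transversal basis $B \in \cB \cap \cT_n^\sigma$ and every adjacent almost-transversal basis $A = B \setminus \skewpair{i} \cup \skewpair{j}$; WLOG $i, j \in [n]$ with $i, j \in B$, the remaining cases being analogous by symmetry.

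From the definition of $\WtoGP$, $\varphi'(B) = \psi(B)^2$, and by \eqref{eq:GPA},
\[
    \varphi'(A) = (-1)^{\smaller{B \cap [n]}{i} + \smaller{B \cap [n]}{j} + \indicator{i \in B} + 1}\, \psi(B)\, \psi(B_2),
\]
where $B_2 := B \symdiff \skewpairs{i}{j}$ is, by Theorem~\ref{thm:support}, the other transversal basis of the orthogonal matroid associated with $A$. It remains to express $\psi(B)$ and $\psi(B_2)$ in terms of $\varphi$.

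To do this, I will exploit the matching of fundamental circuits from Lemma~\ref{lem:two circuits}. Set $T := B \symdiff \skewpair{j} \in \cT_n^{1-\sigma}$ and the hyper-transversal $S := B \cup \{j^*\}$, so that $T \symdiff \skewpair{j} = B$ and $S = T \cup \{j\}$. Under $\GPtoO$ applied to $\varphi$, the vector $X_S$ with $X_S(e) = (-1)^{|S < e|} \varphi(S \setminus \{e\})$ lies in $\cC$; under $\WtoO$ applied to $\psi$ (which by Lemma~\ref{lem:composition1} also produces $\cC$), the vector $X_T$ with $X_T(e) = (-1)^{\smallereq{T^* \cap [n]}{\ol{e}}} \psi(T \symdiff \skewpair{e})$ lies in $\cC$. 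By Lemma~\ref{lem:two circuits}, $X_S$ and $X_T$ share the same support (the common fundamental circuit through $B$ and $j$), so by Lemma~\ref{lem:scalar} they are proportional: $X_T = c\, X_S$ for some $c \in F^\times$ reflecting the free normalization of $\psi$. Comparing the $j^*$-entries, using $S \setminus \{j^*\} = B$, yields $\psi(B)$ as an explicit signed multiple of $c\,\varphi(B)$; comparing the $i$-entries, using $S \setminus \{i\} = A$ and $T \symdiff \skewpair{i} = B_2$, yields $\psi(B_2)$ as an explicit signed multiple of $c\,\varphi(A)$. Taking the ratio eliminates $c$ and produces
\[
    \frac{\psi(B_2)}{\psi(B)} = \pm \frac{\varphi(A)}{\varphi(B)}
\]
with a sign determined purely by the exponents $|S < j^*|, |S < i|, \smallereq{T^* \cap [n]}{j}$, and $\smallereq{T^* \cap [n]}{i}$.

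The main obstacle is the sign bookkeeping. Substituting the above into $\varphi'(A)/\varphi'(B)$ combines the sign contributions from \eqref{eq:GPA}, from $\GPtoO$, and from $\WtoO$, and I will need to verify that the total exponent is even modulo $2$. This can be done using standard identities such as $\smallereq{B^* \cap [n]}{k} = k - \smallereq{B \cap [n]}{k}$ for transversals $B$ and $k \in [n]$, together with $|S < k| = (k-1) + \indicator{j^* < k}$ and $|S < j^*| = j$; the calculation reduces to a parity check of the same flavor as equation~\eqref{eq: sign} in the proof of Lemma~\ref{lem:composition1}, and collapses after cancellations such as $\indicator{j < i} - \indicator{j \le i} = 0$ for distinct $i, j$.
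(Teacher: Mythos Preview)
Your proposal is correct and follows essentially the same route as the paper: both arguments reduce to Corollary~\ref{cor: equalratio}, introduce the same $T = B\symdiff\skewpair{j}$ and $S = B\cup\{j^*\}$, use Lemma~\ref{lem:two circuits} and Lemma~\ref{lem:scalar} to identify $X_S$ and $X_T$ up to scalar, and then verify the sign identity via~\eqref{eq: sign}. One minor remark: the fact that $\WtoO([\psi]) = \cC$ follows immediately from the definition $\OtoW = (\WtoO)^{-1}$, so you need not invoke Lemma~\ref{lem:composition1} for that step.
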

\begin{proof}
    Let $\varphi$ be a restricted Grassmann--Pl\"{u}cker $F$-function. Let 
    \begin{align*}
        \cC := \GPtoO(\eqcls{\varphi}), \quad
        \eqcls{\psi} := \OtoW(\cC), \quad
        \text{and} \quad
        \eqcls{\varphi'} := \WtoGP(\eqcls{\psi}).
    \end{align*}
    The target is to show $[\varphi]=[\varphi']$. By Theorem~\ref{thm:support}, the two functions have the same support $\cB$. By Corollary~\ref{cor: equalratio}, it suffices to show that \eqref{eq: equalfraction} holds for any $B\in \cB\cap\cT_n$ and any $i,j\in B$ such that $B\setminus\skewpair{i} \cup\skewpair{j}\in\cB\cap\cA_n$. 
    Denote $S := B \cup \{j^*\}$ and $T := B\symdiff\skewpair{j}$.
    Then we have 
    \begin{align*}
        \frac{\varphi(B\setminus\skewpair{i} \cup\skewpair{j})}{\varphi(B)}
        &=
        (-1)^{\smaller{S}{i} + \smaller{S}{j^*}}
        \frac{X_S(i)}{X_S(j^*)}, 
        \\
        \frac{\varphi'(B\setminus\skewpair{i} \cup\skewpair{j})}{\varphi'(B)}& =(-1)^{\smaller{T\cap[n]}{\ol{i}} + \smaller{T\cap[n]}{\ol{j}} +\indicator{i<j}+ \indicator{i\in [n]}}\frac{\psi(T)\symdiff\skewpair{i}\psi(T\symdiff\skewpair{j})}{\psi(T\symdiff\skewpair{j})^2},
        \\
        \frac{X_T(i)}{X_T(j^*)}
        &=
        (-1)^{\smallereq{T^*\cap[n]}{\ol{i}} + \smallereq{T^*\cap[n]}{\ol{j}}}
        \frac{\psi(T\symdiff\skewpair{i})}{\psi(T\symdiff\skewpair{j})}.       
    \end{align*}
    By Lemma~\ref{lem:two circuits}, the supports of $X_S$ and $X_T$ are the same, and hence by Lemma~\ref{lem:scalar}, $X_S$ and $X_T$ only differ by a scalar. By \eqref{eq: sign}, one can deduce \eqref{eq: equalfraction}.
\end{proof}

\begin{proof}[\bf Proof of Theorem~\ref{thm:main}]
By Lemma~\ref{lem:composition1} and Lemma~\ref{lem:composition2},
$\GPtoO \circ (\WtoGP \circ \OtoW$) and $(\WtoGP \circ \OtoW) \circ \GPtoO$ are identities maps. Hence $\GPtoO$ and $\WtoGP \circ \OtoW$ are inverse to each other. Since $\OtoW$ is bijective, $\GPtoO$ and $\WtoGP$ are also bijective. 
\end{proof}

\begin{rmk}
For a matroid-like object over tracts, we usually need a homotopy theorem on the basis graph to prove the cryptomorphisms. For matroids, Baker and Bowler \cite{BB} used Maurer's homotopy theorem \cite{Maurer1973}; for orthogonal matroids, Jin and the second author \cite{JK} used Wenzel's homotopy theorem \cite{Wenzel1995}; for antisymmetric matroids, the second author built a homotopy theorem \cite{Kim} to prove the desired cryptomorphism. Seemingly, our proof does not use a homotopy theorem, but we use the cryptomorphism in \cite{JK}.

\end{rmk}

\section{Weak restricted Grassmann--Pl\"ucker functions}\label{sec: weak}

The target of this section is to establish the ``weak'' counterpart of Theorem~\ref{thm:main}, which is Theorem~\ref{thm:main-weak}. As noted in \cite{BB}, there are (at least) two natural notions of matroids over tracts, which they call weak and strong $F$-matroids. Each of them admits a number of cryptomorphisms. Although strong matroids are all weak, the proofs for the cryptomorphisms of weak matroids turn out to be more involved. Similar situations occur in the study of orthogonal matroids \cite{JK} and our study. 

In \S\ref{sec: weak rGP}, we first define weak restricted Grassmann--Pl\"ucker functions, which are the new object in Theorem~\ref{thm:main-weak}. For the other two objects in the theorem, weak orthogonal $F$-matroids and weak orthogonal $F$-circuit sets, we recall them in \S\ref{sec: weak OM}. Then we prove that all three objects are equivalent in \S\ref{sec:weak main proof}. 

\subsection{Weak restricted Grassmann--Pl\"ucker functions}\label{sec: weak rGP}

\begin{defn}
    A \emph{weak restricted Grassmann--Pl\"ucker function} on $\ground$ over a tract $F$ is a function $\varphi : \cT_n \cup \cA_n \to F$ such that the support of $\varphi$ is the set of bases of an even antisymmetric matroid on $\ground$ and it satisfies \ref{item:rGP3}, \ref{item:rGP4}, and the following weaker replacement of \ref{item:rGP2}:
    \begin{enumerate}[label=\rm(rGP\arabic*$'$)]
        \setcounter{enumi}{1}
        \item\label{item:rGP2'} 
        For any hyper-transversal $\hyper$ and hypo-transversal $\hypo$ 
        with $|\hyper\setminus \hypo|\leq 4$, 
        we have 
        \[
            \sum_{i\in \hyper\setminus \hypo} (-1)^{\smaller{\hyper\symdiff \hypo}{i}} \varphi(\hyper\setminus\{i\}) \varphi(\hypo\cup\{i\}) \in N_F.
        \]
    \end{enumerate}
    Two weak restricted Grassmann--Pl\"ucker functions $\varphi$ and $\varphi'$ are \emph{projectively equivalent} if there is $c\in F^\times$ such that $\varphi' = c \varphi$.
\end{defn}

\begin{rmk}
In \ref{item:rGP2'}, the assumption $|\hyper\setminus \hypo|\leq 4$ can be replaced by $|\hyper\setminus \hypo|=3$ or $4$, because $|\hyper\setminus \hypo|$ is at least $2$ and the case $|\hyper\setminus \hypo|=2$ holds trivially.
\end{rmk}

Since the support of a (strong) restricted Grassmann--Pl\"ucker function forms an even antisymmetric matroid, we deduce that:

\begin{prop}
    Every strong restricted Grassmann--Pl\"ucker function is weak.
\end{prop}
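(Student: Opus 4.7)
The proof amounts to verifying each of the three requirements in the definition of a weak restricted Grassmann--Pl\"ucker function for a given function $\varphi : \cT_n \cup \cA_n \to F$ satisfying \ref{item:rGP1}--\ref{item:rGP4}. Two of these requirements, namely \ref{item:rGP3} and \ref{item:rGP4}, appear verbatim in both definitions and thus hold automatically. The relation \ref{item:rGP2'} is exactly the restriction of \ref{item:rGP2} to pairs $(\hyper, \hypo)$ with $|\hyper \setminus \hypo| \le 4$, so it is an immediate special case of \ref{item:rGP2}.

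The remaining condition---namely, that the support of $\varphi$ forms the basis set of an even antisymmetric matroid on $\ground$---is precisely the content of the lemma in Section~\ref{sec:rGP-support} that was established for (strong) restricted Grassmann--Pl\"ucker functions. That lemma was proved using \ref{item:rGP1}, \ref{item:rGP2}, \ref{item:rGP3}, and \ref{item:rGP4} together, and yields exactly the structural assumption on the support required by the weak definition. In particular, the non-vanishing condition \ref{item:rGP1} from the strong definition is absorbed into the support condition in the weak definition, since by definition an even antisymmetric matroid has a non-empty basis set.

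Combining these observations, $\varphi$ satisfies all the axioms of a weak restricted Grassmann--Pl\"ucker function. There is no real obstacle in the argument: the only nontrivial ingredient is the previously established support lemma, and everything else is a direct comparison of the two sets of axioms.
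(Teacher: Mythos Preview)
Your proof is correct and follows essentially the same approach as the paper, which simply notes that the support of a strong restricted Grassmann--Pl\"ucker function forms an even antisymmetric matroid (by the lemma in \S\ref{sec:rGP-support}) and leaves the remaining verifications implicit. Your write-up spells out the easy axiom-by-axiom comparison that the paper omits.
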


\subsection{Weak orthogonal \texorpdfstring{$F$}{F}-matroids and weak orthogonal \texorpdfstring{$F$}{F}-circuit sets}\label{sec: weak OM}

We recall the cryptomorphism between weak orthogonal $F$-matroids and weak orthogonal $F$-circuit sets, which is established in \cite{JK}.  

\begin{defn}
    A \emph{weak Wick function} on $\ground$ over a tract $F$ is a function $\psi : \cT_n \to F$ such that the support of $\psi$ is the set of bases of an orthogonal matroid on $\ground$ and 
    \begin{enumerate}[label=\rm(W\arabic*$'$)]
        \setcounter{enumi}{1}
        \item\label{item:W2'} $\psi$ satisfies the \emph{$4$-term Wick relations}; that is, for any $T,T'\in \cT_n$ with $|(T\symdiff T') \cap [n]| = 4$, we have 
        \[
            \sum_{i\in(T\symdiff T')\cap [n]} (-1)^{|(T\symdiff T')\cap [n]<i|} \psi(T\symdiff\skewpair{i}) \psi(T'\symdiff\skewpair{i}).
        \]
    \end{enumerate}
\end{defn}

\begin{defn}
A \emph{weak orthogonal $F$-matroid} is an equivalence class of weak Wick $F$-functions, where two weak Wick functions are equivalent if one is a nonzero multiple of the other.    
\end{defn}

As the support of a (strong) orthogonal $F$-matroid is an orthogonal matroid, every orthogonal $F$-matroid is a weak orthogonal $F$-matroid.
Weak and strong orthogonal $F$-matroids coincide when $F$ is a partial field~\cite{BJ2023} or the tropical hyperfield $\bT$~\cite{Rincon2012}, but they differ in general~{\cite[Ex.~4.22]{JK}}. It remains open whether weak and strong orthogonal $\bS$-matroids are the same.

A weaker notion of orthogonal $F$-signatures of orthogonal matroids is defined nontrivially by making use of fundamental circuits with respect to a common basis.
Let $M = (\ground, \cB)$ be an orthogonal matroid.
We denote by $\FC(B,e)$ the fundamental basis with respect to a basis $B$ and an element $e\notin B$ (see Def.~\ref{def: fundamental circuit}).
Given circuits $C_1,\dots,C_k$ are \emph{modular} if they are distinct fundamental circuits with respect to the same basis, i.e., there are $B\in \cB$ and distinct $e_1,\dots,e_k\in B^*$ such that $C_i = \FC(B,e_i)$ for each $i$.
A tuple of modular circuits is of the \emph{first kind} if pairwise unions of the circuits are subtransversals. It is of the \emph{second kind} if none of pairwise unions of the circuits is a subtransversal. 
Remark that a modular tuple can be of neither the first kind nor the second kind, but a modular pair is always of the first or second kind by definition.

We write $\ul{X}$ for the support of a vector $X\in F^{\ground}$.
For aesthetic reason, we use $\ul{X}_i$ rather than~$\ul{X_i}$.

Later in the proofs, we will frequently use the following observation. 

\begin{lem}\label{lem: modular pair of the second kind}
For a modular pair of circuits $C_1=\FC(B,e_1)$ and $C_2=\FC(B,e_2)$, the following are equivalent:
\begin{enumerate}
    \item The modular pair is of the second kind.
    \item $|C_1\cap C_2^*|=2$.
    \item $B\symdiff\skewpairs{e_1}{e_2}$ is also a basis. 
\end{enumerate}
\end{lem}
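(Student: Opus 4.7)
The plan is to unpack the definition of fundamental circuits via Lemma~\ref{lem:two circuits} and to track the set theory carefully. Since $e_1, e_2 \in B^*$ with $e_1, e_2$ distinct, and since both $e_1, e_2 \notin B$ while $e_1^*, e_2^* \in B$, the four elements $e_1, e_2, e_1^*, e_2^*$ are pairwise distinct. Setting $T_i := B \symdiff \skewpair{e_i}$, so that $T_i = (B \setminus \{e_i^*\}) \cup \{e_i\}$ and $C_i \subseteq T_i$, a direct computation gives $T_1 \cap T_2 = B \setminus \{e_1^*, e_2^*\}$ and $T_1 \cup T_2 = B \cup \{e_1, e_2\}$. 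Therefore $C_1 \cup C_2 \subseteq B \cup \{e_1, e_2\}$, and the only skew pairs this union can contain are $\skewpair{e_1}$ and $\skewpair{e_2}$.

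Next I would relate membership of $e_1^*$ in $C_2$ to condition (3). By Lemma~\ref{lem:two circuits}, $e_1^* \in C_2 = \FC(B, e_2)$ is equivalent to $T_2 \symdiff \skewpair{e_1} \in \cB$. Since $e_1^* \in T_2$ and $e_1 \notin T_2$, a direct computation shows
\[
T_2 \symdiff \skewpair{e_1} = (B \setminus \{e_1^*, e_2^*\}) \cup \{e_1, e_2\} = B \symdiff \skewpairs{e_1}{e_2}.
\]
By the symmetric argument, $e_2^* \in C_1$ is equivalent to the same condition, so
\[
e_1^* \in C_2 \ \Longleftrightarrow \ B \symdiff \skewpairs{e_1}{e_2} \in \cB \ \Longleftrightarrow \ e_2^* \in C_1.
\]
This already establishes the equivalence of the ``three-way'' condition ``$e_1^* \in C_2$ or $e_2^* \in C_1$'' with (3).

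For the equivalence of this condition with (1), observe that $C_1 \cup C_2$ fails to be a subtransversal precisely when it contains one of the skew pairs $\skewpair{e_1}, \skewpair{e_2}$, i.e., when $e_1^* \in C_2$ or $e_2^* \in C_1$. Finally, for (2) I would analyze $C_1 \cap C_2^*$ by splitting $C_1$ into $C_1 \cap C_2$ and $C_1 \setminus C_2$: for $j \in C_1 \cap C_2 \subseteq B \setminus \{e_1^*, e_2^*\}$, the element $j^*$ lies outside $B \cup \{e_1, e_2\} \supseteq C_2$, so such $j$ contribute nothing; for $j \in C_1 \setminus C_2$, the only way $j^* \in C_2$ is $j = e_1$ (giving $j^* = e_1^* \in C_2$) or $j = e_2^*$ (giving $j^* = e_2 \in C_2$). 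Thus $|C_1 \cap C_2^*|$ is $0$ or $2$ according to whether (3) fails or holds, completing the triple equivalence.

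The routine part is the set-theoretic bookkeeping; the only subtle step is recognizing that $T_2 \symdiff \skewpair{e_1}$ coincides with $B \symdiff \skewpairs{e_1}{e_2}$, which is what lets one pivot cleanly from the characterization of fundamental circuits to the symmetric condition (3).
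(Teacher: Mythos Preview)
Your proof is correct. The paper states this lemma as an observation without proof, and your argument fills in exactly the details one would expect: using Lemma~\ref{lem:two circuits} to characterize membership in the fundamental circuits, computing that $T_2 \symdiff \skewpair{e_1} = B \symdiff \skewpairs{e_1}{e_2}$ to link $e_1^* \in C_2$ and $e_2^* \in C_1$ symmetrically to condition~(3), and then reading off conditions~(1) and~(2) from the set-theoretic structure of $C_1 \cup C_2 \subseteq B \cup \{e_1,e_2\}$.
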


\begin{defn}[{\cite[Def.~3.10 and Lem.~4.13]{JK}}]
    A \emph{weak $F$-circuit set} of an orthogonal matroid $M$ is an $F$-signature $\cC$ of $M$ satisfying the following conditions:
    \begin{enumerate}[label=\rm(O${}_2'$)]
       \item\label{item:O2'}
       $\langle X, Y \rangle \in N_F$ for all $X,Y\in \cC$ such that $\ul{X}$ and $\ul{Y}$ are a modular pair (of the second kind).
    \end{enumerate}
    \begin{enumerate}[label=\rm(L-\roman*$'$)]
        \item\label{item:L1}
        Let $X_1,X_2\in \cC$ be $F$-circuits such that $\underline{X}_1,\underline{X}_2$ form a modular pair of the first kind and let $f\in \underline{X}_1 \cap \underline{X}_2$.
        If $X_1(f) + X_2(f) \in N_F$, then there is $X_3 \in \cC$ such that $X_3(f)=0$ and $X_1+X_2+X_3 \in (N_F)^E$.
        \item\label{item:L2}
        Let $X_1,X_2,X_3\in \cC$ be $F$-circuits such that $\underline{X}_1,\underline{X}_2,\underline{X}_3$ form a modular triple of the second kind. Denote $\underline{X}_i = \FC(B,e_i)$ for a basis $B$ and elements $e_i\in B^*$.
        If $X_{i+1}(e_i^*) + X_{i+2}(e_i^*) \in N_F$ for each $i$, where the indices are taken modulo $3$, then there is $X_4 \in \cC$ such that $X_4(e_i^*)=0$ for each $i=1,2,3$ and $X_1+X_2+X_3+X_4 \in (N_F)^E$.
    \end{enumerate}
\end{defn}

\begin{rmk}
    Regarding \ref{item:O2'}, when $\ul{X}$ and $\ul{Y}$ are a modular pair of the first kind, we have  $\ul{X}\cap\ul{Y}^*=\emptyset$ and hence $\langle X, Y \rangle \in N_F$ holds trivially.  
\end{rmk}

\begin{rmk}
    In \ref{item:L2}, the sufficient condition can be weakened as follows: $X_{i+1}(e_i^*) + X_{i+2}(e_i^*) \in N_F$ for $i=1,2$.
    The case when $i=3$ follows from \ref{item:O2'}.
\end{rmk}

\begin{rmk}
    In~\cite{JK}, a weak $F$-circuit set of an orthogonal matroid is originally defined by replacing \ref{item:O2'} with the stronger condition:
    \begin{enumerate}[label=\rm(O${}_2$)]
        \item\label{item:O2} For all $X,Y\in \cC$, when $\langle X, Y \rangle$ has exactly two nonzero summands, $\langle X, Y \rangle\in N_F$.
    \end{enumerate}
    However, it was shown in~\cite[Lem.~4.13]{JK} that \ref{item:O2'} is equivalent to \ref{item:O2} under assuming \ref{item:L1}.
\end{rmk}

\begin{rmk}\label{rmk: weak ortho}
    In~\cite{JK}, a \emph{weak orthogonal $F$-signature} of an orthogonal matroid $M$ is defined as an $F$-signature of $M$ satisfying the orthogonality only involving at most four terms:
    \begin{enumerate}[label=\rm(O${}_4$)]
        \item For all $X,Y\in \cC$, when $\langle X, Y \rangle$ has at most four nonzero summands, $\langle X, Y \rangle\in N_F$.
    \end{enumerate}
    It was shown that weak orthogonal $F$-signatures are equivalent to \emph{moderately weak orthogonal $F$-matroids}. This notion strictly lies between strong and weak orthogonal $F$-matroids and is defined by Wick relations involving at most four nonvanishing terms determined by the underlying orthogonal matroid.
    In Definition~\ref{def: weak ortho}, we introduce a more elaborate weaker notion of orthogonal $F$-signature under the same name, which turns out to be equivalent to weak $F$-circuit sets of orthogonal matroids. We suggest calling the definition given in~\cite{JK} ``moderately weak orthogonal $F$-signatures''.

    In addition, the authors in~\cite{JK} defined a ``strong'' $F$-circuit set of an orthogonal matroid in terms of sums of vectors with modular supports, and showed that it is the same thing as an orthogonal $F$-signature.
\end{rmk}

In \S\ref{sec:review-orthogonal-matroids}, we reviewed the bijection $\WtoO$ between orthogonal $F$-matroids and orthogonal $F$-signatures (Theorem~\ref{thm:JK}).
The authors in~\cite{JK} also established a bijection between weak orthogonal $F$-matroids and weak $F$-circuit sets of orthogonal matroids, which is defined by the same formula as $\WtoO$ but with weak Wick functions instead of strong Wick functions.
We denote it by $\WtoO'$.

\begin{thm}[\cite{JK}]\label{thm:JK2}
    $\WtoO'$ is a bijection between weak orthogonal $F$-matroids and weak $F$-circuit sets of orthogonal matroids.
\end{thm}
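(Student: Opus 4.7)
}
The plan is to run the proof of the strong version (Theorem~\ref{thm:JK}) in parallel, carefully tracking which Wick relations are invoked at each step. The key observation is that the $4$-term Wick relation \ref{item:W2'} and each of the weak circuit axioms \ref{item:O2'}, \ref{item:L1}, \ref{item:L2} involve at most four nonzero summands, and these correspond to one another under the formula defining $\WtoO'$.

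First I would verify that $\WtoO'$ is well-defined. Given a weak Wick function $\psi$, set $\cC:=\WtoO'([\psi])$. That $\supp(\cC)$ is the set of circuits of the orthogonal matroid $M=(\ground,\supp(\psi))$ follows as in the strong case, since Lemma~\ref{lem:two circuits} uses only the exchange axiom for $\supp(\psi)$. For \ref{item:O2'}, suppose $X_{T_1},X_{T_2}\in\cC$ have supports forming a modular pair of the second kind. By Lemma~\ref{lem: modular pair of the second kind} we may choose $T_i=B\symdiff\skewpair{e_i}$ with $B\symdiff\skewpairs{e_1}{e_2}$ also a basis; then $|(T_1\symdiff T_2)\cap[n]|=2$ and $\langle X_{T_1},X_{T_2}\rangle$ reduces to a two-summand expression whose vanishing is a direct sign computation (the corresponding $2$-term Wick relation is trivially true and plays no role). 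For \ref{item:L1}, given a modular pair of the first kind $X_1,X_2$ and $f\in\ul{X_1}\cap\ul{X_2}$, the desired $X_3$ arises from $X_{T_3}$ for an auxiliary transversal $T_3$, and the identity $X_1+X_2+X_3\in(N_F)^E$ is, coordinate by coordinate, a $3$-term consequence of a $4$-term Wick relation among $(T_1,T_2,T_3)$. For \ref{item:L2}, $X_4$ with $X_4(e_i^*)=0$ arises analogously, and $X_1+X_2+X_3+X_4\in(N_F)^E$ is, coordinate by coordinate, a full $4$-term Wick relation \ref{item:W2'} among transversals differing in exactly four elements of $[n]$.

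Next I would construct the inverse. Given a weak $F$-circuit set $\cC$ of an orthogonal matroid $M$, fix a basis $B_0$, set $\psi(B_0):=1$, and extend $\psi$ to all bases along edges of the basis graph of $M$ by ratios of circuit-vector entries. Consistency around cycles in the basis graph follows from \ref{item:O2'}, \ref{item:L1}, and (the weak analog of) Lemma~\ref{lem:scalar}. That the resulting $\psi$ satisfies \ref{item:W2'} is verified by reversing the translations above: each $4$-term Wick relation among $T,T'$ with $|(T\symdiff T')\cap[n]|=4$ is identified, after fixing a common basis containing the relevant fundamental circuits, with a $4$-term sum of circuit-vector entries at a single coordinate, which lies in $N_F$ by \ref{item:L2} or a consequence of \ref{item:L1}. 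Injectivity is then immediate because $\psi$ is determined up to a global scalar by the ratios of its associated $X_T$.

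The hard part is the sign bookkeeping: signs enter through \ref{item:W2'} (via $\smaller{(T\symdiff T')\cap[n]}{i}$), through the definition of $X_T$ (via $\smallereq{T^*\cap[n]}{\ol{e}}$), and through the pairing $\langle\cdot,\cdot\rangle$. The genuinely new combinatorial content beyond the strong case in \cite{JK} is the verification that only $4$-term identities are needed throughout, so that \ref{item:W2'} on the Wick side and \ref{item:O2'}, \ref{item:L1}, \ref{item:L2} on the circuit side are truly sufficient; this reduces to the same case analysis as in the proof of Theorem~\ref{thm:JK}, now restricted to modular pairs and triples rather than arbitrary pairs of transversals.
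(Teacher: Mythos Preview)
The paper does not prove Theorem~\ref{thm:JK2}; it is quoted from~\cite{JK} and used as a black box. So there is no proof in the paper to compare against, and your proposal must be judged on its own.

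Your outline captures the correct architecture (show $\WtoO'$ lands in weak $F$-circuit sets; build the inverse by propagating $\psi$ along the basis graph from a fixed $B_0$; check compatibility). The translation of \ref{item:O2'} into a trivial $2$-term identity and of \ref{item:L2} into a genuine $4$-term Wick relation is also correct in spirit.

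However, there is a real gap in the inverse direction. You write that ``consistency around cycles in the basis graph follows from \ref{item:O2'}, \ref{item:L1}, and (the weak analog of) Lemma~\ref{lem:scalar}.'' This is where the actual work lies, and it does \emph{not} follow from those three ingredients alone. One must know that the fundamental group of the basis graph of an orthogonal matroid is generated by short cycles (squares and certain triangles/hexagons), so that checking consistency on those generators suffices. This is Wenzel's homotopy theorem~\cite{Wenzel1995}, and the present paper explicitly notes (in the remark closing \S\ref{sec:proof-main-theorem}) that the proof in~\cite{JK} relies on it. Without invoking such a homotopy result, your propagation of $\psi$ along paths could give path-dependent values, and nothing in \ref{item:O2'}, \ref{item:L1}, \ref{item:L2} by themselves controls long cycles. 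You should name the homotopy theorem and indicate which short cycles must be checked and which of the weak axioms handles each type.

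A second, smaller issue: your treatment of \ref{item:L1} as ``a $3$-term consequence of a $4$-term Wick relation among $(T_1,T_2,T_3)$'' is too vague. The relevant Wick relation is indexed by a \emph{pair} of transversals, not a triple, and for a modular pair of the first kind the obvious pair $(T_1,T_2)$ has $|(T_1\symdiff T_2)\cap[n]|=2$, giving a trivial relation. The nontrivial input comes from pairing $T_3$ (or its analogue) with an auxiliary transversal $T_4=B\symdiff\skewpair{g}$ for each coordinate $g$, exactly as in the proof of Proposition~\ref{prop: weak ortho to weak circuit}; you should make this explicit.
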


\subsection{Proof of Theorem~\ref{thm:main-weak}}\label{sec:weak main proof}

Let $\WtoGP'$ and $\GPtoO'$ be the maps defined by the same formulas as $\WtoGP$ and $\GPtoO$ but with weak Wick functions and weak restricted Grassmann--Pl\"{u}cker functions instead of the strong ones, respectively.
We prove results analogous to Propositions~\ref{prop:WtoGP} and~\ref{prop:GPtoO}.
The proofs are basically the same but involve more technical details.

Rather than proving directly that $\GPtoO'([\varphi])$ is a weak $F$-circuit set (Prop.~\ref{prop:GPtoO2}), we introduce an intermediate step, as shown in the diagram below. 
\[\begin{tikzcd}
	{\text{Weak orth. $F$-matroids }} && {} && {\text{Weak $F$-circuit set }} \\
	\\
	{\text{Weak rGP $F$-functions }} && {} && {\text{Weak orth. $F$-signatures}}
    \arrow["\WtoO'\text{ in Thm. }\ref{thm:JK2}"', from=1-1, to=1-5]
	\arrow["\WtoGP'\text{ in Prop. }\ref{prop:WtoGP2}"', from=1-1, to=3-1]
	\arrow["\OtoW':=(\WtoO')^{-1}"', curve={height=12pt}, from=1-5, to=1-1]
	\arrow["\GPtoO'\text{ in Prop. }\ref{prop: weak rGP to weak ortho}"', from=3-1, to=3-5]
    \arrow["\begin{array}{c} \subseteq\text{ by Prop. }\ref{prop: weak ortho to weak circuit} \\ = \text{ by Cor. }\ref{cor: weak ortho equals weak circuit} \end{array}"', from=3-5, to=1-5]
\end{tikzcd}\]

We define \emph{weak orthogonal $F$-signatures} of orthogonal matroids (Def.~\ref{def: weak ortho}), which are clearly a weakening of orthogonal $F$-signatures. We then show that every weak orthogonal $F$-signature is a weak $F$-circuit set; \textit{a posteriori}, these two notions coincide  (Cor.~\ref{cor: weak ortho equals weak circuit}).

\subsubsection{Weak Wick functions to weak restricted GP functions}

\begin{prop}\label{prop:WtoGP2}
    $\WtoGP'(\eqcls{\psi})$ is an equivalence class of weak restricted Grassmann--Pl\"{u}cker function for any weak Wick $F$-function $\psi$.
\end{prop}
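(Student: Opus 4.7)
The plan is to follow the strong case (Proposition~\ref{prop:WtoGP}) step by step, verifying each axiom of a weak restricted Grassmann--Pl\"ucker function for the function $\varphi$ associated to $\psi$ by the formulas defining $\WtoGP'$. The support condition, \ref{item:rGP3}, and \ref{item:rGP4} transfer without change: indeed, $\varphi(B)=\psi(B)^2$ and \eqref{eq:GPA} identify $\supp(\varphi)$ with the image of $\supp(\psi)$ under the bijection of Theorem~\ref{thm:support}, hence the basis set of an even antisymmetric matroid; \ref{item:rGP3} is immediate; and the proof of \ref{item:rGP4} in Lemma~\ref{lem:rGP4} invokes only the defining formulas and no Wick relation, so it applies verbatim.

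The substantive work is \ref{item:rGP2'}. I fix a hyper-transversal $\hyper$ and hypo-transversal $\hypo$ with $|\hyper\setminus\hypo|\le 4$. As in the proof of Lemma~\ref{lem:rGP2}, Lemma~\ref{lem:hypo/hyper-transversals} lets me assume, after swapping $i\leftrightarrow i^*$ and $j\leftrightarrow j^*$ as necessary, that both $T:=\hyper\setminus\{i\}$ and $T':=\hypo\cup\{j^*\}$ lie in $\cT_n^{1-\sigma}$; otherwise every summand of the rGP sum is zero. The same sign bookkeeping as in Lemma~\ref{lem:rGP2} rewrites the rGP sum, up to a global sign, as the Wick sum
\[
\sum_{k\in T\setminus T'}(-1)^{|(T\symdiff T')\cap[n]<\ol{k}|}\,\psi(T\symdiff\skewpair{k})\,\psi(T'\symdiff\skewpair{k}).
\]
Since $T\subseteq\hyper$ and $T'\supseteq\hypo$, I have $T\setminus T'\subseteq\hyper\setminus\hypo$, hence $|(T\symdiff T')\cap[n]|=|T\setminus T'|\le 4$.

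The main obstacle is that the weak axiom \ref{item:W2'} only applies when $|(T\symdiff T')\cap[n]|=4$, whereas a priori the reduction could produce a Wick sum with as few as $0$ terms or as many as $4$. The crucial observation that overcomes this is a parity constraint: since $T,T'\in\cT_n^{1-\sigma}$ have $|\cdot\cap[n]^*|$ of the same parity, and this parity difference has the same parity as $|(T\symdiff T')\cap[n]|$, the latter must be even. Combined with the bound $\le 4$, this forces $|(T\symdiff T')\cap[n]|\in\{0,2,4\}$, so the problematic $3$-term case never arises. The $0$-term case is an empty sum. For the $2$-term case, a direct inspection of the possible configurations of the two skew pairs in $T\symdiff T'$ shows that the two summands are equal products of $\psi$-values with opposite signs, and thus cancel. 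The $4$-term case is precisely \ref{item:W2'}. Therefore the Wick sum lies in $N_F$, completing the verification of \ref{item:rGP2'}.
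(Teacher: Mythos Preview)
Your proof is correct and takes essentially the same approach as the paper. The only cosmetic difference is that where the paper does an explicit case analysis on $|\hyper\setminus\hypo|\in\{3,4\}$ and whether $\skewpair{i}=\skewpair{j}$ to compute $|(T\symdiff T')\cap[n]|$, you instead use the parity constraint coming from $T,T'\in\cT_n^{1-\sigma}$ together with the bound $|T\setminus T'|\le|\hyper\setminus\hypo|$ to reach the same conclusion that the resulting Wick sum has $0$, $2$, or $4$ terms.
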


\begin{proof}
    Let $\eqcls{\varphi} = \WtoGP'(\eqcls{\psi})$.
    By Theorem~\ref{thm:support}, the support of $\varphi$ is the set of bases of an even antisymmetric matroid associated with the underlying orthogonal matroid of $\psi$.
    Thus, \ref{item:rGP3} holds.
    One can check \ref{item:rGP4} by the same argument as in the proof of Lemma~\ref{lem:rGP4}.

    It remains to show \ref{item:rGP2'}.
    Let $\hyper$ be a hyper-transversal and $\hypo$ be a hypo-transversal 
    such that $|\hyper \setminus \hypo| = 3$ or $4$.
    There are elements $i,j\in \ground$ such that $\skewpair{i}\subseteq \hyper$ and $\skewpair{j}\cap \hypo = \emptyset$.
    We may assume that $\hyper \setminus \{i^*\}$ and $\hypo \cup\{j\}$ are transversal bases since otherwise every term in the restricted Grassmann--Pl\"ucker relation regarding $\hyper$ and $\hypo$ vanishes by Theorem~\ref{thm:support}.
    Let $T := \hyper \setminus \{i\}$ and $T' := \hypo \cup \{j^*\}$, which are transversals.
    In the proof of Lemma~\ref{lem:rGP2}, we showed that the restricted Grassmann--Pl\"ucker relation for $\varphi$ regarding $\hyper$ and $\hypo$ is implied by the Wick relation for $\psi$ regarding $T$ and $T'$.

The same proof shows that the weak restricted Grassmann--Pl\"ucker relation \ref{item:rGP2'} is implied by the weak Wick relation \ref{item:W2'}. To be precise, 
\begin{itemize}
    \item when $|\hyper \setminus \hypo| = 3$,  
    it is easy to see that $|(T\symdiff T') \cap [n]| = |T\setminus T'| = 2$; 
    \item when $|\hyper \setminus \hypo|=4$ and $\skewpair{i} = \skewpair{j}$, 
    we also have $|(T\symdiff T') \cap [n]| = 2$; 
    \item when  $|\hyper \setminus \hypo|=4$ and $\skewpair{i} \ne \skewpair{j}$, 
    we have $|(T\symdiff T') \cap [n]| = 2$ or $4$. 
\end{itemize}
The cases $|(T\symdiff T') \cap [n]| = 2$ correspond to $2$-term Wick relations, which hold trivially. The case $|(T\symdiff T') \cap [n]| = 4$ corresponds to a $4$-term Wick relation, which holds due to \ref{item:W2'}.
\end{proof}

\subsubsection{Weak restricted GP functions to weak circuit sets}
\label{sec:GPtoO2}

\begin{prop}\label{prop:GPtoO2}
    $\GPtoO'(\eqcls{\varphi})$ is a weak orthogonal $F$-circuit set of an orthogonal matroid for any weak restricted Grassmann--Pl\"{u}cker $F$-function $\varphi$.
\end{prop}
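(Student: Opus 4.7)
The plan is to follow the proof of Proposition~\ref{prop:GPtoO} in outline, adjusting for the weaker hypothesis on $\varphi$, and to factor the argument through the intermediate notion of a weak orthogonal $F$-signature (Definition~\ref{def: weak ortho}) suggested by the diagram preceding the statement. First, the supports of vectors in $\cC := \GPtoO'(\eqcls{\varphi})$ form the circuit set of an orthogonal matroid by exactly the same argument as Lemma~\ref{lem:GPtoOsupport}, which depends only on the support of $\varphi$ being an even antisymmetric matroid; this hypothesis is built into the definition of a weak rGP function.

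Next, I would show via Proposition~\ref{prop: weak rGP to weak ortho} that $\cC$ is a weak orthogonal $F$-signature of this orthogonal matroid. Each axiom of Definition~\ref{def: weak ortho} should be translated, using the formulas for $X_S$ and $Y_{S'}$ together with Lemma~\ref{lem: vector from hypo-transversal}, into an instance of the restricted Grassmann--Pl\"ucker relation applied to a hyper-/hypo-transversal pair $(\hyper, \hypo)$ with $|\hyper \setminus \hypo| \le 4$; such relations are exactly what \ref{item:rGP2'} provides. More concretely: the orthogonality on a modular pair of the second kind corresponds to the two-term case (as in the computation in Proposition~\ref{prop:GPtoO}, where $S_1 = B\cup\{f_1\}$ and $S' = B\setminus\{f_2^*\}$ give $|S_1 \setminus S'|=2$); the elimination axiom \ref{item:L1} for a modular pair of the first kind corresponds to the three-term case; and the elimination axiom \ref{item:L2} for a modular triple of the second kind corresponds to the four-term case. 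In each case one arranges $\hypo$ to contain a common basis $B$ of the modular family and lets $\hyper$ differ from $\hypo$ in the elements controlling the nonzero terms, so that the $\varphi$-products produced by the rGP relation are precisely the coordinate products that must sum to something in $N_F$.

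Finally, I would invoke Proposition~\ref{prop: weak ortho to weak circuit}, which asserts that every weak orthogonal $F$-signature is a weak $F$-circuit set, to conclude that $\GPtoO'(\eqcls{\varphi})$ is a weak $F$-circuit set of an orthogonal matroid.

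The main obstacle is the sign bookkeeping in the second step: one must verify that the signs coming from the definitions of $X_S$ and $Y_{S'}$ match up correctly with the signs $(-1)^{\smaller{\hyper \symdiff \hypo}{i}}$ in the rGP relation, using \ref{item:rGP4} to transfer signs between almost-transversals that differ by a swap of skew pairs. This was already the most delicate point in the proof of Proposition~\ref{prop:GPtoO}, and the analogous computation (with more terms) for the three- and four-term cases will form the bulk of the proof of Proposition~\ref{prop: weak rGP to weak ortho}. A secondary subtlety is to check that the vector furnished by the elimination (the ``$X_3$'' or ``$X_4$'' in \ref{item:L1} and \ref{item:L2}) can be identified with some $X_S \in \cC$, which will rely on Lemma~\ref{lem:hypo/hyper-transversals} to guarantee that the corresponding hyper-transversal carries a nonzero value of $\varphi$.
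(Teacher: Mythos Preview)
Your high-level architecture---factor through Proposition~\ref{prop: weak rGP to weak ortho} and then apply Proposition~\ref{prop: weak ortho to weak circuit}---is exactly the paper's. But your elaboration of Proposition~\ref{prop: weak rGP to weak ortho} is muddled: Definition~\ref{def: weak ortho} consists of the single axiom \ref{item:O4'} (orthogonality for $4$-modular pairs), not the axioms \ref{item:O2'}, \ref{item:L1}, \ref{item:L2}. Those are the weak $F$-circuit set axioms, and deriving them from \ref{item:O4'} is precisely what Proposition~\ref{prop: weak ortho to weak circuit} does. So your ``more concretely'' paragraph is actually sketching a direct attack on the weak circuit axioms, after which the final appeal to Proposition~\ref{prop: weak ortho to weak circuit} would be redundant.

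More importantly, you miss the actual crux. The sign bookkeeping is \emph{not} the obstacle: the translation of $\langle X_S,Y_{S'}\rangle\in N_F$ for $|S\setminus S'|\le 4$ into an instance of \ref{item:rGP2'} (the paper's Lemma~\ref{lem:GPtoO2-orthogonality}) is literally the same computation as in Proposition~\ref{prop:GPtoO}. What is genuinely new is that, given arbitrary $X,Y\in\cC$ whose supports are a $4$-modular pair, you only know $X=cX_{S_1}$ and $Y=c'X_{S_2}$ for \emph{some} hyper-transversals $S_1,S_2$, which a priori bear no relation to the transversals $T_1,T_2$ witnessing $4$-modularity; to invoke Lemma~\ref{lem:GPtoO2-orthogonality} you must replace them by $X_S,Y_{S'}$ with $S=T_1\cup\{e_1^*\}$ and $S'=T_2\setminus\{e_2^*\}$. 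In the strong case this replacement is free once \ref{item:O} is established, by Lemma~\ref{lem:scalar}; here \ref{item:O} is not available, and the paper devotes three lemmas (Lemmas~\ref{lem:GPtoO2-well-definedness1}--\ref{lem:GPtoO2-well-definedness3}) to proving, from \ref{item:rGP2'} alone, that $X_S$ depends only on its support up to a nonzero scalar. The paper flags this explicitly as ``not simple'' and as the main divergence from the strong case. Your ``secondary subtlety'' about identifying the eliminated vector $X_3$ or $X_4$ is beside the point; the real difficulty is pinning down the \emph{given} vectors $X_1,X_2$.
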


The proof consists of two steps. First, we introduce weak orthogonal $F$-signatures (Def.~\ref{def: weak ortho}) and show that they are weak $F$-circuit sets (Prop.~\ref{prop: weak ortho to weak circuit}).
Then, it remains to prove that $\GPtoO'(\eqcls{\varphi})$ is a weak orthogonal $F$-signature (Prop.~\ref{prop: weak rGP to weak ortho}).

In order to define weak orthogonal $F$-signatures, we introduce a notion generalizing modular pairs of circuits.

\begin{defn}
    For an orthogonal matroid, we say that a transversal $T$ \emph{carries} a circuit $C$ if $C \subseteq T$ and $T \symdiff \skewpair{x}$ is a basis for each $x\in C$.
    For a nonnegative integer $k$, a pair of circuits $C_1$, $C_2$ is said to be \emph{$k$-modular} if there are transversals $T_1$, $T_2$ such that 
    \begin{itemize}
        \item $T_i$ carries $C_i$ for each $i=1,2$, and 
        \item $|T_1\cap T_2^*| \le k$.
    \end{itemize}
\end{defn}

We have the following observation.

\begin{lem}
    If a transversal $T$ carries a circuit $C$, then $T\in \cT_n^{1-\sigma}$, where $\sigma\in\{0,1\}$ is the parity of $|B\cap[n]^*|$ with any basis $B$.
    \qed
\end{lem}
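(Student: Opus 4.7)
The plan is to observe that since $C$ is a circuit, it is nonempty (the empty set is a subtransversal contained in every basis, so it cannot be a minimal subtransversal avoiding all bases, provided bases exist). Hence I can pick any element $x \in C$. By the definition of ``$T$ carries $C$'', the set $B := T \symdiff \skewpair{x}$ is a basis of the orthogonal matroid, so $|B \cap [n]^*| \equiv \sigma \pmod{2}$.

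The remaining step is a one-line parity count: passing from $B$ to $T = B \symdiff \skewpair{x}$ swaps the unique element of $\skewpair{x} \cap [n]^*$ with its partner in $[n]$, so $T \cap [n]^*$ and $B \cap [n]^*$ differ by exactly one element. Therefore
\[
|T \cap [n]^*| \equiv |B \cap [n]^*| + 1 \equiv \sigma + 1 \equiv 1-\sigma \pmod{2},
\]
which is the desired conclusion $T \in \cT_n^{1-\sigma}$. There is no real obstacle here; the only thing to be slightly careful about is the non-emptiness of $C$, which is why the minimality condition in the definition of a circuit is used.
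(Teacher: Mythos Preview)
Your proof is correct and is exactly the straightforward parity argument the paper has in mind; the paper itself gives no proof beyond a bare \qed, treating the statement as an immediate observation.
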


The above lemma implies that $|T_1\cap T_2^*|$ is even for any transversals $T_1$, $T_2$ carrying circuits.

In particular, we use the following observation frequently in the proof of Proposition~\ref{prop: weak ortho to weak circuit}.

\begin{lem}\label{lem: vector carried by T}
    Let $\cC$ be an $F$-signature of an orthogonal matroid.
    If $T = B\symdiff \skewpair{e}$ for some basis $B$ and element $e\in B^*$, there is a vector $X\in \cC$ such that $\ul{X}$ is carried by $T$ and $X(e) \ne 0$.
\end{lem}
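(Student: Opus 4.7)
The plan is to construct the required vector directly from a fundamental circuit. First, I would unpack the hypothesis: since $e \in B^*$, we have $e^* \in B$ and $e \notin B$, so $T = B \setminus \{e^*\} \cup \{e\}$ is itself a transversal containing $e$ with $T \symdiff \skewpair{e} = B$ a basis. This places us exactly in the setup of Lemma~\ref{lem:two circuits}(1), which yields a unique circuit $C \subseteq T$ of the orthogonal matroid, explicitly $C = \{x \in T : T \symdiff \skewpair{x} \in \cB\}$. In particular, $e \in C$ because $T \symdiff \skewpair{e} = B \in \cB$.

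Next, the carrying condition is satisfied for free: by this very description, $C \subseteq T$ and $T \symdiff \skewpair{x} \in \cB$ for every $x \in C$, which is precisely what it means for $T$ to carry $C$. Finally, because $\cC$ is an $F$-signature of the orthogonal matroid, Definition~\ref{def:sign} guarantees that each circuit of the matroid arises as the support of some vector in $\cC$. Choosing any $X \in \cC$ with $\ul{X} = C$ then produces a vector whose support is carried by $T$ and whose $e$-entry is nonzero, since $e \in C = \ul{X}$.

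The argument is essentially bookkeeping, and no real obstacle is expected; the only delicate point is recognizing that the hypothesis $e \in B^*$ translates exactly into the fundamental-circuit setup of Lemma~\ref{lem:two circuits}, after which the definition of \emph{carried} and the $F$-signature axiom combine immediately.
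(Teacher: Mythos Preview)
Your argument is correct and is exactly the paper's approach spelled out in detail: the paper's one-line proof simply says ``take $X\in\cC$ with $\ul{X}=\FC(B,e)$,'' and your unpacking of Lemma~\ref{lem:two circuits}(1) together with the definition of \emph{carried} is precisely what justifies that line.
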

\begin{proof}
    Take a vector $X\in\cC$ with $\ul{X} = \FC(B,e)$.
\end{proof}

By Lemma~\ref{lem:two circuits}, two circuits are $0$-modular if and only if they are equal. 
The following lemma explains why $k$-modularity is a generalization of modular pairs.

\begin{lem}\label{lem: modular vs 2-modular}
    Two distinct circuits $C_1$ and $C_2$ are $2$-modular if and only if they are modular. 
    Moreover, $C_1$ and $C_2$ are $2$-modular and $C_1\cap C_2^* \ne \emptyset$ if and only if they are modular of the second kind.
\end{lem}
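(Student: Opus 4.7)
The plan is to prove the two biconditionals separately; the first requires most of the work, and the second will follow quickly once I invoke Lemma~\ref{lem: modular pair of the second kind}. For the easy direction of the first statement (modular $\Rightarrow$ $2$-modular), I would take a basis $B$ and distinct $e_1, e_2 \in B^*$ realizing the modularity of $C_1$ and $C_2$, and set $T_i := B \symdiff \skewpair{e_i}$. Each $T_i$ is a transversal carrying $C_i$ by the definition of fundamental circuit and Lemma~\ref{lem:two circuits}. Since $e_1, e_2$ are distinct elements of the transversal $B^*$, the skew pairs $\skewpair{e_1}$ and $\skewpair{e_2}$ are disjoint, so $T_1 \symdiff T_2 = \skewpair{e_1} \cup \skewpair{e_2}$ and $|T_1 \cap T_2^*| = 2$.

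For the converse, I would suppose $T_1, T_2$ carry $C_1, C_2$ with $|T_1 \cap T_2^*| \le 2$. Since this quantity is even (as noted in the paper), it equals $0$ or $2$. The case $0$ gives $T_1 = T_2$, forcing $C_1 = C_2$ by uniqueness of the circuit carried by a transversal (Lemma~\ref{lem:two circuits}), contradicting distinctness. So $|T_1 \cap T_2^*| = 2$; write $T_1 \setminus T_2 = \{a, b\}$ (so $T_2 \setminus T_1 = \{a^*, b^*\}$) and $S := T_1 \cap T_2$. The heart of the proof is the pair of identities
\[
T_1 \symdiff \skewpair{a} = T_2 \symdiff \skewpair{b} = S \cup \{a^*, b\}, \qquad T_1 \symdiff \skewpair{b} = T_2 \symdiff \skewpair{a} = S \cup \{a, b^*\},
\]
which, combined with the ``carried'' characterization $C_i = \{j \in T_i : T_i \symdiff \skewpair{j} \in \cB\}$ from Lemma~\ref{lem:two circuits}, yield the crucial equivalences
\[
a \in C_1 \iff b^* \in C_2, \qquad b \in C_1 \iff a^* \in C_2.
\]
Because $C_1 \ne C_2$ and $C_2$ is the unique circuit contained in $T_2$, I cannot have $C_1 \subseteq S \subseteq T_2$; hence $C_1 \cap \{a, b\} \ne \emptyset$. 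Say $a \in C_1$ (the other case is symmetric). Then $b^* \in C_2$, and $B := T_1 \symdiff \skewpair{a} = T_2 \symdiff \skewpair{b}$ is a basis with $a, b^* \in B^*$ distinct; moreover $C_1 = \FC(B, a)$ and $C_2 = \FC(B, b^*)$ since each $C_i$ is the unique circuit in $T_i$. So $C_1, C_2$ is modular.

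The second biconditional then follows quickly. The $(\Leftarrow)$ direction is immediate from the first biconditional combined with Lemma~\ref{lem: modular pair of the second kind}, which guarantees $|C_1 \cap C_2^*| = 2$ in the second-kind case. For $(\Rightarrow)$, since $C_1$ and $C_2$ are subtransversals I have $C_1 \cap C_1^* = \emptyset$, so $C_1 \cap C_2^* \ne \emptyset$ forces $C_1 \ne C_2$; the first biconditional gives modularity, and being of the first kind would make $C_1 \cup C_2$ a subtransversal, forcing $C_1 \cap C_2^* = \emptyset$, a contradiction. The hard part will be establishing the crucial equivalences $a \in C_1 \iff b^* \in C_2$ and its twin: they rule out all ``mismatched'' configurations and are what make $|T_1 \cap T_2^*| = 2$ force a common basis for $C_1$ and $C_2$. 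The rest is bookkeeping with transversals and skew pairs.
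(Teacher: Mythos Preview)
Your proof is correct and in fact more streamlined than the paper's. Both arguments handle the easy direction (modular $\Rightarrow$ $2$-modular) and the second biconditional in essentially the same way. The difference lies in the hard direction. The paper proceeds by case analysis on whether $C_1 \cap C_2^*$ is empty or equals $T_1 \cap T_2^*$; in the empty case it invokes the basis exchange axiom (applied to $B_1 = T_1 \symdiff \skewpair{a}$ and $B_2 = T_2 \symdiff \skewpair{b}$ for suitable $a \in C_1 \setminus C_2$, $b \in C_2 \setminus C_1$) to reach a contradiction. You bypass this entirely: your identities $T_1 \symdiff \skewpair{a} = T_2 \symdiff \skewpair{b}$ and $T_1 \symdiff \skewpair{b} = T_2 \symdiff \skewpair{a}$, combined with the description of carried circuits in Lemma~\ref{lem:two circuits}, give the equivalences $a \in C_1 \Leftrightarrow b^* \in C_2$ and $b \in C_1 \Leftrightarrow a^* \in C_2$ directly, and the uniqueness of the circuit in $T_2$ then forces $C_1 \cap \{a,b\} \ne \emptyset$ without any exchange argument. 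What your approach buys is a shorter, more elementary proof that does not appeal to the exchange axiom; what the paper's case split buys is that it simultaneously identifies which kind (first or second) the resulting modular pair is, though you recover this information anyway via Lemma~\ref{lem: modular pair of the second kind} in the second biconditional.
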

\begin{proof}
    Suppose that $C_1$ and $C_2$ are modular. Then there are a basis $B$ and distinct elements $e_1,e_2\in B^*$ such that $C_i = \FC(B,e_i)$ with $i=1,2$.
    Then the transversal $T_i := B\symdiff \skewpair{e_i}$ carries $C_i$ for each $i$, and $|T_1\cap T_2^*| = 2$. Thus, the two circuits are $2$-modular.

    Suppose that $C_1$ and $C_2$ are $2$-modular. Then there are transversals $T_1$, $T_2$ such that each $T_i$ carries $C_i$ and $|T_1\cap T_2^*| \le 2$. 
    Because $C_1 \ne C_2$, we have $|T_1\cap T_2^*| =2$.
    Note that $C_1 \cap C_2^* = \emptyset$ or $C_1 \cap C_2^* = T_1 \cap T_2^*$, since $|C_1 \cap C_2^*| \ne 1$ (the orthogonality of circuits).

    First, we consider the case $C_1 \cap C_2^* = T_1 \cap T_2^*$.
    Denote $T_1\cap T_2^* = \{e_1,e_2^*\}$.
    Then $B := T_1 \symdiff \skewpair{e_1} = T_2 \symdiff \skewpair{e_2}$ is a basis and $C_i = \FC(B,e_i)$ for each $i$.
    Thus, the circuits are modular of the second kind.

    Second, we consider the case $C_1 \cap C_2^* = \emptyset$.
    Denote $T_1\cap T_2^* = \{x,y\}$. Suppose that $C_1$ and $C_2$ are subsets of $T_1 \setminus \{x,y\} = T_2 \setminus \{x^*,y^*\}$.
    Because $C_1\ne C_2$, there are elements $a\in C_1 \setminus C_2$ and $b\in C_2 \setminus C_1$.
    Then $B_1 := T_1 \symdiff \skewpair{a}$ and $B_2 := T_2 \symdiff \skewpair{b}$ are bases, and $B_1 \setminus B_2 = \{a^*, b, x, y\}$.
    By the exchange axiom for bases, there is an element $e \in \{a^*,b,x\}$ such that both $B_1 \symdiff \skewpairs{e}{y}$ and $B_2 \symdiff \skewpairs{e}{y}$ are bases.
    However, one can check that:
    \begin{itemize}
        \item $B_1\symdiff\skewpairs{a}{y} = T_1\symdiff \skewpair{y}$ is not a basis since $y\notin C_1$;
        \item $B_2\symdiff\skewpairs{b}{y} = T_2\symdiff \skewpair{y}$ is not a basis since $y\notin C_2$; and 
        \item $B_1\symdiff\skewpairs{x}{y} = T_2\symdiff \skewpair{a}$ is not a basis since $a\notin C_2$.
    \end{itemize}
    Therefore, $C_1$ or $C_2$ cannot be a subset of $T_1 \setminus \{x,y\}$.
    By symmetry, we may assume that $x\in C_1$.
    Then $T_1 \symdiff \skewpair{x} =: B$ is a basis, and hence $C_1 = \FC(B,x)$.
    Because $T_2 \symdiff \skewpair{y} = B$, we have $C_2 = \FC(B,y^*)$.
    Therefore, $C_1$ and $C_2$ are modular of the first kind.
\end{proof}

\begin{defn}\label{def: weak ortho}
    A \emph{weak orthogonal $F$-signature}\footnote{As mentioned in Remark~\ref{rmk: weak ortho}, it is weaker than the weak orthogonal $F$-signatures defined in~\cite{JK}.} of an orthogonal matroid $M$ over a tract $F$ is an $F$-signature $\cC$ of $M$ over $F$ satisfying:
    \begin{enumerate}[label=(O${}_4'$)]
        \item\label{item:O4'} $\langle X, Y \rangle \in N_F$ for all $X,Y\in \cC$ such that $\ul{X}$ and $\ul{Y}$ are a $4$-modular pair.
    \end{enumerate}
\end{defn}

\begin{prop}\label{prop: weak ortho to weak circuit}
    Every weak orthogonal $F$-signature is a weak $F$-circuit set.
\end{prop}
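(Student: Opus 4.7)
The plan is to verify each of the three axioms (O${}_2'$), (L-i$'$), and (L-ii$'$) defining a weak $F$-circuit set separately, starting from the single hypothesis (O${}_4'$). The first axiom is immediate: by Lemma~\ref{lem: modular vs 2-modular}, any modular pair of the second kind is in particular $2$-modular, hence $4$-modular, so (O${}_4'$) applied to such a pair directly yields $\langle X, Y \rangle \in N_F$.

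For (L-i$'$), suppose $X_1, X_2 \in \cC$ have $\ul{X}_i = \FC(B, e_i)$ for a common basis $B$ and $e_i \in B^*$, and $f \in \ul{X}_1 \cap \ul{X}_2$ satisfies $X_1(f) + X_2(f) \in N_F$. Since $f \in \FC(B, e_1)$, the set $B' := B \symdiff \skewpairs{e_1}{f}$ is a basis. Using that the modular pair is of the \emph{first kind} (so $\ul{X}_1 \cap \ul{X}_2^* = \emptyset$), one checks $e_2 \in (B')^*$, so Lemma~\ref{lem: vector carried by T} produces a vector $X_3' \in \cC$ supported on $\FC(B', e_2)$; this circuit avoids $f$ by the first-kind hypothesis. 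The plan is to take $X_3 := cX_3'$ with $c \in F^\times$ determined (via Lemma~\ref{lem:scalar}) so that $(X_1 + X_2 + X_3)(g_0) \in N_F$ at a single convenient coordinate $g_0$. To verify the sum lies in $(N_F)^{\ground}$ at every other coordinate $g$, I would pair the proposed linear combination with an auxiliary vector $Y_g \in \cC$, chosen as a fundamental-circuit vector through $g^*$ relative to a carefully selected basis, so that each of the pairs $(\ul{X}_i, \ul{Y}_g)$ and $(\ul{X}_3, \ul{Y}_g)$ is $4$-modular; then (O${}_4'$) supplies three orthogonality relations whose sum isolates the coordinate $g$.

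For (L-ii$'$), the argument is parallel but needs more bookkeeping. Given a modular triple of the second kind with $\ul{X}_i = \FC(B, e_i)$, Lemma~\ref{lem: modular pair of the second kind} ensures each $B \symdiff \skewpairs{e_i}{e_j}$ is a basis; performing two successive pairwise exchanges on $B$ yields a basis whose fundamental circuit through a suitable element avoids $e_1^*, e_2^*, e_3^*$. This provides a candidate $X_4' \in \cC$, from which $X_4$ is obtained by rescaling to satisfy $X_1 + X_2 + X_3 + X_4 \in (N_F)^{\ground}$ at one coordinate, with the remaining coordinates verified by the same pairing-with-$Y_g$ technique as in (L-i$'$).

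The main obstacle is this last step in each case: for every test coordinate $g$, one must actually produce an auxiliary $Y_g \in \cC$ whose support is $4$-modular with \emph{all} of the $\ul{X}_i$ simultaneously, so that (O${}_4'$) may be invoked. This reduces to a matroid-combinatorial claim, namely that one can exhibit a single transversal carrying a fundamental circuit through $g$ and lying within symmetric-distance $4$ of each of the transversals $B \symdiff \skewpair{e_i}$. Threading the $4$-modularity bound through repeated basis exchanges, using Lemma~\ref{lem:hypo/hyper-transversals} and the orthogonal-matroid exchange axiom, is the technical heart of the argument and the subtle reason why the weakening (O${}_4'$) still suffices to imply (L-i$'$) and (L-ii$'$).
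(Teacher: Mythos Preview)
Your overall plan coincides with the paper's: for (L-i$'$) and (L-ii$'$) one constructs the missing circuit vector as a fundamental-circuit vector of a suitably exchanged basis, normalizes it, and then verifies the sum lies in $(N_F)^{\ground}$ coordinatewise by pairing with auxiliary circuit vectors and invoking \ref{item:O4'}. Your candidate $\FC(B',e_2)$ with $B'=B\symdiff\skewpairs{e_1}{f}$ is in fact the very circuit the paper uses (it equals $\FC(B\symdiff\skewpairs{e_2}{f},e_1)$, carried by $T_3:=B\symdiff\{e_1,e_1^*,e_2,e_2^*,f,f^*\}$), so the construction is right.

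There is, however, a real gap in the verification step. In a tract one cannot ``sum'' three elements of $N_F$; what one can do is convert a \emph{two-term} relation in $N_F$ into an equality in $F$ and substitute it into a longer relation. For generic $g$ the paper pairs with $Y$ carried by $B\symdiff\skewpair{g}$; then $\langle X_i,Y\rangle$ ($i=1,2$) is two-term and yields $X_i(g)Y(g^*)=-X_i(e_i)Y(e_i^*)$, while $\langle X_3,Y\rangle\in N_F$ has the three nonzero terms $X_3(e_1)Y(e_1^*)+X_3(e_2)Y(e_2^*)+X_3(g)Y(g^*)$. Substituting to isolate $g$ requires \emph{both} $X_3(e_1)=-X_1(e_1)$ and $X_3(e_2)=-X_2(e_2)$. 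Your single normalization at $g_0$ gives only one of these; the second needs a separate bootstrap, which the paper carries out with a different auxiliary $Z$ supported on $\FC(B,f^*)$ (note $\ul{X}_3$ and $\ul{Z}$ are $2$-modular, and the chain of two-term equalities uses the hypothesis $X_1(f)+X_2(f)\in N_F$). The same issue recurs in (L-ii$'$): one must first establish $X_4(e_i)=-X_i(e_i)$ for all three $i$ before the generic-$g$ argument applies. This bootstrap is the actual subtlety; by contrast, the ``main obstacle'' you flag, namely exhibiting a $Y_g$ that is $4$-modular with every $\ul{X}_i$, is immediate: $Y$ carried by $B\symdiff\skewpair{g}$ is already $2$-modular with each $\ul{X}_i$ and $4$-modular with $\ul{X}_3$ (resp.\ $\ul{X}_4$).
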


\begin{proof}
    \ref{item:O2'} is a special case of \ref{item:O4'} due to Lemma~\ref{lem: modular vs 2-modular}.

    \vspace{0.2cm}

    \ref{item:L1}:
    Let $X_1,X_2\in \cC$ such that $\underline{X}_1$ and $\underline{X}_2$ are a modular pair of the first kind.
    Then, there are a transversal basis $B$ and distinct elements $e_1,e_2\in B^*$ such that such that $e_2^*\notin \underline{X}_1 = \FC(B,e_1)$ and $e_1^*\notin \underline{X}_2 = \FC(B,e_2)$.
    Note that each $\underline{X}_i$ is carried by $B \symdiff \skewpair{e_i}$.
    Let $f \in \underline{X}_1 \cap \underline{X}_2$, and suppose that $X_1(f) + X_2(f) \in N_F$.

    Let $T_3 := B \symdiff \{e_1,e_1^*,e_2,e_2^*,f,f^*\}$ and let $X_3 \in \cC$ be a vector such that $\ul{X}_3$ is carried by $T_3$ and $X_3(e_1) = -X_1(e_1)$.
    Because $T_3\symdiff\skewpair{e_1}$ is a basis, such $X_3$ exists by Lemma~\ref{lem: vector carried by T}.

    We claim $X_1 + X_2 + X_3 \in (N_F)^{\ground}$.

    When $g \in (\ground) \setminus T_3\setminus\{f\}$, we have $X_1(g) = X_2(g) = X_3(g) = 0$. 

    When $g=f^*$, we have $X_1(f^*) = X_2(f^*)=0$. We still have $X_3(f^*) = 0$ because $B\symdiff\skewpairs{e_1}{e_2}$ is not a basis. 

    When $g=f$, we have  $X_1(f)+X_2(f)\in N_F$ by the given assumption and $X_3(f) = 0$ as $f \notin T_3$. 

    When $g=e_1$, we have $X_2(e_1) = 0$ and $X_1(e_1)+X_3(e_1)\in N_F$ by the given assumption. 

    When $g=e_2$, we have $X_1(e_2)=0$. Then we show that $X_3(e_2) = - X_2(e_2)$.
    Let $Z \in \cC$ be a vector with $\ul{Z}=\FC(B,f^*)$.
    Then 
    \begin{align*}
        X_3(e_2) Z(e_2^*) &= -X_3(e_1) Z(e_1^*) 
        = X_1(e_1) Z(e_1^*) 
        = -X_1(f) Z(f^*) 
        = X_2(f) Z(f^*) 
        = -X_2(e_2) Z(e_2^*),
    \end{align*}
    where the first, third, and fifth equalities hold because $\langle X_3, Z \rangle$, $\langle X_1, Z \rangle$, and $\langle X_2, Z \rangle$ are in $N_F$ by \ref{item:O2'}, respectively.
    As $Z(e_2^*) \ne 0$, we have $X_3(e_2) = - X_2(e_2)$.
 
    When $g \in T_3 \setminus \{e_1,e_2,f^*\}$, let $T_4 := B \symdiff \skewpair{g}$ and let $Y \in \cC$ be such that $\ul{Y}$ is carried by $T_4$. 
    Note that $T_3\cap T_4^* = \{e_1,e_2,f^*,g\}$, and $X_3(f^*) = 0$ because $T_3\symdiff\skewpair{f}$ is not a basis.
    By \ref{item:O4'}, we have
    \[
        X_3(e_1) Y(e_1^*) + X_3(e_2) Y(e_2^*) + X_3(g) Y(g^*) 
        =
        \langle X_3, Y \rangle
        \in N_F,
    \]
    For each $i=1,2$, we have $X_i(e_i) Y(e_i^*) + X_i(g) Y(g^*) = \langle X_i,Y \rangle \in N_F$ by \ref{item:O2'}.
    Then $X_3(e_i) Y(e_i^*) = -X_i(e_i) Y(e_i^*) = X_i(g) Y(g^*)$.
    Since $Y(g^*) \ne 0$, we deduce that $X_1(g) + X_2(g) + X_3(g) \in N_F$.

    \vspace{0.2cm}

    \ref{item:L2}: 
    Let $X_1,X_2,X_3\in \cC$ such that $\underline{X}_1,\underline{X}_2,\underline{X}_3$ are a modular triple of the second kind.
    Then there are a transversal basis $B$ and distinct elements $e_1,e_2,e_3\in B^*$ such that $e_{i+1}^*, e_{i+2}^* \in \underline{X}_i = \FC(B,e_i)$ for each $i=1,2,3$, where the subscripts are read modulo $3$.
    Note that each $\underline{X}_i$ is carried by $B \symdiff \skewpair{e_i}$.
    Suppose that $X_{i+1}(e_i^*) + X_{i+2}(e_i^*) \in N_F$ for each $i$.
    
    Let $T_4 := B \symdiff \{e_1,e_1^*,e_2,e_2^*,e_3,e_3^*\}$ and $X_4$ be a vector such that $T_4$ carries $\ul{X}_4$ and $X_4(e_1) = -X_1(e_1)$. Because $T_4 \symdiff \skewpair{e_1}$ is a basis, such $X_4$ exists by Lemma~\ref{lem: vector carried by T}. We claim that \[X_4(e_i) = -X_i(e_i) \text{ for } i=1,2,3.\]
    The case $i=1$ is given. We will only show the case $i=2$ as the other case is similar.    
    Note that $X_4$ and $X_3$ are a modular pair by Lemma~\ref{lem: modular vs 2-modular}. Then
    \begin{align*}
        X_2(e_2) X_1(e_2^*)
        =
        - X_2(e_1^*) X_1(e_1)
        =
        - X_3(e_1^*) X_4(e_1)
        =
        X_3(e_2^*) X_4(e_2)
        =
        - X_1(e_2^*) X_4(e_2),
    \end{align*}
    where the first and third equality hold because $\langle X_2, X_1 \rangle \in N_F$ and $\langle X_3, X_4 \rangle \in N_F$ by \ref{item:O2'}, respectively.
    Since $X_1(e_2^*) \ne 0$, we have $X_2(e_2) = -X_4(e_2)$.

    Now we are ready to prove $X_1 + X_2 + X_3 + X_4 \in (N_F)^{\ground}$.
    
    When  $g \in (\ground) \setminus (T_4\cup\{e_1^*,e_2^*,e_3^*\})$, we have $X_1(g) = X_2(g) = X_3(g) = X_4(g) = 0$.

    When $g=e_i^*$ for some $i\in\{1,2,3\}$, we have $X_{1}(g) + X_{2}(g) + X_{3}(g) + X_{4}(g) = X_{i+1}(e_i^*) + X_{i+2}(e_i^*) \in N_F$ by the given assumption, where the subscripts $i+1$ and $i+2$ are read modulo $3$. 

    When $g=e_i$ for some $i\in\{1,2,3\}$, $X_{1}(g) + X_{2}(g) + X_{3}(g) + X_{4}(g) = X_{i}(e_i) + X_{4}(e_i) \in N_F$.

    When $g\in T_4 \setminus \{e_1,e_2,e_3\}$, let $Y$ be a vector such that $\ul{Y}$ is carried by $T_5:= B\symdiff \skewpair{g}$. Note that $T_4 \cap T_5^* = \{e_1,e_2,e_3,g\}$. By \ref{item:O4'}, we have 
    \[
        X_4(e_1) Y(e_1^*) + X_4(e_2) Y(e_2^*) + X_4(e_3) Y(e_3^*) +X_4(g) Y(g^*) 
        =
        \langle X_4, Y \rangle \in N_F. 
    \] 
    For each $i=1,2,3$, we have $X_i(e_i)Y(e_i^*) + X_i(g)Y(g^*) = \langle X_i, Y \rangle \in N_F$ by \ref{item:O2'}. Hence $X_4(e_i) Y(e_i^*) = - X_i(e_i) Y(e_i^*) = X_i(g) Y(g^*)$.
    Since $Y(g^*) \ne 0$, we deduce that $X_1(g) + X_2(g) + X_3(g) + X_4(g) \in N_F$.
\end{proof}

We now aim to prove the following result. 
\begin{prop}\label{prop: weak rGP to weak ortho}
    $\GPtoO'(\varphi)$ is a weak orthogonal $F$-signature of an orthogonal matroid for any weak restricted GP $F$-function $\varphi$.
\end{prop}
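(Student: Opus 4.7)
The plan is to mirror the proof of Proposition~\ref{prop:GPtoO}: I will represent $X$ and $Y$ as $cX_S$ and $c'Y_{S'}$ for a hyper-transversal $S$ and a hypo-transversal $S'$, and then identify $\langle X,Y\rangle$ with a restricted Grassmann--Pl\"ucker relation on $(S,S')$. The new twist is that \ref{item:rGP2'} only produces relations with $|S\setminus S'|\le 4$, so the real task is to arrange $(S,S')$ to meet this bound. The support portion of the claim, namely that $\cC := \GPtoO'(\eqcls{\varphi})$ is an $F$-signature of the orthogonal matroid associated with $\supp\varphi$, will follow from Lemma~\ref{lem:GPtoOsupport}; its argument invokes only Theorem~\ref{thm:support} and Lemma~\ref{lem:two circuits}, and the required even antisymmetric matroid structure of $\supp\varphi$ is built into the definition of a weak restricted Grassmann--Pl\"ucker function.

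To verify \ref{item:O4'}, fix $X,Y\in\cC$ whose supports form a $4$-modular pair, and pick transversals $T_1, T_2$ carrying $\ul X, \ul Y$ with $|T_1\cap T_2^*|\le 4$. Set $A := T_1\cap T_2$ and $B := T_1\cap T_2^*$, so $T_1 = A\sqcup B$, $T_2 = A\sqcup B^*$, and $|B|\in\{0,2,4\}$ (the parity of $|B|$ is forced by $T_1$ and $T_2$ lying in the same $\cT_n^{1-\sigma}$, which follows from each carrying bases of the orthogonal matroid). If $\ul X\subseteq A$ or $\ul Y\subseteq A$, then a direct inspection of the decompositions gives $\ul X\cap\ul Y^* = \emptyset$, and hence $\langle X,Y\rangle = 0\in N_F$. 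Otherwise, I select $e\in\ul X\cap B$ and $f\in\ul Y\cap B^*$ with $f\ne e^*$; the only potential obstruction is the configuration $|\ul X\cap B|=|\ul Y\cap B^*|=1$ and $f=e^*$, but this would force $|\ul X\cap\ul Y^*|=1$, contradicting the orthogonality of circuits of an orthogonal matroid (which is the $F=\bK$ instance of Theorem~\ref{thm:JK}).

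With such $e$ and $f$ in hand, set $S := T_1\cup\{e^*\}$ and $S' := T_2\setminus\{f\}$. Then $X = cX_S$ (since $e\in\ul X$), and by Lemma~\ref{lem: vector from hypo-transversal} applied to $S_2 := T_2\cup\{f^*\}$, also $Y = c'Y_{S'}$ for suitable $c,c'\in F^\times$. A short set-theoretic computation using $A\cap B^* = \emptyset$, $e^*\notin T_1$, and $f\in B^*\setminus\{e^*\}$ yields $S\cap S' = A\cup\{e^*\}$, whence $|S\setminus S'| = (n+1)-(|A|+1) = |B|\le 4$. The weak relation \ref{item:rGP2'} therefore applies to $(S,S')$, and repeating the sign calculation from the proof of Proposition~\ref{prop:GPtoO} identifies the resulting sum with $\pm(cc')^{-1}\langle X,Y\rangle$; hence $\langle X,Y\rangle\in N_F$. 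The main obstacle is the subcase $|B|=4$, where the bound $|S\setminus S'|\le 4$ is tight and forces the precise combinatorial arrangement $e\in B$ and $f\in B^*\setminus\{e^*\}$; this is where the orthogonality of circuits of the underlying orthogonal matroid is essential.
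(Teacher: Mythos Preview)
Your overall strategy matches the paper's: locate $S,S'$ with $|S\setminus S'|\le 4$ and invoke \ref{item:rGP2'}. The combinatorics of choosing $e\in\ul X\cap B$ and $f\in\ul Y\cap B^*$ with $f\ne e^*$, and the count $|S\setminus S'|=|B|$, are all fine.

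The genuine gap is the step ``Then $X=cX_S$ (since $e\in\ul X$)''. The element $X$ lies in $\cC$, so by definition $X=c_0 X_{S_0}$ for \emph{some} hyper-transversal $S_0$; there is no reason $S_0$ should equal your $S=T_1\cup\{e^*\}$. What you need is that $X_{S_0}$ and $X_S$ are proportional whenever $\ul{X}_{S_0}=\ul{X}_S$. In the strong setting this is obtained \emph{a posteriori} from Lemma~\ref{lem:scalar} once full orthogonality \ref{item:O} is established, but here you are still in the middle of proving the weak orthogonality \ref{item:O4'}, so that route is closed. The paper devotes three lemmas (Lemmas~\ref{lem:GPtoO2-well-definedness1}--\ref{lem:GPtoO2-well-definedness3}) to proving this well-definedness directly from \ref{item:rGP2'}, \ref{item:rGP3}, \ref{item:rGP4}, and explicitly flags it as the main new difficulty in the weak case. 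The same issue bites your claim $Y=c'Y_{S'}$: Lemma~\ref{lem: vector from hypo-transversal} only gives $Y_{S'}=\pm X_{S_2}$ for $S_2=T_2\cup\{f^*\}$, and you still must identify $Y$ with a scalar multiple of $X_{S_2}$.
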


Recall that $\GPtoO'(\varphi)$ is an $F$-signature as defined in 
\S\ref{sec:GPtoO} except that $\varphi$ is strong there. We still adopt the notation $X_S$ and $Y_S'$ for a hyper-transversal $S$ and a hypo-transversal $S'$. 

\begin{lem}\label{lem:GPtoO2-orthogonality}
    Let $S$ be a hyper-transversal and $S'$ be a hypo-transversal. If $|S\setminus S'| \le 4$, then $\bilin{X_S}{Y_{S'}} \in N_F$.
\end{lem}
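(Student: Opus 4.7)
The plan is direct: unravel the definitions and recognize that $\bilin{X_S}{Y_{S'}}$ is literally the left-hand side of the restricted Grassmann--Pl\"ucker relation attached to the pair $(S, S')$, then invoke \ref{item:rGP2'}, which is applicable precisely because of the hypothesis $|S\setminus S'|\le 4$. No homotopy or exchange argument seems needed; the content is essentially a sign bookkeeping check.

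First, I would determine the support of the sum $\bilin{X_S}{Y_{S'}} = \sum_{i\in\ground} X_S(i)\, Y_{S'}(i^*)$. By construction, $X_S(i)=0$ unless $i\in S$, and $Y_{S'}(i^*)=0$ unless $i^*\notin (S')^*$, i.e.\ $i\notin S'$. Therefore only indices $i\in S\setminus S'$ contribute. Substituting the defining formulas gives
\[
\bilin{X_S}{Y_{S'}} \;=\; \sum_{i\in S\setminus S'} (-1)^{\smaller{S}{i}+\smaller{S'}{i}}\, \varphi(S\setminus\{i\})\,\varphi(S'\cup\{i\}).
\]

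Next, I would verify that the sign here matches the one in \ref{item:rGP2'}. Since $\smaller{S}{i}+\smaller{S'}{i}=\smaller{S\symdiff S'}{i}+2\smaller{S\cap S'}{i}$, the two quantities have the same parity, and so
\[
(-1)^{\smaller{S}{i}+\smaller{S'}{i}} = (-1)^{\smaller{S\symdiff S'}{i}}.
\]
Consequently
\[
\bilin{X_S}{Y_{S'}} \;=\; \sum_{i\in S\setminus S'} (-1)^{\smaller{S\symdiff S'}{i}}\, \varphi(S\setminus\{i\})\,\varphi(S'\cup\{i\}),
\]
which is exactly the restricted Grassmann--Pl\"ucker relation indexed by the hyper-transversal $S$ and the hypo-transversal $S'$. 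Since $|S\setminus S'|\le 4$ by hypothesis, axiom \ref{item:rGP2'} yields $\bilin{X_S}{Y_{S'}}\in N_F$, as desired.

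The argument is almost entirely definitional, so there is no real obstacle; the only point requiring care is the parity identification between $\smaller{S}{i}+\smaller{S'}{i}$ and $\smaller{S\symdiff S'}{i}$, which is immediate from the above decomposition.
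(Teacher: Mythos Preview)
Your proof is correct and follows essentially the same approach as the paper, which simply points back to the computation in the second paragraph of the proof of Proposition~\ref{prop:GPtoO} and replaces the appeal to \ref{item:rGP2} by \ref{item:rGP2'}. The only difference is that you make the parity identity $\smaller{S}{i}+\smaller{S'}{i}\equiv \smaller{S\symdiff S'}{i}\pmod 2$ explicit, whereas the paper leaves it implicit.
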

\begin{proof}
    The proof proceeds as in the second paragraph of the proof of Proposition~\ref{prop:GPtoO}.
\end{proof}

To derive Proposition~\ref{prop: weak rGP to weak ortho} from the above lemma, we have to prove that a vector $X=X_S\in\GPtoO'$ depends only on the support $\ul{X}$ rather than the choice of $S$. This is different from the strong case, where we can prove that $\cC$ is orthogonal directly and obtain this ``well-definedness'' by Lemma~\ref{lem:scalar}. The task is not simple, and we split it into the following three Lemmas. 
In these lemmas, a ``basis'' means a basis of the underlying even antisymmetric matroid of $\varphi$.

\begin{lem}\label{lem:GPtoO2-well-definedness1}
    Let $T$ be a transversal and $x_1,x_2$ be elements in $T$ such that $T\symdiff\skewpair{x_1}$ is a basis.
    Then $X_{T\cup\{x_2^*\}}  = \alpha X_{T\cup\{x_1^*\}}$ for some $\alpha\in F$.
    In particular, $\alpha \ne 0$ if $T\symdiff\skewpair{x_2}$ is a basis.
\end{lem}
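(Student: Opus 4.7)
The plan is to establish the proportionality by applying the weak restricted Grassmann--Pl\"ucker relation \ref{item:rGP2'} in a family of instances indexed by $y$. Denote $S_i := T \cup \{x_i^*\}$. We may assume $x_1 \ne x_2$, since otherwise $S_1 = S_2$ and the claim is trivial with $\alpha = 1$.

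First I would dispose of the ``easy'' coordinates $y \in \{x_1^*, x_2^*\}$: one of $X_{S_1}(y), X_{S_2}(y)$ is $0$ because $y \notin S_i$, and the other equals $\pm \varphi(T)$, which vanishes by \ref{item:rGP3} since $T \in \cT_n^{1-\sigma}$ (as the basis $B_1 := T \symdiff \skewpair{x_1}$ lies in $\cT_n^\sigma$). Because $B_1$ is a basis, $X_{S_1}(x_1) = \pm \varphi(B_1) \in F^\times$, so I can set $\alpha := X_{S_2}(x_1)/X_{S_1}(x_1)$. The remaining task is to verify $X_{S_2}(y) = \alpha\, X_{S_1}(y)$ for every $y \in T \setminus \{x_1\}$.

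For each such $y$, I would apply \ref{item:rGP2'} to $\hyper := S_2$ and $\hypo := B_1 \setminus \{y\}$. A direct calculation yields $\hyper \setminus \hypo = \{x_1, y, x_2^*\}$ (so $|\hyper \setminus \hypo| = 3$ and \ref{item:rGP2'} applies) and $\hyper \symdiff \hypo = \{x_1, x_1^*, y, x_2^*\}$. The summand at $i = x_2^*$ vanishes because $\varphi(\hyper \setminus \{x_2^*\}) = \varphi(T) = 0$. The remaining two summands at $i = x_1$ and $i = y$, after substituting $\varphi(S_j\setminus \{k\}) = (-1)^{\smaller{S_j}{k}} X_{S_j}(k)$ and $\varphi(B_1) = (-1)^{\smaller{S_1}{x_1}} X_{S_1}(x_1)$, yield
\[
(-1)^{a(y)}\, X_{S_2}(x_1)\, X_{S_1}(y) \;+\; (-1)^{b(y)}\, X_{S_1}(x_1)\, X_{S_2}(y) \;\in\; N_F
\]
for explicit exponents $a(y), b(y)$. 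Rearranging and dividing by $X_{S_1}(x_1) \in F^\times$ gives $X_{S_2}(y) = (-1)^{1 + a(y) + b(y)}\, \alpha\, X_{S_1}(y)$.

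The main obstacle is the sign tracking: one must confirm that $a(y) + b(y) \equiv 1 \pmod{2}$ independently of $y \in T \setminus \{x_1\}$. Using $S_1 \symdiff S_2 = \{x_1^*, x_2^*\}$, one gets $\smaller{S_1}{z} + \smaller{S_2}{z} \equiv \indicator{x_1^* < z} + \indicator{x_2^* < z} \pmod{2}$ for every $z$; combined with the combinatorics of $\hyper \symdiff \hypo = \{x_1, x_1^*, y, x_2^*\}$ and the identity $\indicator{y < x_1} + \indicator{x_1 < y} = 1$ (as $y \ne x_1$), the $y$-dependent terms cancel modulo $2$, leaving $a(y) + b(y) \equiv 1$. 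This establishes $X_{S_2} = \alpha\, X_{S_1}$. For the ``in particular'' claim, if $T \symdiff \skewpair{x_2}$ is a basis, then $X_{S_2}(x_2) = \pm \varphi(T \symdiff \skewpair{x_2}) \ne 0$; combined with $X_{S_2}(x_2) = \alpha\, X_{S_1}(x_2)$, this forces $\alpha \ne 0$.
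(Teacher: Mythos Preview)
Your proposal is correct and takes essentially the same approach as the paper: both arguments apply the three-term weak restricted GP relation to the pair $(S_2,\, S')$ with $S' = B_1 \setminus \{y\}$ (the paper writes this as $S_1\setminus\{x_1,y\}$), though the paper packages it through Lemma~\ref{lem:GPtoO2-orthogonality} and the auxiliary vector $Y_{S'}$, which absorbs the sign tracking you carry out by hand. For the final clause, your observation that $X_{S_2}(x_2) = \pm\varphi(T\symdiff\skewpair{x_2}) \ne 0$ forces $\alpha\ne 0$ is slightly more economical than the paper's route, which appeals to Theorem~\ref{thm:support} to see that $S_2\setminus\{x_1\}$ is a basis.
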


\begin{proof}
    We may assume that $x_1\ne x_2$.
    Let $S_1 := T \cup \{x_1^*\}$ and $S_2 := T \cup \{x_2^*\}$.
    Denote $X_1 := X_{S_1}$ and $X_2 := X_{S_2}$ for simplicity.
    We will prove $X_{2}(y) = \alpha X_{1}(y)$ for all $y\in T \setminus \{x_1\}$, where $\alpha := \frac{X_{2}(x_1)}{X_{1}(x_1)}$. Note that $X_{1}(x_1)$ is nonzero because $S_1\setminus\{x_1\}$ is a basis. 
    
    Let $S' := S_1 \setminus \{x_1,y\}$. Note that $S_2 \setminus S' = \{x_1,x_2^*,y\}$. We denote $Y := Y_{S'}$. Then by Lemma~\ref{lem:GPtoO2-orthogonality}, we have    
    \begin{align*}
        \bilin{X_{S_1}}{Y_{S'}} 
        &= X_1(x_1) Y(x_1^*) + X_1(y) Y(y^*) \in N_F, \\
        \bilin{X_{S_2}}{Y_{S'}}
        &= X_2(x_1) Y(x_1^*) + X_2(x_2^*)Y(x_2) + X_2(y) Y(y^*) \in N_F.
    \end{align*}
    Note that $X_2(x_2^*) = 0$ because $S_2\setminus\{x_2^*\} = T$ is not a basis, and $Y(y^*) \ne 0$ because $S'\cup\{y\} = S_1\setminus\{x_1\}$ is a basis.
    Thus, from the above two relations, we deduce that
    \begin{align*}
        \frac{X_1(y)}{X_1(x_1)} = -\frac{Y(x_1^*)}{Y(y^*)}
        \quad\text{ and }\quad
        X_2(y) = -\frac{Y(x_1^*)}{Y(y^*)} X_2(x_1)
        = \frac{X_1(y)}{X_1(x_1)} X_2(x_1) = \alpha X_1(y).
    \end{align*}

    Lastly, if $T\symdiff\skewpair{x_2}$ and $T\symdiff\skewpair{x_1}$ are both bases, then $S_2\setminus\{x_1\}$ is a basis by Theorem~\ref{thm:support}. Therefore, $X_{2}(x_1)\ne 0$ and hence $\alpha = \frac{X_{2}(x_1)}{X_{1}(x_1)} \ne 0$.
    \qedhere

\end{proof}

\begin{lem}\label{lem:GPtoO2-well-definedness2}
    Let $S_1$ be a hyper-transversal and $x,x^*,y_1,\dots,y_{2k}$ be distinct elements in $S_1$ such that $S_1\setminus\{x\}$ is a transversal basis and $S_1\setminus\{y_i\}$ is not a basis for any $i=1,2,\dots,2k$.
    Let $S_2 := S_1\symdiff\{y_1,y_1^*,y_2,y_2^*,\dots,y_{2k},y_{2k}^*\}$.
    If $S_2 \setminus\{x\}$ is a basis, then $X_{S_2} = \alpha X_{S_1}$ for some $\alpha\in F^\times$.
\end{lem}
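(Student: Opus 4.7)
The first step is to pin down the common support. By Lemma~\ref{lem:two circuits}(2), $\ul{X_{S_1}}$ equals the unique circuit $C_1\subseteq S_1$ of the underlying even antisymmetric matroid, and $x\in C_1$ since $S_1\setminus\{x\}=B_1\in\cB$. The hypothesis $S_1\setminus\{y_i\}\notin\cB$ combined with Theorem~\ref{thm:support} forces $B_1\symdiff\skewpair{y_i}\notin\cB$ (for the underlying orthogonal matroid), so $y_i\notin C_1$ for each $i$. Hence $C_1\subseteq S_1\setminus\{y_1,\dots,y_{2k}\}\subseteq S_2$, and the uniqueness of the circuit in $S_2$ containing $x$ yields $C_1=\ul{X_{S_2}}=:C$. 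Moreover, a parity argument (all bases of an orthogonal matroid have $|B\cap[n]|$ of fixed parity, and toggling a single skew pair flips this parity) gives $B_2\symdiff\skewpair{y_i}\notin\cB$ automatically, confirming $y_i^*\notin C$.

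Next, I would induct on $k$. The case $k=0$ is trivial. For $k\ge 2$, apply the symmetric exchange axiom to the orthogonal matroid bases $B_1,B_2$ (whose symmetric difference consists of the skew pairs $\skewpair{y_1},\dots,\skewpair{y_{2k}}$) with the pair $\skewpair{y_1}$, obtaining some $j\ge 2$ (relabel so that $j=2$) with both $B_3:=B_1\symdiff\{y_1,y_1^*,y_2,y_2^*\}$ and $B_2\symdiff\{y_1,y_1^*,y_2,y_2^*\}$ bases. Let $S_3:=B_3\cup\{x\}$. The same parity argument yields $B_3\symdiff\skewpair{y_i}\notin\cB$ for $i\ge 3$, so the pair $(S_3,S_2)$ fulfills the lemma's hypotheses with index set $\{y_3,\dots,y_{2k}\}$ (that is, $k$ replaced by $k-1$); by the inductive hypothesis $X_{S_2}=\gamma X_{S_3}$ for some $\gamma\in F^\times$, reducing everything to the case $k=1$.

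The genuine difficulty is the base case $k=1$: the transversal bases $B_1$ and $B_3$ differ by two skew pairs $\{y_1,y_1^*,y_2,y_2^*\}$ yet share no common transversal base, so Lemma~\ref{lem:GPtoO2-well-definedness1} does not apply directly. The plan is to invoke the weak rGP relation~\ref{item:rGP2'} with $|\hyper\setminus\hypo|\in\{3,4\}$ for carefully chosen $\hyper$ (for example, $(B_1\symdiff\skewpair{y_1})\cup\{x\}$) and $\hypo$ (for example, $B_3\setminus\{y_1^*\}$). For each $i\in C\setminus\{x\}$, the aim is to arrange $\hyper,\hypo$ so that exactly two terms of the resulting rGP sum are nonzero, namely those involving $\varphi(B_1)\varphi(S_3\setminus\{i\})$ and $\varphi(B_3)\varphi(S_1\setminus\{i\})$, with the remaining terms vanishing because the relevant $\hyper\setminus\{j\}$ or $\hypo\cup\{j\}$ fails to be a basis of the even antisymmetric matroid (checked via Theorem~\ref{thm:support} together with the parity argument). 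This 2-term identity in $N_F$, together with sign tracking along the ordering convention analogous to the computation in the proof of Lemma~\ref{lem:composition1}, gives $X_{S_3}(i)/X_{S_3}(x)=X_{S_1}(i)/X_{S_1}(x)$, and hence $X_{S_3}=\beta X_{S_1}$ for some $\beta\in F^\times$. Composing with $X_{S_2}=\gamma X_{S_3}$ closes the induction. The main obstacle is arranging the rGP relation so that only the two intended terms survive, which requires an intricate case analysis exploiting the almost-transversal basis criterion of Theorem~\ref{thm:support}.
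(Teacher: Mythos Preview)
Your overall inductive structure—reduce $k\ge 2$ to $k-1$ and $k=1$ via the exchange axiom, then treat $k=1$ directly using a weak rGP relation with at most four terms—is exactly the paper's strategy. Two concrete issues prevent the proposal from going through as written.

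\textbf{The parity argument does not do what you claim.} Observing that $B_3\symdiff\skewpair{y_i}$ lies in $\cT_n^{1-\sigma}$ only tells you it is not a \emph{transversal} basis; it says nothing about whether the almost-transversal $S_3\setminus\{y_i\}$ is a basis of the even antisymmetric matroid, which is what the lemma's hypotheses require. The paper fixes this by reversing your order: it first applies the $k=1$ case to $(S_1,S_3)$ to obtain $X_{S_3}=\beta X_{S_1}$, whence $X_{S_3}(y_i)=\beta X_{S_1}(y_i)=0$ and therefore $S_3\setminus\{y_i\}$ is not a basis; only then does it invoke the inductive hypothesis with $k-1$ on $(S_3,S_2)$. (An alternative fix, consistent with your first paragraph, is to note $C\subseteq S_3$ and use uniqueness of the circuit in $S_3$.) The same misuse of parity appears in your first paragraph; fortunately the conclusion $y_i^*\notin C$ there already follows from $C=C_1\subseteq S_1$ and $y_i^*\notin S_1$, so no separate argument is needed.

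\textbf{The base case $k=1$: right idea, wrong witnesses.} Your suggested pair $\hyper=(B_1\symdiff\skewpair{y_1})\cup\{x\}$, $\hypo=B_3\setminus\{y_1^*\}$ gives $\hyper\setminus\hypo=\{x,y_1^*,y_2\}$, and one checks that all three terms vanish (the $e=x$ term has $\varphi(B_1\symdiff\skewpair{y_1})=0$ by \ref{item:rGP3}; the $e=y_1^*$ term has $\varphi(S_1\setminus\{y_1\})=0$ by hypothesis). The pair that actually produces the two surviving terms you want is simply $\hyper=S_1$ and $\hypo=S_2\setminus\{x,i\}$: then $\hyper\setminus\hypo=\{x,i,y_1,y_2\}$, the terms for $e=y_1,y_2$ vanish because $\varphi(S_1\setminus\{y_1\})=\varphi(S_1\setminus\{y_2\})=0$, and the terms for $e=x$ and $e=i$ are precisely $\varphi(B_1)\varphi(S_2\setminus\{i\})$ and $\varphi(S_1\setminus\{i\})\varphi(B_2)$. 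This is exactly the paper's argument, phrased there as $\bilin{X_{S_1}}{Y_{S'}}\in N_F$ with $S'=S_2\setminus\{x,z\}$; the inner-product formulation via Lemma~\ref{lem:GPtoO2-orthogonality} absorbs all sign bookkeeping, so no ``intricate case analysis'' is required.
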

\begin{proof}
    We proceed by induction on $k\ge 1$. 
    For simplicity, we denote $X_1 := X_{S_1}$ and $X_2 := X_{S_2}$.
    
    We first check the base case, so assume $k=1$, i.e., $S_2 = S_1\symdiff \skewpairs{y_1}{y_2}$.
    We claim that $X_{2} = \alpha X_{1}$ where $\alpha := \frac{X_{2}(x)}{X_{1}(x)} \in F^\times$.

    Note that $X_1(x^*) = X_{2}(x^*) = 0$ because neither $S_1\setminus\{x^*\}$ nor $S_2 \setminus\{x^*\}$ is a basis.
    Also, $X_1(y_1) = X_1(y_2) = 0$ because neither $S_1\setminus\{y_1\}$ nor $S_1\setminus\{y_2\}$ is a basis.
    Suppose that $X_2(y_1^*) \ne 0$, i.e., $S_2\setminus\{y_1^*\}$ is a basis.
    Then $S_1\setminus \{x^*\} \symdiff \skewpair{y_2}$ is a basis by Theorem~\ref{thm:support}.
    As $S_1\setminus\{x\}$ is a basis, $S_1\setminus\{y_2\}$ is a basis by Theorem~\ref{thm:support}, a contradiction.
    Thus, $X_{2}(y_1^*) =0$ and similarly $X_{2}(y_2^*)=0$.

    If $S_1 = \{x,x^*,y_1,y_2\}$, then the supports of $X_{1}$ and $X_{2}$ are $\{x\}$, which implies that $X_{2} = \alpha X_{1}$.
    Therefore, we may assume $S_1$ is a proper superset of $\{x,x^*,y_1,y_2\}$.
    Choose an arbitrary element $z \in S_1 \setminus \{x,x^*,y_1,y_2\}$. 
    Let $S' := S_2 \setminus \{x,z\}$, which is a hypo-transversal.
    Then $S_1\setminus S' = \{x,z,y_1,y_2\}$.
    Denote $Y := Y_{S'}$.
    By Lemma~\ref{lem:GPtoO2-orthogonality}, 
    \begin{align*}
        \bilin{X_{S_2}}{Y}
        &= X_{2}(x) Y(x^*) + X_{2}(z) Y(z^*) \in N_F, \\
        \bilin{X_{S_1}}{Y}
        &= X_{1}(x) Y(x^*) + X_{1}(z) Y(z^*) + X_{1}(y_1) Y(y_1^*) + X_{1}(y_2) Y(y_2^*) \\
        &= X_{1}(x) Y(x^*) + X_{1}(z) Y(z^*) \in N_F.
    \end{align*}
    Note that $Y(z^*) \ne 0$ because $S' \cup \{z\} = S_2 \setminus \{x\}$ is a basis.
    Therefore, 
    \[
        X_i(z) = -\frac{Y(x^*)}{Y(z^*)} X_i(x)    
    \]
    for each $i=1,2$, implying that $X_{2}(z) = \alpha X_{1}(z)$.

    Next, assume that $k\ge 2$.
    Denote $B_1 :=S_1 \setminus \{x\}$ and $B_2 := S_2 \setminus \{x\}$, which are transversal bases.
    By the exchange axiom of orthogonal matroids (by identifying the given even antisymmetric matroid with an orthogonal matroid through Theorem~\ref{thm:support}), there are distinct elements $y_i,y_j\in B_1 \setminus B_2$ such that $B_3 := B_1\symdiff\skewpairs{y_i}{y_j}$ is a basis.
    Denote $S_3 := B_3 \cup \{x\}$.
    By the induction hypothesis, $X_3 := X_{S_3} = \beta X_{S_1}$ for some $\beta\in F^\times$.
    Then for each $s \in [k] \setminus \{i,j\}$, we have $X_{3}(y_s) = 0$, so $S_3 \setminus \{y_s\}$ is not a basis.
    Hence, we can apply the induction hypothesis again to deduce that  $X_{2} = \beta' X_{3}$ for some $\beta'\in F^\times$.
    Therefore, $X_{2} = \beta' \beta X_{1}$.
\end{proof}

\begin{lem}\label{lem:GPtoO2-well-definedness3}
    Let $S_1$ and $S_2$ be hyper-transversals of $\ground$.
    If the supports of ${X}_{S_1}$ and ${X}_{S_2}$ are the same, then ${X}_{S_1} = \alpha {X}_{S_2}$ for some $\alpha\in F^\times$.
\end{lem}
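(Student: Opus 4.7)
Let $C := \ul{X}_{S_1} = \ul{X}_{S_2}$. The plan is to reduce to the two special cases already handled by Lemma~\ref{lem:GPtoO2-well-definedness1} (changing the ``extra'' element $x^*$ within a fixed transversal $T$) and Lemma~\ref{lem:GPtoO2-well-definedness2} (flipping free skew pairs with a fixed apex $x$). Since $X_{S_i}\ne 0$, Lemma~\ref{lem:hypo/hyper-transversals}(1) lets me pick, for each $i$, an element $a_i$ in the skew pair of $S_i$ so that $B_i := S_i\setminus\{a_i\}$ is a transversal basis; then $a_i\in C$, and in particular $a_1, a_2\in S_1\cap S_2$.

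First I would reduce, up to a nonzero scalar, to the case where $S_1$ and $S_2$ share the same skew pair. If $\skewpair{a_1}\ne\skewpair{a_2}$, then $a_2\notin\{a_1,a_1^*\}$, so $a_2$ lies in the transversal $T_1:=S_1\setminus\{a_1^*\}$. Applying Lemma~\ref{lem:GPtoO2-well-definedness1} with $T=T_1, x_1=a_1, x_2=a_2$ gives $X_{T_1\cup\{a_2^*\}} = \alpha X_{S_1}$, and the new hyper-transversal $S_1':=T_1\cup\{a_2^*\}$ then shares its skew pair $\skewpair{a_2}$ with $S_2$. The subtle point is ensuring $\alpha\ne 0$, i.e.\ that $T_1\symdiff\skewpair{a_2}$ is a transversal basis. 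Because $a_2\in C$, the almost-transversal $S_1\setminus\{a_2\}$ is a basis of the underlying even antisymmetric matroid, so by the bijection in Theorem~\ref{thm:support} one of two candidate pairs of transversals must lie in $\cB$. One candidate pair consists of $T_1$ and $B_1\symdiff\skewpair{a_2}$, both lying in $\cT_n^{1-\sigma}$ and hence disqualified from being transversal bases; therefore the other pair, which is $\{B_1,\,T_1\symdiff\skewpair{a_2}\}$, must lie in $\cB$, supplying what we need.

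Next, I would align the apices. This step is nontrivial only if the common skew pair is $\skewpair{a_1}$ and $a_2=a_1^*$; in that case $a_1\in C\subseteq S_2$ allows the harmless re-choice $B_2:=S_2\setminus\{a_1\}$. After this adjustment one has $S_1=B_1\cup\{a\}$ and $S_2=B_2\cup\{a\}$ with a common apex $a$, and $B_1,B_2\in\cT_n^\sigma$ both containing $a^*$. A routine check shows $(C\cup C^*)\cap(B_1\symdiff B_2)=\emptyset$: for $e\in C\setminus\{a\}$ one has $e\in B_1\cap B_2$, so $e^*\notin B_1\cup B_2$; the pair $\{a,a^*\}$ is likewise excluded. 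Parity of $|B_i\cap[n]^*|$ further forces the number of differing skew pairs of $B_1\symdiff B_2$ to be even, say $2k$.

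Finally, I would apply Lemma~\ref{lem:GPtoO2-well-definedness2} with $x=a$ and the $2k$ representatives $z_1,\dots,z_{2k}\in B_1$ of the skew pairs in $B_1\symdiff B_2$; each $z_j$ lies in $S_1$ outside $C$, so $S_1\setminus\{z_j\}\notin\cB$, matching the hypotheses. This yields $X_{S_2}=\gamma X_{S_1}$ for some $\gamma\in F^\times$ which, together with the scalar from the first step, completes the proof. The main obstacle is the parity argument securing the crucial basis $T_1\symdiff\skewpair{a_2}\in\cB$; the remaining steps are basis re-choices and routine set-theoretic verifications.
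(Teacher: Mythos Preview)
Your approach is essentially the paper's: first use Lemma~\ref{lem:GPtoO2-well-definedness1} to match the skew pairs (you move $S_1$ toward $S_2$, the paper moves $S_2$ toward $S_1$, which is symmetric), then invoke Lemma~\ref{lem:GPtoO2-well-definedness2}; your parity argument via Theorem~\ref{thm:support} to secure the needed transversal basis is exactly what the paper abbreviates as ``by Theorem~\ref{thm:support}''.

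One small slip in your ``align apices'' step: the subcase $a_2=a_1^*$ is in fact vacuous, and your proposed fix would not work if it weren't. If $a_2=a_1^*$, then $a_1,a_1^*\in C$, so both $S_2\setminus\{a_1\}$ and $S_2\setminus\{a_1^*\}$ would be bases; but these two transversals lie in opposite parity classes $\cT_n^\sigma$ and $\cT_n^{1-\sigma}$, contradicting \ref{item:rGP3}. In particular your ``harmless re-choice'' $B_2:=S_2\setminus\{a_1\}$ lands in $\cT_n^{1-\sigma}$, so it is not a transversal basis and your later claim ``$B_1,B_2\in\cT_n^\sigma$'' would fail in that branch. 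The paper records the same conclusion with the one-line ``Note that we have $x=y$ in this case''; you should simply observe the contradiction and discard the subcase rather than attempt to handle it.
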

\begin{proof}
    Let $\skewpair{x}$ and $\skewpair{y}$ be the unique skew pairs in $S_1$ and $S_2$, respectively.
    Denote $X_1 := X_{S_1}$ and $X_2 := X_{S_2}$ for simplicity.
    We may assume that $\ul{X}_1 \ne \emptyset$.
    Hence, there is $e\in S_1$ such that $S_1\setminus\{e\}$ is a basis.
    Then either $S_1 \setminus \{x\}$ or $S_1 \setminus \{x^*\}$ is a basis by Lemma~\ref{lem:hypo/hyper-transversals}.
    By interchanging $x$ and $x^*$ if necessary, we may assume that $S_1\setminus\{x\}$ is a basis.
    Similarly, we may assume that $S_2 \setminus\{y\}$ is a basis.
    Because $\ul{X}_1 = \ul{X}_{2}$, the set $S_2\setminus\{x\}$ is a basis.
    If $\skewpair{x} \ne \skewpair{y}$, then $S_2\setminus\{y^*\} \symdiff \skewpair{x}$ is a basis by Theorem~\ref{thm:support}.
    Then by Lemma~\ref{lem:GPtoO2-well-definedness1} applied to $T = S_2\setminus\{y^*\}$, the vector $X_{2}$ equals $X_{S_2\setminus\{y^*\}\cup\{x^*\}}$ up to a nonzero scalar.
    Since $S_2\setminus\{y^*\}\cup\{x^*\}$ and $S_1$ share the same skew pair, the problem has been reduced to the case $\skewpair{x} = \skewpair{y}$. Note that we have $x=y$ in this case.

    The number $|(S_1\setminus\{x\})\setminus (S_2\setminus\{x\})|$ is even because $S_1\setminus\{x\}$ and $S_2\setminus\{x\}$ are transversal bases.
    Since $\ul{X}_{1} = \ul{X}_{2}$, we have $\varphi(S_1 \setminus\{z\}) = 0$ for each $z\in S_1\setminus S_2$, which implies that $S_1\setminus\{z\}$ is not a basis.
    Therefore, $X_{1} = \alpha X_{2}$ for some $\alpha\in F^\times$ by Lemma~\ref{lem:GPtoO2-well-definedness2}.
\end{proof}

\begin{proof}[\bf Proof of Proposition~\ref{prop: weak rGP to weak ortho}]
    Let $\cC := \GPtoO'(\varphi)$. 
    Then $\supp(\cC)$ is the circuit set of the orthogonal matroid corresponding to the underlying even antisymmetric matroid of $\varphi$ 
    via the bijection in Theorem~\ref{thm:support} (cf. Lemma~\ref{lem:GPtoOsupport}).

    Let $X_1,X_2 \in \cC$ be vectors whose supports $\ul{X}_1$ and $\ul{X}_2$ are $4$-modular.
    Then there are transversals $T_i$ with $i=1,2$ such that each $T_i$ carries $\ul{X}_i$ and $|T_1 \cap T_2^*| \le 4$.
    
    We claim that $\langle X_1, X_2 \rangle \in N_F$.
    Hence we may assume that $\ul{X}_1 \cap \ul{X}_2^* \ne \emptyset$.
    Then there are distinct elements $e_1,e_2$ in $\ul{X}_1 \cap \ul{X}_2^*$.
    Let $S := T_1 \cup \{e_1^*\}$ and $S' := T_2 \setminus \{e_2^*\}$.
    Then by Lemmas~\ref{lem:GPtoO2-well-definedness3} and~\ref{lem: vector from hypo-transversal}, $X_1 = \alpha X_{S}$ and $X_2 = \beta Y_{S'}$ for some $\alpha,\beta \in F^\times$.
    We may assume that $\alpha = \beta = 1$.
    Note that $|S \setminus S'| = |T_1 \setminus T_2| \le 4$.
    Thus, $\langle X_1, X_2 \rangle \in N_F$ by Lemma~\ref{lem:GPtoO2-orthogonality}.
\end{proof}

\begin{proof}[\bf Proof of Proposition~\ref{prop:GPtoO2}]
    It is immediate from Propositions~\ref {prop: weak ortho to weak circuit} and~\ref{prop: weak rGP to weak ortho}.
\end{proof}

\subsubsection{Composing the maps}

We have shown the well-definedness of the maps $\WtoGP'$ and $\GPtoO'$ (Prop.~\ref{prop:WtoGP2} and~\ref{prop:GPtoO2}).
Hence, Theorem~\ref{thm:main-weak} follows from the following lemma.
We denote $\OtoW' := (\WtoO')^{-1}$.

\begin{lem}
    $\GPtoO' \circ \WtoGP' \circ \OtoW'$ and $\WtoGP' \circ \OtoW' \circ \GPtoO'$ are identity maps.
\end{lem}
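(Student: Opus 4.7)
The plan is to repeat the proofs of Lemmas \ref{lem:composition1} and \ref{lem:composition2} essentially verbatim, observing that all sign computations and structural steps there are combinatorial in nature and do not rely on the full-strength axioms \ref{item:W2} or \ref{item:rGP2}. The defining formulas for $\WtoGP'$, $\GPtoO'$, $\WtoO'$ coincide with those of their strong counterparts, so the only issue is the availability of the auxiliary results invoked along the way.

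First, I would establish $\GPtoO' \circ \WtoGP' = \WtoO'$ by following the proof of Lemma \ref{lem:composition1}. Given a weak Wick function $\psi$, set $\varphi := \WtoGP'(\psi)$, $\cC := \GPtoO'(\varphi)$, and $\cC' := \WtoO'(\psi)$. By Theorem \ref{thm:support}, $\cC$ and $\cC'$ share the same collection of supports. For each $B \in \supp(\psi)$ and $j \in B$, put $T := B \symdiff \skewpair{j}$ and $S := B \cup \{j^*\}$; then $X_T \in \cC'$ and $X_S \in \cC$ have the same support by Lemma \ref{lem:two circuits}, and the ratio computation based on the definitions together with \eqref{eq:WtoGP} -- combined with the purely combinatorial sign identity \eqref{eq: sign} -- gives $X_T = c X_S$ for some common scalar $c \in F^\times$. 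Hence $\cC = \cC'$ as $F$-signatures.

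For the converse $\WtoGP' \circ \OtoW' \circ \GPtoO' = \mathrm{id}$, I would adapt the proof of Lemma \ref{lem:composition2}. Write $\cC := \GPtoO'(\varphi)$, $[\psi] := \OtoW'(\cC)$, and $[\varphi'] := \WtoGP'([\psi])$. By Theorem \ref{thm:support} the underlying even antisymmetric matroids of $\varphi$ and $\varphi'$ coincide, so by Corollary \ref{cor: equalratio} it suffices to verify \eqref{eq: equalfraction} for each transversal basis $B$ and each pair $i,j \in B$ with $B \setminus \skewpair{i} \cup \skewpair{j}$ a basis. Setting $S := B \cup \{j^*\}$ and $T := B \symdiff \skewpair{j}$, the three ratio formulas displayed in the proof of Lemma \ref{lem:composition2} and the sign identity \eqref{eq: sign} carry over without change, and the problem reduces to showing that the vector $X_T$ (assigned to $T$ via $\WtoO'([\psi]) = \cC$) and the vector $X_S$ (assigned to $S$ via $\GPtoO'(\varphi)$) are proportional.

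The one step where the strong proof invokes strong-specific machinery is this proportionality, which there is supplied by Lemma \ref{lem:scalar} -- unavailable now, since $\cC$ is only a weak $F$-circuit set. This is the main obstacle, and it has already been addressed in \S\ref{sec:GPtoO2}: every element of $\cC$ is, by the definition of $\GPtoO'$, a nonzero scalar multiple of some $X_{S'}$ with $S'$ a hyper-transversal, so Lemmas \ref{lem: vector from hypo-transversal} and \ref{lem:GPtoO2-well-definedness3} applied to the pair $X_T, X_S \in \cC$ (which share the same support by Lemma \ref{lem:two circuits}) yield the required proportionality. This closes the argument.
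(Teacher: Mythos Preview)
Your proposal is correct and follows the same approach as the paper, which simply states that ``the proof is the same as those of Lemmas~\ref{lem:composition1} and~\ref{lem:composition2}.'' You go further by explicitly identifying the one step in Lemma~\ref{lem:composition2} that does not transfer verbatim---the appeal to Lemma~\ref{lem:scalar}---and supplying a self-contained replacement via Lemma~\ref{lem:GPtoO2-well-definedness3}; this is a reasonable and careful way to fill in what the paper leaves implicit (the paper's own proof of Lemma~\ref{lem:scalar} already hints that the result holds under weaker hypotheses, so its terse ``the proof is the same'' is also justifiable). The reference to Lemma~\ref{lem: vector from hypo-transversal} in your last paragraph is not actually needed for that step, but it does no harm.
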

\begin{proof}
    The proof is the same as those of Lemmas~\ref{lem:composition1} and~\ref{lem:composition2}.
\end{proof}

\begin{proof}[\bf Proof of Theorem~\ref{thm:main-weak}]
It is a direct consequence of the above lemma. 
\end{proof}

\begin{cor}\label{cor: weak ortho equals weak circuit}
    Weak orthogonal $F$-signatures and weak $F$-circuit sets of orthogonal matroids are the same.
\end{cor}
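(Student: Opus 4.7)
The plan is to apply the bijections established in Theorem~\ref{thm:main-weak} to read off the equivalence as a diagram chase. The work is all done; this corollary is essentially bookkeeping.

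Proposition~\ref{prop: weak ortho to weak circuit} already provides one inclusion: every weak orthogonal $F$-signature is a weak $F$-circuit set. So the only thing left to prove is the reverse inclusion, namely that every weak $F$-circuit set $\cC$ of an orthogonal matroid is in fact a weak orthogonal $F$-signature.

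To see this, I would compose along the cycle. Starting from a weak $F$-circuit set $\cC$, first apply $\OtoW' = (\WtoO')^{-1}$ (which exists by Theorem~\ref{thm:JK2}) to obtain a weak orthogonal $F$-matroid $[\psi]$. Then apply $\WtoGP'$ (well-defined by Proposition~\ref{prop:WtoGP2}) to get an equivalence class of weak rGP $F$-functions $[\varphi]$. Finally, apply $\GPtoO'$ to obtain $\cC' := \GPtoO'([\varphi])$. By Proposition~\ref{prop: weak rGP to weak ortho}, $\cC'$ is a weak orthogonal $F$-signature. On the other hand, by the identity statement $\GPtoO' \circ \WtoGP' \circ \OtoW' = \mathrm{id}$ from the preceding lemma, we have $\cC' = \cC$. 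Hence $\cC$ itself is a weak orthogonal $F$-signature, completing the proof.

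There is no real obstacle here, since every ingredient has been set up: the bijectivity from Theorem~\ref{thm:main-weak}, the fact that $\GPtoO'$ lands in weak orthogonal $F$-signatures (not just in weak $F$-circuit sets) by Proposition~\ref{prop: weak rGP to weak ortho}, and the inclusion from Proposition~\ref{prop: weak ortho to weak circuit}. The only conceptual point worth flagging is that the map $\GPtoO'$ was defined with target the \emph{larger} class of weak $F$-circuit sets via the factorization through weak orthogonal $F$-signatures, so the corollary is what makes this factorization an equality of classes rather than a strict inclusion.
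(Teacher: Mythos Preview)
Your proof is correct and follows essentially the same approach as the paper. The paper phrases it slightly more abstractly—observing that $\GPtoO'$ is a bijection onto weak $F$-circuit sets (by Theorem~\ref{thm:main-weak}) while landing inside weak orthogonal $F$-signatures (by Proposition~\ref{prop: weak rGP to weak ortho}), so the inclusion of Proposition~\ref{prop: weak ortho to weak circuit} is forced to be surjective—whereas you trace the same surjectivity explicitly via the composite $\GPtoO' \circ \WtoGP' \circ \OtoW' = \mathrm{id}$.
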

\begin{proof}
    By Theorem~\ref{thm:main-weak}, the map $\GPtoO'$ from the set of equivalence classes of weak restricted GP $F$-functions to the set of weak $F$-circuit sets of orthogonal matroids is bijective.
    Therefore, the inclusion map shown in Proposition~\ref{prop: weak ortho to weak circuit} must be surjective.
\end{proof}

\section{Miscellaneous}\label{sec: mis}
\subsection{Connection to enveloping matroids}\label{sec:enveloping-matroids}

Given an orthogonal matroid $M = (\ground,\cB)$, an \emph{enveloping matroid}~{\cite[p.~78]{BGW2003}} of $M$ is a matroid $M' = (\ground,\cB')$ such that $\cB' \cap \cT_n = \cB$. While every representable orthogonal matroid admits an enveloping matroid, it is unknown whether this holds for all orthogonal matroids. Motivated by this, we ask a more general question.
\begin{que}
    For which tract $F$ does the following hold?
    For any orthogonal $F$-matroid $[\psi]$, there is an $F$-matroid $[\varphi]$ such that $\varphi(T) = \psi(T)^2$ for all $T\in \cT_n$.
\end{que}

Note that when $F$ is the Krasner hyperfield, the question is exactly asking whether any orthogonal matroid admits an enveloping matroid. When $F$ is a field, from the Wick function $\psi$, we may construct a skew-symmetric matrix $\mathbf{A}$ whose Pfaffians are the values of $\psi$, and the $n$-by-$2n$ matrix $(\mathbf{I}_n \mid \mathbf{A})$ represents an $F$-matroid having the desired property. 

We shall show in Corollary~\ref{cor: envelop} that we can assign values $\varphi(A)$ to $A\in \cA_n$ so that $\varphi$ satisfies the Grassmann--Pl\"ucker relations that only involve terms in $\{\varphi(X):X\in\cT_n \cup \cA_n\}$.

The following lemma shows what these relations are, which can be proved by a simple case discussion. 
\begin{lem}\label{lem:GP on TA}
    Let $\varphi: \binom{\ground}{n} \to F$ be a function. Let
        \[
            \sum_{e\in \hyper \setminus \hypo} (-1)^{|\hyper \symdiff \hypo < e|} \varphi(\hyper\setminus \{e\}) \varphi(\hypo\cup \{e\}) \in N_F.
        \]
    be a Grassmann--Pl\"{u}cker relation, where $\hyper$ and $\hypo$ are subsets of $\ground$ such that $|\hyper|=n+1$ and $|\hypo|=n-1$. If $\hyper\setminus \{e\}$ and $\hypo\cup \{e\}$ are in $\cT_n \cup \cA_n$ for every $e\in \hyper\setminus \hypo$, then $\hyper$ and $\hypo$ satisfy exactly one of the following:
    \begin{enumerate}
        \item $\hyper$ is a hyper-transversal and $\hypo$ is a hypo-transversal (as in \ref{item:rGP2}),
        
        \item $\hyper$ is a hyper-transversal containing $\skewpair{i}$, and $\skewpair{i}\subseteq \hypo\subseteq \hyper$.
       
        \item $\hyper$ is a hyper-transversal containing $\skewpair{i}$, and $\hypo=\hyper\setminus\skewpair{i}\setminus\{k\}\cup\{j^*\}$ for distinct $k,j\in \hyper\setminus\skewpair{i}$. 

        \item $\hyper$ contains exactly two skew pairs $\skewpairs{i}{j}$, and $\hypo=\hyper\setminus D$, where $D$ is a $2$-subset of $\{i,i^*,j,j^*\}$.
        
        \item $\hyper$ contains exactly two skew pairs $\skewpairs{i}{j}$, and $\hypo=\hyper\setminus \{i,i^*,j^*\}\cup \{k\}$, where $k$ satisfies $\hyper\cap\skewpair{k}=\emptyset$.
        
        \item $\hyper$ contains exactly two skew pairs $\skewpairs{i}{j}$, and  $\hypo=\hyper\setminus \skewpairs{i}{j}\cup \skewpair{k}$, where $k$ satisfies $\hyper\cap\skewpair{k}=\emptyset$. 
        \end{enumerate}
\end{lem}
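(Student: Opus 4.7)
My plan is to reduce the statement to a systematic case analysis organised by the number $s_H$ of complete skew pairs contained in $\hyper$, together with the number $s_h$ of complete skew pairs in $\hypo$ and the size $|\hypo \setminus \hyper|$. The key observation is that an $n$-subset of $\ground$ lies in $\cT_n \cup \cA_n$ if and only if it contains at most one complete skew pair, so the hypothesis translates into precise bounds on the pair-structure of $\hyper \setminus \{e\}$ and $\hypo \cup \{e\}$.

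First I would establish $s_H \in \{1,2\}$ and $s_h \le 1$. Pigeonhole on $|\hyper| = n+1 > n$ forces $s_H \ge 1$, and if $s_H \ge 3$ then $\hyper \setminus \{e\}$ carries at least two skew pairs for every $e$, contradicting the hypothesis at any $e \in \hyper \setminus \hypo$ (nonempty because $|\hyper \setminus \hypo| \ge 2$). A dual argument on $\hypo$ gives $s_h \le 1$. Two refined constraints follow: if $s_H = 2$ with skew pairs $\skewpair{i}, \skewpair{j}$, then no singleton of $\hyper$ may lie in $\hyper \setminus \hypo$ (else the removal preserves both pairs), so every singleton of $\hyper$ belongs to $\hypo$ and $\hyper \setminus \hypo \subseteq \skewpair{i} \cup \skewpair{j}$; and if $s_h = 1$ with skew pair $\skewpair{\ell}$, then for each $e \in \hyper \setminus \hypo$ with $e \notin \skewpair{\ell}$ we must have $e^* \notin \hypo$, otherwise $\hypo \cup \{e\}$ would acquire a second skew pair.

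Under these constraints the proof splits into two branches. In the hyper-transversal case ($s_H = 1$ with pair $\skewpair{i}$), since $\hyper$ touches every skew pair, any $z \in \hypo \setminus \hyper$ has $z^* \in \hyper$, and the $e^*$-constraint forces $z^* \in \hypo$, so $\skewpair{z} \subseteq \hypo$; this pins the skew pair of $\hypo$ to $\skewpair{z}$ and forces $|\hypo \setminus \hyper| \le 1$ (two such elements would both sit in one skew pair outside $\hyper$, contradicting that $\hyper$ touches all pairs). From here, $s_h = 0$ yields Case~(1) directly; $s_h = 1$ with $|\hypo \setminus \hyper| = 0$ gives $\hypo \subseteq \hyper$ with $\skewpair{i} \subseteq \hypo$, which is Case~(2); and $s_h = 1$ with $|\hypo \setminus \hyper| = 1$ gives Case~(3), where the extra element $j^*$ outside $\hyper$ has its partner $j$ a singleton of $\hyper$, and a bookkeeping step identifies the removed singleton $k \in \hyper \setminus \skewpair{i}$. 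In the two-skew-pair case ($s_H = 2$), I would classify by $|\hypo \setminus \hyper| \in \{0,1,2\}$, matching Cases~(4), (5), (6). For $|\hypo \setminus \hyper| = 1$, partner-tracking shows that the extra element cannot be the partner of a singleton of $\hyper$ (otherwise $\skewpair{z} \subseteq \hypo$ and the $e^*$-constraint applied to the unique $u \in \hypo \cap (\skewpair{i} \cup \skewpair{j})$ fails at $e = u^*$), leaving only the configuration where the extra element lies in the unique skew pair $\skewpair{k}$ disjoint from $\hyper$; for $|\hypo \setminus \hyper| = 2$ one then argues that both extras must form $\skewpair{k}$ itself.

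The main obstacle is precisely the partner-counting in each subcase with $|\hypo \setminus \hyper| \ge 1$: tracing whether the partner of a newly introduced element sits in $\hyper \cap \hypo$, in $\hyper \setminus \hypo$, or outside $\hyper$, and systematically deriving contradictions from the hypothesis. Once these eliminations are done, verifying that each of the six listed configurations actually satisfies the hypothesis is a routine check that $\hyper \setminus \{e\}$ and $\hypo \cup \{e\}$ have at most one skew pair.
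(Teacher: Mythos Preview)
Your organisation by the skew-pair counts $s_H$ and $s_h$ is the natural route, and the paper itself leaves the proof as ``a simple case discussion.'' The reductions for $s_H=1$ and for $s_H=2$ with $|\hypo\setminus\hyper|\le 1$ are correct. However, the final step contains a genuine gap: when $s_H=2$ and $|\hypo\setminus\hyper|=2$, your assertion that ``both extras must form $\skewpair{k}$ itself'' is false.

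Take $n=4$, $\hyper=\{1,1^*,2,2^*,3\}$ and $\hypo=\{3,3^*,4\}$. Then $\hyper\setminus\hypo=\{1,1^*,2,2^*\}$, and for each $e$ in this set both $\hyper\setminus\{e\}$ and $\hypo\cup\{e\}$ are almost-transversals, so the hypothesis of the lemma holds. Yet $\hypo\setminus\hyper=\{3^*,4\}$ is not a skew pair, and one checks that $(\hyper,\hypo)$ matches none of the six listed configurations. The mechanism you used for $|\hypo\setminus\hyper|=1$ does not carry over: there the contradiction came from the $e^*$-constraint applied at the unique element $u\in\hypo\cap\skewpairs{i}{j}$, but here $\hypo\cap\skewpairs{i}{j}=\emptyset$, so for every $e\in\hyper\setminus\hypo$ one has $e^*\in\skewpairs{i}{j}$ and hence $e^*\notin\hypo$ automatically.

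This example in fact shows that the lemma as stated is incomplete: there is a seventh configuration, namely $\hyper$ containing exactly the two skew pairs $\skewpairs{i}{j}$ and
\[
\hypo=\bigl(\hyper\setminus\skewpairs{i}{j}\bigr)\cup\{\ell^*,k\},
\]
where $\ell$ is a singleton of $\hyper$ and $\hyper\cap\skewpair{k}=\emptyset$. Your case analysis is the right framework to discover this; the error is only in the unjustified closing claim. The downstream Proposition~\ref{prop:GP on TA} would likewise need this additional case verified.
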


\begin{prop}\label{prop:GP on TA}
Let $\varphi$ be a restricted Grassmann--Pl\"ucker function over a tract $F$ with $1+1-1-1\in N_F$. Then $\varphi$ satisfies all the Grassmann--Pl\"ucker relations on $\ground$ that only involve terms in $\{\varphi(X):X\in\cT_n \cup \cA_n\}$.
\end{prop}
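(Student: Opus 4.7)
The plan is to treat each of the six cases in Lemma~\ref{lem:GP on TA} in turn. Case (1) is \ref{item:rGP2} itself. Cases (2) and (4) are two-term sums: as $e$ ranges over the two elements of $\hyper\setminus\hypo$, the unordered pair $\{\hyper\setminus\{e\}, \hypo\cup\{e\}\}$ does not change, while the sign $(-1)^{|\hyper\symdiff\hypo<e|}$ flips; by commutativity of multiplication in $F$, the sum is of the form $a + (-a)$ for some $a\in F$, which lies in $N_F$ by axioms \ref{item:T3} and \ref{item:T4}.

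For cases (3) and (5), which have three-term sums with exactly one term vanishing by \ref{item:rGP3}, I would exhibit a pair $(S, S')$ — with $S$ a hyper-transversal and $S'$ a hypo-transversal — such that the restricted Grassmann--Pl\"ucker relation \ref{item:rGP2} for $(S, S')$ coincides with the target relation after vanishing. Explicitly, for case (3) take $S = \hyper\setminus\{i^*\}\cup\{j^*\}$ and $S' = \hypo\setminus\{j^*\}\cup\{i^*\}$; for case (5) take $S = \hyper\setminus\{i^*\}\cup\{k\}$ and $S' = \hypo\setminus\{k\}\cup\{i^*\}$. A direct check yields $S\symdiff S' = \hyper\symdiff\hypo$ in both constructions, so the Grassmann--Pl\"ucker signs agree; comparing the three terms on each side shows that the two non-vanishing terms of \ref{item:rGP2} for $(S, S')$ coincide term-for-term with the two non-vanishing terms of the target relation (which of the three terms vanishes depends on the parity of $\sigma$, but the resulting two-term identity is the same).

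The main obstacle is case (6), where the four-term sum consists entirely of products of almost-transversal values and no term vanishes a priori. The argument proceeds in three steps. \emph{Step one:} apply Lemma~\ref{lem: rgp4} to each $\varphi(\hyper\setminus\{e\})$, rewriting it as a signed multiple of $\varphi(\hypo\cup\{e^*\})$. A parity computation — using that swapping $e$ and $e^*$ flips both the sign $(-1)^{|A\cap[n]^*|+1-\sigma}$ from Lemma~\ref{lem: rgp4} and the sign $(-1)^{|\hyper\symdiff\hypo<e|}$ from the Grassmann--Pl\"ucker exponent — combines the two terms at $e\in\skewpair{i}$ into $2\eta A$ with $A = \varphi(\hypo\cup\{i\})\varphi(\hypo\cup\{i^*\})$, and the two terms at $e\in\skewpair{j}$ into $2\eta' B$ with $B = \varphi(\hypo\cup\{j\})\varphi(\hypo\cup\{j^*\})$. \emph{Step two:} apply \ref{item:rGP2} to the hyper-transversal $S = \hypo\cup\{i, j\}$ (whose unique skew pair is $\skewpair{k}$) and the hypo-transversal $S' = (\hypo\setminus\skewpair{k})\cup\{i^*, j^*\}$; the two terms at $e\in\skewpair{k}$ vanish by \ref{item:rGP3} (a short parity check shows that for each such $e$ the two transversal factors lie in opposite components $\cT_n^{\sigma}, \cT_n^{1-\sigma}$), and a compatibility check of signs shows the remaining two-term null relation has the form $\eta A + \eta' B \in N_F$. \emph{Step three:} the two-term null sum forces $\eta' B = -\eta A$ as elements of $F$, so the four-term formal sum from step one becomes $\eta A + \eta A + (-\eta A) + (-\eta A) = (\eta A)\cdot(1+1-1-1)$; this lies in $N_F$ by \ref{item:T4} applied to the hypothesis $1+1-1-1 \in N_F$ (the degenerate case $\eta A = 0$, which implies $\eta' B = 0$, is immediate).

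The trickiest step is the sign bookkeeping in step one of case (6) and its compatibility with step two: three sign conventions (from Lemma~\ref{lem: rgp4}, from the Grassmann--Pl\"ucker exponent, and from the parity $\sigma$) must align so that the four-term formal sum realizes the pattern $c(1+1-1-1)$ — rather than, e.g., $c(1+1+1+1)$, which would not lie in $N_F$ in general. The hypothesis $1+1-1-1 \in N_F$ is essential exactly here: it permits a two-term null sum to be ``doubled'' into a four-term null sum, a closure property not automatic in a general tract, and it captures precisely the gap between the restricted Grassmann--Pl\"ucker relations and the full family of Grassmann--Pl\"ucker relations supported on $\cT_n\cup\cA_n$.
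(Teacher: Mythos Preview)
Your handling of cases (1)--(5) is essentially the paper's argument: the same two-term cancellations for (2) and (4), and the same hyper-/hypo-transversal substitutions for (3) and (5) --- your choices $S=\hyper\setminus\{i^*\}\cup\{j^*\}$, $S'=\hypo\setminus\{j^*\}\cup\{i^*\}$ (case (3)) and $S=\hyper\setminus\{i^*\}\cup\{k\}$, $S'=\hypo\setminus\{k\}\cup\{i^*\}$ (case (5)) are exactly the paper's $S_2,S'_2$. Your remark that one of the three terms vanishes by \ref{item:rGP3} is correct but unnecessary; the paper simply matches all three terms, which works since the vanishing term appears identically on both sides.

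For case (6) you take a genuinely different and somewhat cleaner route. The paper first derives the equality of the $e=i$ and $e=j$ terms (its (5.1)) via \emph{two} auxiliary \ref{item:rGP2} relations ((5.2) and (5.3), obtained from $(i^*jj^*kZ,\,ik^*Z)$ and its $i\leftrightarrow j$ twin) and a global sign count, then symmetrizes to get (5.4), and only at the end invokes Lemma~\ref{lem: rgp4} for (5.5). You invert the order: applying Lemma~\ref{lem: rgp4} upfront collapses the four terms to $\eta A+\eta A+\eta'B+\eta'B$, and then a \emph{single} \ref{item:rGP2} relation for $(ijkk^*Z,\,i^*j^*Z)$ --- whose $e\in\skewpair{k}$ terms vanish by \ref{item:rGP3} --- yields the two-term constraint forcing $\eta'B=-\eta A$. (A careful computation actually gives the swapped form $\eta B+\eta'A\in N_F$, but since $\eta,\eta'\in\{\pm1\}$ this is equivalent.) Your route saves one \ref{item:rGP2} instance and makes the role of the hypothesis $1+1-1-1\in N_F$ completely transparent; the paper's route is more explicit about the pairwise structure of the four terms. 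Both are correct.
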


\begin{proof}
We check for the six cases in Lemma~\ref{lem:GP on TA}. 
\begin{enumerate}
    \item The first case is \ref{item:rGP2}, which holds because $\varphi$ is a restricted Grassmann--Pl\"ucker function. 
    \item In the second case, $|\hyper \setminus \hypo|=2$. Hence we have a $2$-term Grassmann--Pl\"ucker relation which is of the form $\varphi(X)\varphi(Y)-\varphi(Y)\varphi(X)\in N_F$. This obviously holds.
    \item In the third case, we have $\hyper=\{i,i^*,j,k\}\cup Z$ and $\hypo=\{j,j^*\}\cup Z$, where $Z$ is a transversal of $\ground\setminus\{i,i^*,j,j^*,k,k^*\}$. For simplicity, we write $\hyper=ii^*jkZ$, $\hypo=jj^*Z$, etc. Then the Grassmann--Pl\"ucker relation is 
    \begin{align*}
    & (-1)^{\smaller{\hyper\symdiff \hypo}{i}}\varphi(i^*jkZ)\varphi(ijj^*Z)\\
    +&(-1)^{\smaller{\hyper\symdiff \hypo}{i^*}}\varphi(ijkZ)\varphi(i^*jj^*Z)\\
    +&(-1)^{\smaller{\hyper\symdiff \hypo}{k}}\varphi(ii^*jZ)\varphi(jj^*kZ)\in N_F,
    \end{align*}
    where $\hyper\symdiff \hypo=ii^*j^*k$. 
    We claim that this relation is the same as \ref{item:rGP2} for $\hyper_2:=ijj^*kZ$ and $\hypo_2:=i^*jZ$. Indeed, $\hyper_2 \symdiff \hypo_2 = \hyper \symdiff \hypo$ and the second relation is
    \begin{align*}
    & (-1)^{\smaller{\hyper_2\symdiff \hypo_2}{k}}\varphi(ijj^*Z)\varphi(i^*jkZ)\\
    +&(-1)^{\smaller{\hyper_2\symdiff \hypo_2}{j^*}}\varphi(ijkZ)\varphi(i^*jj^*Z)\\
    +&(-1)^{\smaller{\hyper_2\symdiff \hypo_2}{i}}\varphi(jj^*kZ)\varphi(ii^*jZ)\in N_F.
    \end{align*}
    It remains to check that the signs match: \begin{align*}
    (-1)^{\smaller{\hyper\symdiff \hypo}{i}}/(-1)^{\smaller{\hyper\symdiff \hypo}{k}}&=(-1)^{\smaller{\hyper_2\symdiff \hypo_2}{k}}/(-1)^{\smaller{\hyper_2\symdiff \hypo_2}{i}},\\ (-1)^{\smaller{\hyper\symdiff \hypo}{i}}/(-1)^{\smaller{\hyper\symdiff \hypo}{i^*}}&=(-1)^{\smaller{\hyper_2\symdiff \hypo_2}{k}}/(-1)^{\smaller{\hyper_2\symdiff \hypo_2}{j^*}}.
    \end{align*} 
    The first one clearly holds. The second one also holds, because $\hyper\symdiff \hypo=ii^*j^*k$ and hence \[\smaller{\hyper\symdiff \hypo}{i}+\smaller{\hyper\symdiff \hypo}{i^*}+\smaller{\hyper_2\symdiff \hypo_2}{k}+\smaller{\hyper_2\symdiff \hypo_2}{j^*}=0+1+2+3=6\]
    for any order of $i,i^*,j^*,k$. 
    \item The fourth case is again a $2$-term Grassmann--Pl\"ucker relation, which holds trivially. 
    \item In the fifth case, we have $\hyper=ii^*jj^*Z$ and $\hypo=jkZ$, where $Z$ is a transversal of $\ground\setminus\{i,i^*,j,j^*,k,k^*\}$. The only difference between the fifth case and the third case is that $j^*$ and $k$ are swapped (nothing changes for $j$ or $k^*$). To be precise, we present the initial steps. The Grassmann--Pl\"ucker relation is 
    \begin{align*}
    & (-1)^{\smaller{\hyper\symdiff \hypo}{i}}\varphi(i^*jj^*Z)\varphi(ijkZ)\\
    +&(-1)^{\smaller{\hyper\symdiff \hypo}{i^*}}\varphi(ijj^*Z)\varphi(i^*jkZ)\\
    +&(-1)^{\smaller{\hyper\symdiff \hypo}{j^*}}\varphi(ii^*jZ)\varphi(jj^*kZ)\in N_F,
    \end{align*}
    where $\hyper\symdiff \hypo=ii^*j^*k$. 
    We claim that this relation is the same as \ref{item:rGP2} for $\hyper_2:=ijj^*kZ$ and $\hypo_2:=i^*jZ$. Indeed, $\hyper_2\symdiff \hypo_2=\hyper\symdiff \hypo$ and the second relation is
    \begin{align*}
    & (-1)^{\smaller{\hyper_2\symdiff \hypo_2}{j^*}}\varphi(ijkZ)\varphi(i^*jj^*Z)\\
    +&(-1)^{\smaller{\hyper_2\symdiff \hypo_2}{k}}\varphi(ijj^*Z)\varphi(i^*jkZ)\\
    +&(-1)^{\smaller{\hyper_2\symdiff \hypo_2}{i}}\varphi(jj^*kZ)\varphi(ii^*jZ)\in N_F.
    \end{align*}
    The sign checking is similar to the third case. 
    \item In the last case, we have $\hyper=ii^*jj^*Z$ and $\hypo=kk^*Z$, where $Z$ is a transversal of $\ground\setminus\{i,i^*,j,j^*,k,k^*\}$. The Grassmann--Pl\"ucker relation is 
    \begin{align*}
    & (-1)^{\smaller{W}{i}}\varphi(i^*jj^*Z)\varphi(ikk^*Z)+(-1)^{\smaller{W}{j}}\varphi(ii^*j^*Z)\varphi(jkk^*Z)\\
    +& (-1)^{\smaller{W}{i^*}}\varphi(ijj^*Z)\varphi(i^*kk^*Z)+(-1)^{\smaller{W}{j^*}}\varphi(ii^*jZ)\varphi(j^*kk^*Z)\in N_F,
    \end{align*}
    where $W:=\hyper\symdiff \hypo=ii^*jj^*kk^*$. We shall show that this relation is of the form $X-X+X-X\in N_F$ in three steps (\eqref{eq: W1}, \eqref{eq: W4}, and \eqref{eq: W5}), which finishes the proof.  
    \begin{itemize}
        \item We first show that
        \begin{equation}\label{eq: W1}
        (-1)^{\smaller{W}{i}}\varphi(i^*jj^*Z)\varphi(ikk^*Z)=- (-1)^{\smaller{W}{j}}\varphi(ii^*j^*Z)\varphi(jkk^*Z).
        \end{equation}
        Applying \ref{item:rGP2} to $\hyper=i^*jj^*kZ$ and $T=ik^*Z$, we obtain
        \begin{align*}
        & (-1)^{\smaller{W}{k}}\varphi(i^*jj^*Z)\varphi(ikk^*Z)+(-1)^{\smaller{W}{i^*}}\varphi(jj^*kZ)\varphi(ii^*k^*Z)\\
        +& (-1)^{\smaller{W}{j}}\varphi(i^*j^*kZ)\varphi(ijk^*Z)+(-1)^{\smaller{W}{j^*}}\varphi(i^*jkZ)\varphi(ij^*k^*Z)\in N_F.
        \end{align*}
        The key observation here is that the last two terms $\varphi(i^*j^*kZ)\varphi(ijk^*Z)$ and $\varphi(i^*jkZ)\varphi(ij^*k^*Z)$ vanish
        due to \ref{item:rGP3}.
        Hence we have
        \begin{equation}\label{eq: W2}
        (-1)^{\smaller{W}{k}}\varphi(i^*jj^*Z)\varphi(ikk^*Z)=-(-1)^{\smaller{W}{i^*}}\varphi(jj^*kZ)\varphi(ii^*k^*Z).
        \end{equation}
        By swapping $i$ and $j$ (along with $i^*$ and $j^*$), we get
        \begin{equation}\label{eq: W3} (-1)^{\smaller{W}{k}}\varphi(ii^*j^*Z)\varphi(jkk^*Z)=-(-1)^{\smaller{W}{j^*}}\varphi(ii^*kZ)\varphi(jj^*k^*Z).
        \end{equation}
        Since $\sum_{e\in W}\smaller{W}{e}=0+1+2+3+4+5$ is odd,         \eqref{eq: W2} and \eqref{eq: W3} imply \eqref{eq: W1}. 
        \item By swapping $i$ with $i^*$ and $j$ with $j^*$ in \eqref{eq: W1}, we get
        \begin{equation}\label{eq: W4} (-1)^{\smaller{W}{i^*}}\varphi(ijj^*Z)\varphi(i^*kk^*Z)=-(-1)^{\smaller{W}{j^*}}\varphi(ii^*jZ)\varphi(j^*kk^*Z).
        \end{equation}
        \item Lastly, we show that 
        \begin{equation}\label{eq: W5}
        (-1)^{\smaller{W}{i}}\varphi(i^*jj^*Z)\varphi(ikk^*Z)=(-1)^{\smaller{W}{i^*}}\varphi(ijj^*Z)\varphi(i^*kk^*Z).   \end{equation}
        The order we choose on $\ground$ implies $\smaller{W}{i}-\smaller{W}{i^*}=\pm 1$. Then by Lemma~\ref{lem: rgp4}, 
        \[
        \varphi(i^*jj^*Z)=(-1)^a\varphi(i^*kk^*Z)\text{ and }
        \varphi(ikk^*Z)=(-1)^{b}\varphi(ijj^*Z),
        \]
        where $a-b=\indicator{i^*\in [n]^*}-\indicator{i\in [n]^*}=\pm 1$. Therefore, \eqref{eq: W5} holds. \qedhere
    \end{itemize}
\end{enumerate}
\end{proof}

\begin{rmk}
By the proof of the above proposition, if we only assume that $\varphi$ satisfies \ref{item:rGP2} and \ref{item:rGP3}, then we can still get the desired result for tracts with $x-x+y-y\in N_F$ for any $x,y\in F$. 
\end{rmk}

\begin{cor}\label{cor: envelop}
   Let $F$ be a tract with $1+1-1-1\in N_F$ and $\psi:\cT_n\to F$ be a Wick function. Then there exists a function $\varphi:\cT_n\cup\cA_n\to F$ such that $\varphi(T)=\psi(T)^2$ for all $T\in\cT_n$ and $\varphi$ satisfies all the Grassmann--Pl\"ucker relations on $\ground$ that only involve terms in $\{\varphi(X):X\in\cT_n \cup \cA_n\}$.
\end{cor}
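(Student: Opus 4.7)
The proof will be essentially a one-line combination of the two results just established, and the plan is to make the bookkeeping explicit.

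First I would invoke Proposition~\ref{prop:WtoGP}: starting from the given Wick $F$-function $\psi$, apply the map $\WtoGP$ to obtain a restricted Grassmann--Pl\"ucker $F$-function $\varphi:\cT_n\cup\cA_n\to F$. By the very definition of $\WtoGP$ given in \S\ref{sec:WtoGP}, we have $\varphi(T)=\psi(T)^2$ for every transversal $T\in\cT_n$, while the values on almost-transversals are given by the formula~\eqref{eq:GPA}. This settles the first condition on $\varphi$ automatically, without using the hypothesis $1+1-1-1\in N_F$.

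Next I would apply Proposition~\ref{prop:GP on TA} to this $\varphi$. Since $\varphi$ is a restricted Grassmann--Pl\"ucker function and the tract $F$ satisfies $1+1-1-1\in N_F$, the proposition yields exactly what is required: every Grassmann--Pl\"ucker relation
\[
\sum_{e\in \hyper\setminus \hypo}(-1)^{|\hyper\symdiff \hypo<e|}\varphi(\hyper\setminus\{e\})\varphi(\hypo\cup\{e\})\in N_F
\]
whose summands all involve only values $\varphi(X)$ with $X\in\cT_n\cup\cA_n$ belongs to $N_F$. This completes the proof.

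There is essentially no obstacle here; the work has already been done upstream. The only point worth double-checking is that the six cases in Lemma~\ref{lem:GP on TA} genuinely exhaust all pairs $(\hyper,\hypo)$ with $|\hyper|=n+1$, $|\hypo|=n-1$, and $\hyper\setminus\{e\}, \hypo\cup\{e\}\in\cT_n\cup\cA_n$ for every $e\in\hyper\setminus\hypo$, so that Proposition~\ref{prop:GP on TA} really covers the class of relations appearing in the statement; this is built into its proof. Hence the corollary follows immediately.
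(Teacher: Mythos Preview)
Your proposal is correct and follows essentially the same approach as the paper: define $\varphi$ via $\WtoGP$ (so $\varphi(T)=\psi(T)^2$ on transversals), invoke Proposition~\ref{prop:WtoGP} to know $\varphi$ is a restricted Grassmann--Pl\"ucker function, and then apply Proposition~\ref{prop:GP on TA}. Your remark that the hypothesis $1+1-1-1\in N_F$ is only needed in the second step is also accurate.
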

\begin{proof}
Let $\varphi$ be as defined in Section~\ref{sec:WtoGP}, which implies $\varphi(T)=\psi(T)^2$ for all $T\in\cT_n$. By Proposition~\ref{prop:WtoGP}, $\varphi$ is a restricted Grassmann--Pl\"ucker function. Then by Proposition~\ref{prop:GP on TA}, we get the desired result. 
\end{proof}

Remark that the authors of~\cite{EFLS2024} studied the log-concavity of $\Delta$-matroids and orthogonal matroids by employing a slightly stronger notion of enveloping matroids.

\subsection{Almost-principal minors and Pfaffian positivity}

Boretsky et al.~\cite{BCE2024} showed that the totally positive orthogonal Grassmannian in the sense of Lusztig~\cite{Lusztig1994} can be characterized by the positivity of certain $\binom{n}{2}$ coordinates out of the $\binom{2n}{n}$ Pl\"ucker coordinates. They also found its connection with Pfaffian positivity for skew-symmetric matrices.

In the following proposition, we prove that Pfaffian positivity for a skew-symmetric matrix is equivalent to the positivity of certain almost-principal minors, which also holds for nonnegativity; cf. {\cite[Thm.~1.5]{BCE2024}}.
The proof easily follows from Cayley's Pfaffian identity (Remark~\ref{rmk: Cayley}).

For a square matrix $\mathbf{A}$, an almost-principal submatrix $\mathbf{A}[I\cup\{i\}, I\cup\{j\}]$ is called \emph{top-right} if $i<j$.

\begin{prop}
    Let $\mathbf{A}$ be an $n$-by-$n$ skew-symmetric matrix with real entries.
    Then the following are equivalent:
    \begin{enumerate}
        \item The Pfaffians of even-sized principal submatrices are positive (resp. nonnegative).
        \item The determinants of 
        odd-sized top-right almost-principal submatrices are positive (resp. nonnegative).
    \end{enumerate}
\end{prop}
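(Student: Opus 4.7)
The plan is to reduce everything to a single specialization of Cayley's Pfaffian identity (stated in Remark~\ref{rmk: Cayley}), and then prove $(2)\Rightarrow(1)$ by induction on the size of the principal submatrix. More precisely, fix an even-sized subset $I\cup\{i,j\}$ of $[n]$ with $i<j$ and $i,j\notin I$, and apply Cayley's identity to the skew-symmetric matrix $\mathbf{B} := \mathbf{A}[I\cup\{i,j\},I\cup\{i,j\}]$, which has even size $|I|+2$. Since $i<j$, the sign $(-1)^{\indicator{i>j}}$ is $+1$, and $\mathbf{B}^i_j$ is exactly the top-right almost-principal submatrix $\mathbf{A}[I\cup\{i\},I\cup\{j\}]$ of odd size $|I|+1$, while $\mathbf{B}^{ij}_{ij}=\mathbf{A}_I$. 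Thus Cayley's identity specializes to the key formula
\[
\det\bigl(\mathbf{A}[I\cup\{i\},I\cup\{j\}]\bigr)=\pf(\mathbf{A}_I)\cdot\pf(\mathbf{A}_{I\cup\{i,j\}}).
\]

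The direction $(1)\Rightarrow(2)$ is immediate from the key formula: each odd-sized top-right almost-principal minor is a product of two even-sized principal Pfaffians, and positivity (resp. nonnegativity) is preserved under products.

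For $(2)\Rightarrow(1)$, I would induct on $|J|$ among even-sized subsets $J\subseteq[n]$. The base case $|J|=0$ gives $\pf(\mathbf{A}_\emptyset)=1>0$, and $|J|=2$ reduces to a single entry $a_{ij}$ with $i<j$, which equals the top-right almost-principal minor with $I=\emptyset$. For the step, pick any pair $i<j$ in $J$ and set $I:=J\setminus\{i,j\}$; by the inductive hypothesis applied to $I$ we control $\pf(\mathbf{A}_I)$, and the key formula expresses $\pf(\mathbf{A}_J)$ in terms of $\pf(\mathbf{A}_I)$ and a hypothetically nonnegative/positive almost-principal minor.

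The main obstacle is the nonnegative case: if $\pf(\mathbf{A}_I)=0$ for every choice of $\{i,j\}\subseteq J$, then the formula degenerates to $0=0$ and gives no sign information on $\pf(\mathbf{A}_J)$. I would resolve this by observing that in exactly this degenerate situation, the standard row expansion
\[
\pf(\mathbf{A}_J)=\sum_{k\geq 2}(-1)^{k}\,a_{j_1j_k}\,\pf(\mathbf{A}_{J\setminus\{j_1,j_k\}})
\]
forces $\pf(\mathbf{A}_J)=0$, which is still $\geq 0$. In the complementary case, some $\pf(\mathbf{A}_I)>0$, and then division in the key formula yields $\pf(\mathbf{A}_J)\geq 0$. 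The positive case is cleaner: the inductive hypothesis gives $\pf(\mathbf{A}_I)>0$ for every choice, so the key formula directly implies $\pf(\mathbf{A}_J)>0$ without any case analysis.
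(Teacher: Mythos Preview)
Your proposal is correct and follows essentially the same approach as the paper: both proofs reduce to Cayley's identity, handle $(1)\Rightarrow(2)$ and the positive case of $(2)\Rightarrow(1)$ directly, and for the nonnegative case of $(2)\Rightarrow(1)$ use induction together with the row expansion of the Pfaffian to dispose of the degenerate situation. The only cosmetic difference is that the paper assumes $\pf(\mathbf{A}_J)\ne 0$ and then invokes the row expansion to locate a nonzero sub-Pfaffian, whereas you take the contrapositive route (all sub-Pfaffians zero forces $\pf(\mathbf{A}_J)=0$); these are logically equivalent.
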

\begin{proof}
    Note that the Pfaffian of the $0$-by-$0$ matrix is $1$ by convention.

    The implications of both directions for the positive case are immediate from Cayley's Pfaffian identity. 
    The implication from 1 to 2 for the nonnegative case is also straightforward from Cayley's Pfaffian identity.
    Thus, it only remains to show the converse.

    Suppose $\mathbf{A} = (a_{ij})$ is a skew-symmetric matrix whose odd-sized top-right almost-principal minors are nonnegative.
    We show that $\pf(\mathbf{A}_I) \ge 0$ for all even-sized subsets $I\subseteq [n]$. We proceed by induction on ${|I|}/{2}$. 
    The base case $I=\emptyset$ is trivial, so we assume that $|I|>0$.
    We may assume $\pf(\mathbf{A}_I) \ne 0$.
    Denote $I = \{i_1 < i_2 < \dots < i_{2k}\}$.
    Because of the recursive definition of Pfaffians, we have 
    \[
        \pf(\mathbf{A}_I) = \sum_{j=2}^{2k} (-1)^j a_{i_1 i_j} \pf(\mathbf{A}_{I\setminus\{i_1,i_j\}}).
    \]
    Then $\pf(\mathbf{A}\setminus\{i_1,i_j\})$ is nonzero for some $j$, which is indeed positive by induction hypothesis.
    By Cayley's Pfaffian identity, we conclude that 
    \[
        \pf(\mathbf{A}_{I}) = \frac{\det(\mathbf{A}[I\setminus\{i_j\}, I\setminus\{i_1\}])}{\pf(\mathbf{A}_{I\setminus\{i_1,i_j\}})} > 0.
        \qedhere
    \]
\end{proof}

\section*{Acknowledgments}
Thanks to Matt Baker for helpful discussions.

\printbibliography

@article{BLZ2025,
	author = {Baker, Matthew and Lorscheid, Oliver and Zhang, Tianyi},
	journal = {Comb. Theory},
	number = {1},
	pages = {Paper No. 1, 77},
	title = {Foundations of matroids, {P}art 2: {F}urther theory, examples, and computational methods},
	volume = {5},
	year = {2025}}

@article{Tutte1958,
	author = {Tutte, William T.},
	journal = {Trans. Amer. Math. Soc.},
	pages = {144--174},
	title = {A homotopy theorem for matroids. {I}, {II}},
	volume = {88},
	year = {1958}}

@article{Wenzel1995,
	author = {Wenzel, Walter},
	journal = {J. Combin. Theory Ser. A},
	number = {1},
	pages = {19--59},
	title = {Maurer's homotopy theory for even {$\Delta$}-matroids and related combinatorial geometries},
	volume = {71},
	year = {1995}}

@article{Maurer1973,
	author = {Maurer, Stephen B.},
	journal = {J. Combin. Theory Ser. B},
	pages = {216--240},
	title = {Matroid basis graphs. {I}},
	volume = {14},
	year = {1973}}

@incollection{Lusztig1994,
	author = {Lusztig, George},
	booktitle = {Lie theory and geometry},
	pages = {531--568},
	publisher = {Birkh\"{a}user Boston, Boston, MA},
	series = {Progr. Math.},
	title = {Total positivity in reductive groups},
	volume = {123},
	year = {1994}}

@misc{BCE2024,
	author = {Jonathan Boretsky and Veronica Calvo Cortes and Yassine El Maazouz},
	title = {Totally positive skew-symmetric matrices},
	year = {2024},
	eprint = {2412.17233},
    eprinttype = {arxiv}
}

@article{Rincon2012,
	author = {Rinc\'{o}n, Felipe},
	journal = {J. Combin. Theory Ser. A},
	number = {1},
	pages = {14--32},
	title = {Isotropical linear spaces and valuated {D}elta-matroids},
	volume = {119},
	year = {2012},
	doi = {10.1016/j.jcta.2011.08.001}}

@article{BL2025b,
	author = {Baker, Matthew and Lorscheid, Oliver},
	journal = {Mem. Amer. Math. Soc.},
	number = {1536},
	pages = {v+84},
	title = {Foundations of matroids, {P}art 1: {M}atroids without large uniform minors},
	volume = {305},
	year = {2025}}

@phdthesis{Jin2025,
	author = {Jin, Tong},
	school = {Georgia Institute of Technology},
	title = {Representation theory of orthogonal matroids},
	year = {2025}}

@article{BJ2023,
	author = {Baker, Matthew and Jin, Tong},
	journal = {Algebr. Comb.},
	number = {5},
	pages = {1301--1311},
	title = {Representability of orthogonal matroids over partial fields},
	volume = {6},
	year = {2023},
	doi = {10.5802/alco.301}}

@article{EFLS2024,
	author = {Eur, Christopher and Fink, Alex and Larson, Matt and Spink, Hunter},
	journal = {Proc. Lond. Math. Soc. (3)},
	number = {3},
	pages = {Paper No. e12592, 54},
	title = {Signed permutohedra, delta-matroids, and beyond},
	volume = {128},
	year = {2024},
	doi = {10.1112/plms.12592}}

@book{BGW2003,
	author = {Borovik, Alexandre V. and Gelfand, Israel M. and White, Neil},
	publisher = {Birkh\"{a}user, Boston, MA},
	series = {Progress in Mathematics},
	title = {Coxeter matroids},
	volume = {216},
	year = {2003},
	doi = {10.1007/978-1-4612-2066-4}}

@article{BBGS2000,
	author = {Booth, Richard F. and Borovik, Alexandre V. and Gelfand, Israel M. and Stone, David A.},
	journal = {Ann. Comb.},
	number = {2},
	pages = {171--182},
	title = {Lagrangian matroids and cohomology},
	volume = {4},
	year = {2000},
	doi = {10.1007/s000260050004}}

@misc{BDK2025,
	author = {Baker, Matthew and Ding, Changxin and Kim, Donggyu},
	title = {The Jacobian of a regular orthogonal matroid and torsor structures on spanning quasi-trees of ribbon graphs},
	year = {2025},
	eprint = {2501.08796},
	eprinttype = {arxiv}}

@article{Bouchet1987b,
	author = {Bouchet, Andr\'{e}},
	journal = {Discrete Math.},
	number = {1-2},
	pages = {59--71},
	title = {Maps and {$\triangle$}-matroids},
	volume = {78},
	year = {1989},
	doi = {10.1016/0012-365X(89)90161-1}}

@incollection{Bouchet1988,
	author = {Bouchet, Andr\'{e}},
	booktitle = {Combinatorics ({E}ger, 1987)},
	pages = {167--182},
	publisher = {North-Holland, Amsterdam},
	series = {Colloq. Math. Soc. J\'{a}nos Bolyai},
	title = {Representability of {$\triangle$}-matroids},
	volume = {52},
	year = {1988}}

@article {BB,
    AUTHOR = {Baker, Matthew and Bowler, Nathan},
     TITLE = {Matroids over partial hyperstructures},
   JOURNAL = {Adv. Math.},
  FJOURNAL = {Advances in Mathematics},
    VOLUME = {343},
      YEAR = {2019},
     PAGES = {821--863},
   MRCLASS = {52B40 (05B35 12K99 52C40)},
  MRNUMBER = {3891757},
MRREVIEWER = {Dillon\ Mayhew},
       DOI = {10.1016/j.aim.2018.12.004},
       URL = {https://doi.org/10.1016/j.aim.2018.12.004},
}

@article {Kim,
    AUTHOR = {Kim, Donggyu},
     TITLE = {Baker--{B}owler theory for {L}agrangian {G}rassmannians},
   JOURNAL = {Int. Math. Res. Not. IMRN},
  FJOURNAL = {International Mathematics Research Notices. IMRN},
      YEAR = {2025},
    NUMBER = {8},
     PAGES = {Paper No. rnaf094, 41},
   MRCLASS = {14M15 (05B35 14T15 52B40)},
  MRNUMBER = {4892772},
       DOI = {10.1093/imrn/rnaf094},
       URL = {https://doi.org/10.1093/imrn/rnaf094},
}

@article {JK,
    AUTHOR = {Jin, Tong and Kim, Donggyu},
     TITLE = {Orthogonal matroids over tracts},
   JOURNAL = {Forum Math. Sigma},
  FJOURNAL = {Forum of Mathematics. Sigma},
    VOLUME = {13},
      YEAR = {2025},
     PAGES = {Paper No. e130, 34},
   MRCLASS = {05B35 (15A63 52B40)},
  MRNUMBER = {4941601},
       DOI = {10.1017/fms.2025.10085},
       URL = {https://doi.org/10.1017/fms.2025.10085},
}

@article {BDKS,
    AUTHOR = {Boege, Tobias and D'Al\`i, Alessio and Kahle, Thomas and
              Sturmfels, Bernd},
     TITLE = {The geometry of gaussoids},
   JOURNAL = {Found. Comput. Math.},
  FJOURNAL = {Foundations of Computational Mathematics. The Journal of the
              Society for the Foundations of Computational Mathematics},
    VOLUME = {19},
      YEAR = {2019},
    NUMBER = {4},
     PAGES = {775--812},
   MRCLASS = {15A15 (13P10 14M15 14T05 17B10 60E05 62H20)},
  MRNUMBER = {3989713},
       DOI = {10.1007/s10208-018-9396-x},
       URL = {https://doi.org/10.1007/s10208-018-9396-x},
}

@incollection {Knuth,
    AUTHOR = {Knuth, Donald E.},
     TITLE = {Overlapping {P}faffians},
      NOTE = {The Foata Festschrift},
   JOURNAL = {Electron. J. Combin.},
  FJOURNAL = {Electronic Journal of Combinatorics},
    VOLUME = {3},
      YEAR = {1996},
    NUMBER = {2},
     PAGES = {Research Paper 5, approx. 13},
   MRCLASS = {15A24 (15A15)},
  MRNUMBER = {1392490},
MRREVIEWER = {Mihail\ Voicu},
       DOI = {10.37236/1263},
       URL = {https://doi.org/10.37236/1263},
}

@article{BMP2003,
	author = {Booth, Richard F. and Moreira, Maria Leonor and Pinto, Maria Ros\'{a}rio},
	date-added = {2023-07-22 15:34:09 +0900},
	date-modified = {2024-03-03 19:36:44 +0900},
	doi = {10.1016/S0012-365X(02)00801-4},
	fjournal = {Discrete Mathematics},
	issn = {0012-365X},
	journal = {Discrete Math.},
	keywords = {delta-matroid},
	mrclass = {05B35},
	mrnumber = {1991710},
	note = {The 18th British Combinatorial Conference (Brighton, 2001)},
	number = {1-3},
	pages = {109--118},
	title = {A circuit axiomatisation of {L}agrangian matroids},
	url = {https://doi.org/10.1016/S0012-365X(02)00801-4},
	volume = {266},
	year = {2003},
	bdsk-file-1 = {YnBsaXN0MDDSAQIDBFxyZWxhdGl2ZVBhdGhZYWxpYXNEYXRhXxBkLi4vUGFwZXJzKHJlYWQgb25seSwgb3JkcmVkIGJ5IHRoZSBhdXRob3JzKS9CTVAsIEEgY2lyY3VpdCBheGlvbWF0aXNhdGlvbiBvZiBMYWdyYW5naWFuIG1hdHJvaWRzLnBkZk8RAqgAAAAAAqgAAgAADE1hY2ludG9zaCBIRAAAAAAAAAAAAAAAAAAAAOJgAOhCRAAB/////x9CTVAsIEEgY2lyY3VpdCBheGkjRkZGRkZGRkYucGRmAAAAAAAAAAAAAAAAAAAAAAAAAAAAAAAAAAAAAAAAAAD/////3t99jAAAAAAAAAAAAAEAAgAACiBjdQAAAAAAAAAAAAAAAAAfUGFwZXJzKHJlYWQgb25seSwgb3JkcmVkIGJ5IHQjNgAAAgChLzpVc2Vyczpkb25nZ3l1a2ltOkxpYnJhcnk6TW9iaWxlIERvY3VtZW50czpjb21+YXBwbGV+Q2xvdWREb2NzOlBhcGVycyhyZWFkIG9ubHksIG9yZHJlZCBieSB0aGUgYXV0aG9ycyk6Qk1QLCBBIGNpcmN1aXQgYXhpb21hdGlzYXRpb24gb2YgTGFncmFuZ2lhbiBtYXRyb2lkcy5wZGYAAA4AcgA4AEIATQBQACwAIABBACAAYwBpAHIAYwB1AGkAdAAgAGEAeABpAG8AbQBhAHQAaQBzAGEAdABpAG8AbgAgAG8AZgAgAEwAYQBnAHIAYQBuAGcAaQBhAG4AIABtAGEAdAByAG8AaQBkAHMALgBwAGQAZgAPABoADABNAGEAYwBpAG4AdABvAHMAaAAgAEgARAASAJ9Vc2Vycy9kb25nZ3l1a2ltL0xpYnJhcnkvTW9iaWxlIERvY3VtZW50cy9jb21+YXBwbGV+Q2xvdWREb2NzL1BhcGVycyhyZWFkIG9ubHksIG9yZHJlZCBieSB0aGUgYXV0aG9ycykvQk1QLCBBIGNpcmN1aXQgYXhpb21hdGlzYXRpb24gb2YgTGFncmFuZ2lhbiBtYXRyb2lkcy5wZGYAABMAAS8AABUAAgAR//8AAAAIAA0AGgAkAIsAAAAAAAACAQAAAAAAAAAFAAAAAAAAAAAAAAAAAAADNw==},
	bdsk-file-2 = {YnBsaXN0MDDSAQIDBFxyZWxhdGl2ZVBhdGhZYWxpYXNEYXRhXxBnLi4vUGFwZXJzL21hdHJvaWQvZGVsdGEtbWF0cm9pZC9sYWdyYW5naWFuIG1hdHJvaWRzL0EgY2lyY3VpdCBheGlvbWF0aXNhdGlvbiBvZiBMYWdyYW5naWFuIG1hdHJvaWRzLnBkZk8RApYAAAAAApYAAgAADE1hY2ludG9zaCBIRAAAAAAAAAAAAAAAAAAAAOJgAOhCRAAB/////x9BIGNpcmN1aXQgYXhpb21hdGkjRkZGRkZGRkYucGRmAAAAAAAAAAAAAAAAAAAAAAAAAAAAAAAAAAAAAAAAAAD/////3t99jAAAAAAAAAAAAAEABQAACiBjdQAAAAAAAAAAAAAAAAATbGFncmFuZ2lhbiBtYXRyb2lkcwAAAgCkLzpVc2Vyczpkb25nZ3l1a2ltOkxpYnJhcnk6TW9iaWxlIERvY3VtZW50czpjb21+YXBwbGV+Q2xvdWREb2NzOlBhcGVyczptYXRyb2lkOmRlbHRhLW1hdHJvaWQ6bGFncmFuZ2lhbiBtYXRyb2lkczpBIGNpcmN1aXQgYXhpb21hdGlzYXRpb24gb2YgTGFncmFuZ2lhbiBtYXRyb2lkcy5wZGYADgBoADMAQQAgAGMAaQByAGMAdQBpAHQAIABhAHgAaQBvAG0AYQB0AGkAcwBhAHQAaQBvAG4AIABvAGYAIABMAGEAZwByAGEAbgBnAGkAYQBuACAAbQBhAHQAcgBvAGkAZABzAC4AcABkAGYADwAaAAwATQBhAGMAaQBuAHQAbwBzAGgAIABIAEQAEgCiVXNlcnMvZG9uZ2d5dWtpbS9MaWJyYXJ5L01vYmlsZSBEb2N1bWVudHMvY29tfmFwcGxlfkNsb3VkRG9jcy9QYXBlcnMvbWF0cm9pZC9kZWx0YS1tYXRyb2lkL2xhZ3JhbmdpYW4gbWF0cm9pZHMvQSBjaXJjdWl0IGF4aW9tYXRpc2F0aW9uIG9mIExhZ3JhbmdpYW4gbWF0cm9pZHMucGRmABMAAS8AABUAAgAR//8AAAAIAA0AGgAkAI4AAAAAAAACAQAAAAAAAAAFAAAAAAAAAAAAAAAAAAADKA==},
	bdsk-url-1 = {https://mathscinet.ams.org/mathscinet-getitem?mr=1991710}}


\appendix

\section{\texorpdfstring{$\WtoGP$}{The function WtoGP} in the realizable case}\label{sec: realizable}

In \S\ref{sec:WtoGP}, we construct the GP function $\varphi$ from a Wick function $\psi$, and define $\WtoGP([\psi]):=[\varphi]$. The formulas involved in the construction are complicated, especially the sign in \eqref{eq:GPA}. In this appendix, we justify our construction by showing that it is correct in the realizable case.

Let $\mathbf{A}$ be an $n$-by-$n$ skew-symmetric matrix over a field $K$.
Then, the corresponding Wick $K$-function $\psi: \cT_n \to K$ is defined by
\[    \psi(([n]\setminus I) \cup I^*) := \pf(\mathbf{A}[I,I])
\]
for $I\subseteq[n]$, where $\mathbf{A}[R,C]$ denotes the submatrix with rows indexed by $R$ and columns indexed by $C$.

Let $\mathbf{A}'$ be the $n$-by-$2n$ matrix with columns indexed by $1,1^*,2,2^*,\dots,n,n^*$ in order such that the square submatrix with columns $1,\dots,n$ is the identity matrix and the square submatrix with columns $1^*,\dots,n^*$ is $\mathbf{A}$.
We consider the restricted GP $K$-function $\varphi: \cT_n \cup \cA_n \to K$ defined by
\[
    \varphi(B) := \det(\mathbf{A}'[B])
\]
for $B\in \cT_n \cup \cA_n$, where $\mathbf{A}'[B]$ denotes the square submatrix with columns indexed by $B$.
Note that $\sigma = 0$ in this case, due to $\varphi([n]) = 1 \ne 0$. 

Our target is to show that this $\varphi$ is exactly the one we defined in \S\ref{sec:WtoGP}. 

By definition, we have 
\begin{align*}
    \varphi(([n]\setminus I) \cup I^*) 
    = \det(\mathbf{A}'[([n]\setminus I) \cup I^*]) &= \det(\mathbf{A}[I,I]) \\
    &= \pf(\mathbf{A}[I,I])^2 = \psi(([n]\setminus I) \cup I^*)^2. 
\end{align*}
for $I\subseteq[n]$.
The second last equality is the first part of Cayley's Pfaffian identity (Remark~\ref{rmk: Cayley}).

It remains to show the almost-transversal case \eqref{eq:GPA}. We will use the second part of Cayley's Pfaffian identity:
\[
    \det(\mathbf{A}[Ii, Ij])
    =
    \begin{cases}
        (-1)^{\indicator{i>j}} \pf(\mathbf{A}[I,I]) \pf(\mathbf{A}[Iij, Iij]) & \text{$|I|$ is even}, \\
        \pf(\mathbf{A}[Ii,Ii]) \pf(\mathbf{A}[Ij,Ij]) & \text{$|I|$ is odd},
    \end{cases}
\]
for $I\subseteq[n]$ and distinct $i,j\in[n]\setminus I$, where we denote $I\cup\{i\}$, $I\cup\{j\}$, and $I\cup\{i,j\}$ by $Ii$, $Ij$, and $Iij$ for shorthand.
Note that 
\begin{align*}
    \varphi(([n]\setminus Ii) \cup (Ij)^*)
    &= (-1)^m \det(\mathbf{A}[Ii, Ij]),
\end{align*}
where
\[
    m= |[n]\setminus I > i| + |[n]\setminus I > j|  + \indicator{i>j}.
\]
Here, $|X>i|$ denotes the number of elements $x\in X$ with $x>i$. The number $m$ comes from the cofactor expansion along the columns indexed by $[n]\setminus Ii$.
Then, one can derive \eqref{eq:GPA} by computing the signs as follows.

Suppose that $|I|$ is even. Then,
\[
    \det(\mathbf{A}[Ii, Ij]) =
    (-1)^{\indicator{i>j}} \pf(\mathbf{A}[I,I]) \pf(\mathbf{A}[Iij, Iij]).
\]
Therefore, we have 
\begin{align}
    \varphi(([n]\setminus Ii ) \cup (Ij)^*)
    &= 
    (-1)^m \det(\mathbf{A}[Ii, Ij]) \notag \\
    &=
    (-1)^{m+ \indicator{i>j}} \pf(\mathbf{A}[I, I]) \pf(\mathbf{A}[Iij, Iij]) \notag \\
    &=
    (-1)^{m+ \indicator{i>j}} \psi(([n]\setminus I) \cup I^*) \psi(([n]\setminus Iij) \cup (Iij)^*)
    \label{eq: realizable1}
\end{align}
Let $B := ([n]\setminus I) \cup I^* \in \cT_n^0$.
Then $\indicator{i\in B} = 1$ and, for each $k\in[n]$
\begin{align*}
    \smaller{B\cap[n]}{k} 
    = |[n]\setminus I < k|
    &= |[n] < k| - |I < k| \\
    &= (n-1) - |[n] > k| + |I| - |I > k| \\
    &= (n-1) + |I| - |[n]\setminus I > k|.
\end{align*}
Hence,
\begin{align*}
    \smaller{B\cap[n]}{i} + \smaller{B\cap[n]}{j} + \indicator{i\in B} +1
    \equiv 
    |[n]\setminus I > i| + |[n]\setminus I > j| \pmod{2},
\end{align*}
showing that \eqref{eq: realizable1} coincides with \eqref{eq:GPA}.

Suppose that $|I|$ is odd. Then,
\[
    \det(\mathbf{A}[Ii, Ij]) = \pf(\mathbf{A}[Ii,Ii]) \pf(\mathbf{A}[Ij,Ij]),
\]
and hence 
\begin{align}
    \varphi(([n]\setminus Ii ) \cup (Ij)^*)
    &= 
    (-1)^m \det(\mathbf{A}[Ii, Ij]) \notag \\
    &=
    (-1)^{m} \pf(\mathbf{A}[Ii, Ii]) \pf(\mathbf{A}[Ij, Ij]) \notag \\
    &=
    (-1)^{m} \psi(([n]\setminus Ii) \cup (Ii)^*) \psi(([n]\setminus Ij) \cup (Ij)^*)
    \label{eq: realizable2}
\end{align}
Let $B := ([n]\setminus Ii) \cup (Ii)^* \in \cT_n^0$. Then $\indicator{i\in B} = 0$ and
\begin{align*}
    \smaller{B\cap[n]}{k} 
    = |[n]\setminus Ii < k|
    &= (n-1) + |Ii| - |[n]\setminus Ii > k| \\
    &= (n-1) + |Ii| - |[n]\setminus I > k| - \indicator{i>k}
\end{align*}
for $k\in[n]$. Hence, 
\begin{align*}
    \smaller{B\cap[n]}{i} + \smaller{B\cap[n]}{j} + \indicator{i\in B} +1
    &\equiv 
    |[n]\setminus I > i| + |[n]\setminus I > j| + \indicator{i>j} \pmod{2}.
\end{align*}
This shows that \eqref{eq: realizable2} coincides with \eqref{eq:GPA}.

\end{document}